\newcommand{\NN}{\mathbb{N}}
\newcommand{\ZZ}{\mathbb{Z}}
\newcommand{\QQ}{\mathbb{Q}}
\newcommand{\RR}{\mathbb{R}}
\DeclareMathOperator{\dist}{dist}
\newcommand{\floor}[1]{\left\lfloor #1 \right\rfloor}
\newcommand{\cc}{\mathbf{c}}
\newcommand{\ee}{\mathbf{e}}
\newcommand{\oo}{\mathbf{0}}
\newcommand{\pp}{\mathbf{p}}
\newcommand{\qq}{\mathbf{q}}
\newcommand{\uu}{\mathbf{u}}
\newcommand{\vv}{\mathbf{v}}
\newcommand{\xx}{\mathbf{x}}
\newcommand{\yy}{\mathbf{y}}
\newcommand{\vol}{\text{vol}}
\newcommand{\abs}[1]{{\left\lvert{#1}\right\rvert}}
\begin{document}

\title[Covering radii and the shifted Lonely Runner Conjecture]{Covering radii of $3$-zonotopes and the shifted Lonely Runner Conjecture
}
    \thanks{%
        Supported by grants PID2022-137283NB-C21 and PRE2022-000020, %
        funded by MCIN/AEI/10.13039/501100011033.%
    }%

\author[D.~Alc\'antara]{David Alc\'antara}
\author[F.~Criado]{Francisco Criado}
\author[F.~Santos]{Francisco Santos}

\address[D.~Alc\'antara, F.~Santos]{
    Departmento de Matem\'aticas, Estad\'istica y Computaci\'on\\
    Universidad de Cantabria\\
         Santander\\
         Spain}
\email{david.alcantara@unican.es,
francisco.santos@unican.es}

\address[F.~Criado]{Departamento de Matem\'aticas \\
    CUNEF Universidad\\
        Madrid \\
        Spain}
\email{francisco.criado@cunef.edu}

\maketitle

\begin{abstract}
    We show that the shifted Lonely Runner Conjecture (sLRC) holds for 5 runners.
    We also determine that there are exactly 3 primitive tight instances of the conjecture,
    only two of which are tight for the non-shifted conjecture (LRC).
    Our proof is computational, relying on a rephrasing of the sLRC in terms of covering radii of certain zonotopes (Henze and Malikiosis, 2017),
    and on an upper bound for the (integer) velocities to be checked (Malikiosis, Santos and Schymura, 2024+).

    As a tool for the proof, we devise an algorithm for bounding the covering radius of rational lattice polytopes, based on constructing dyadic fundamental domains.
\end{abstract}

\setcounter{tocdepth}{2}
\tableofcontents

\section{Introduction}\label{sec:introduction}
The lonely runner (LR) conjecture states that if $n+1$ runners run along a circle of
length one with constant, distinct, velocities, all starting at the origin, then for
every runner there is a time at which all other runners are at distance at least
$1/(n+1)$ from it.
The conjecture was posed in 1968 by J.~Wills~\cite{willslrc} in the language of
Diophantine approximation, and is currently proved up to $n=6$~\cite{7lrc}.
The conjecture has attracted some attention due to the simplicity of its statement and also because it admits several interpretations, from its original Diophantine approximation statement, to visibility obstruction, billiard trajectories or nowhere zero flows in graphs, among others.
See~\cite{perarnauserra2024thelonely} for a very recent survey.

We are interested in the so-called \emph{shifted} version of the conjecture,
a generalization in which each runner is allowed to have a different starting point. 
This version appeared in print for the first time in 2019~\cite{sLRC}, 
although the authors attribute it to Wills as well.
Both in the original version and in the shifted one, the runner we are looking at can be fixed at the origin, since only relative velocities are important.
Doing this, the shifted conjecture becomes the following:

\begin{conjecture}[Shifted Lonely Runner Conjecture (sLRC)]
\label{conj:slrc}
    Let $v_1, \dots, v_n$, $s_1, \dots, s_n \in \RR$ be real numbers, with the $v_i$ distinct and non-zero.
    % \footnote{In the original lonely runner conjecture dropping the condition that the $v_i$ be distinct is no loss of generality once we have fixed a runner with no speed, but the shifted version is false without it, even after fixing a runner.}
    Then, there is a $t \in \RR$ such that for every $i \in [n]$ one has $\dist(v_i t + s_i, \ZZ) \ge \frac{1}{n+1}$.
\end{conjecture}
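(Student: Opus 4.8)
The plan is to run the paper's geometric machinery in a way that is uniform in $n$, which isolates the single ingredient that goes beyond the fixed-$n$ computations. \emph{Step 1 (reduction to integer velocities).} For fixed $(v,s)$ the set of times $t$ failing the bound for some index is a finite union of open arcs of $\RR/\ZZ$, so whether a good $t$ exists behaves well under limits; together with the scale-invariance $(t,v)\mapsto(\lambda t,\lambda^{-1}v)$ and clearing denominators, a standard compactness argument reduces Conjecture~\ref{conj:slrc} to the case $v\in\ZZ^n$ with pairwise distinct, non-zero entries — the shifts $s\in\RR^n$ must still be allowed to be arbitrary, which is precisely what distinguishes the shifted conjecture from the original one.

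\emph{Step 2 (covering-radius reformulation).} By the theorem of Henze and Malikiosis, for a fixed admissible $v\in\ZZ^n$ the conclusion of the sLRC holds for \emph{every} shift $s$ if and only if a certain $(n-1)$-dimensional rational zonotope $Z_v$ — essentially the projection of the cube $[\tfrac1{n+1},\tfrac{n}{n+1}]^n$ along the direction $v$, read in the lattice $\Lambda_v$ that $\ZZ^n$ induces on $\RR^n/\RR v$ — covers the torus $\RR^{n-1}/\Lambda_v$; equivalently, the covering radius $\mu(Z_v,\Lambda_v)$ does not exceed the explicit threshold furnished by that reformulation. (The non-shifted LRC asks only that the class of the origin be covered, which is why the covering-radius language is the natural home of the shifted version.) Thus Conjecture~\ref{conj:slrc} is equivalent to the assertion that for every admissible $v\in\ZZ^n$ one has $\mu(Z_v,\Lambda_v)$ below the threshold.

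\emph{Step 3 (finitely many velocities per $n$).} The upper bound of Malikiosis, Santos and Schymura guarantees that the inequality of Step~2 need only be verified for velocity vectors with $\max_i|v_i|$ at most an explicit $B(n)$. For each fixed $n$ this leaves finitely many zonotopes, and each can be certified — or a counterexample exhibited — by the dyadic-fundamental-domain algorithm of this paper; this is exactly the route carried out here for $n\le 4$. To close the general conjecture it therefore suffices to complement this finite check with an argument handling all large $n$.

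\emph{Step 4 (the uniform estimate — the main obstacle).} Obtaining all $n$ simultaneously requires showing that $\mu(Z_v,\Lambda_v)$ stays below the threshold with a quantified margin for every admissible $v$ once $n$ is large, leaving only small $n$ to the computer. The natural mechanism is an induction on $n$ that deletes one generator of $Z_v$, estimating $\mu(Z_v)$ in terms of $\mu(Z_{v'})$ with $v'=(v_1,\dots,v_{n-1})$ plus the contribution of the removed segment, and arguing that each deletion gains a definite amount except along a short list of near-tight families — which, consistently with the finite count of tight instances recorded in the abstract, one then disposes of by hand. The difficulty, and the reason this is the crux, is that covering radii of zonotopes are neither monotone nor additive under such a deletion, so the induction must be engineered to lose a controlled positive quantity rather than possibly nothing; this is precisely where the sharpness of the conjecture forbids soft arguments, and a genuinely new, dimension-robust quantitative bound on the covering radii of these zonotopes would be needed.
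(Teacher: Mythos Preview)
The statement you are attempting to prove is Conjecture~\ref{conj:slrc}, the shifted Lonely Runner Conjecture for arbitrary~$n$. The paper does \emph{not} prove this conjecture; it establishes only the case $n=4$ (Corollary~\ref{coro:main}), via the computational verification of Theorem~\ref{thm:main} combined with the $n=4$ bound of Theorem~\ref{thm:thm_e}. There is therefore no ``paper's own proof'' to compare against.

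Your proposal is not a proof either, and you effectively say so yourself. Steps~1--2 are the standard reductions (integer velocities, then the zonotopal/covering-radius reformulation of Proposition~\ref{prop:zono_sLRZ}), and these are fine as background. Step~3, however, overstates what is available: the Malikiosis--Santos--Schymura bound you invoke is, as cited in this paper (Theorem~\ref{thm:thm_e}), a result for $n=4$ specifically, not a uniform reduction to finitely many velocities for every~$n$. Even granting such a bound $B(n)$ for each~$n$, you would still have infinitely many finite checks, one per~$n$, which is not a proof.

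The decisive gap is your Step~4. You correctly identify that closing the conjecture would require a dimension-robust quantitative inequality $\mu(Z_v)\le\tfrac{n-1}{n+1}$ with a margin for all large~$n$, and you sketch an inductive mechanism (delete a generator, control the loss). But you then concede that covering radii are neither monotone nor additive under this operation and that ``a genuinely new \ldots\ bound \ldots\ would be needed''. That sentence is an admission that the argument is incomplete: the one step that would go beyond the existing literature is exactly the step you do not supply. As written, this is a plausible research plan, not a proof.
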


Since the order of the runners is not relevant, we assume without loss of generality that
$v_1 < \ldots < v_n$.\label{above}

This shifted version of the Lonely Runner Conjecture is only currently proved up to
four runners~\cite{Cslovjecsek2022CovRadiusPolytope, Rifford2021sLRCFor3And4Runners}.
In this paper, we prove it for five runners.
Note that we consider this to be the case $n = 4$, while Rifford~\cite{Rifford2021sLRCFor3And4Runners}
refers to the case of four runners as $n = 4$, so what he proves is in fact our $n=3$.

For our proof, we use that in Conjecture~\ref{conj:slrc} (and in the original LR conjecture) there is no loss of
generality in assuming all velocities $v_i$ to be positive integers~\cite[Section~4.1]{Cslovjecsek2022CovRadiusPolytope}.
We then rely on the following result of Malikiosis, Santos and Schymura:~\begin{theorem}[\protect{\cite[Corollary 1.15]{Malikiosis2024LinExpCheckingLRC}}]
    \label{thm:thm_e}
Conjecture~\ref{conj:slrc} holds for $n = 4$ for all $v_1,v_2,v_3, v_4\in \ZZ_{>}0$
and $s_1,s_2,s_3,s_4 \in \RR$ satisfying with $\sum v_i \ge 196$.
\end{theorem}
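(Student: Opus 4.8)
We sketch the route we would take to Theorem~\ref{thm:thm_e}; the hard part, and the reason the explicit bound cannot be taken much smaller by this argument, will be visible at the end.

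\emph{Reduction to a covering‑radius inequality.} We may assume $\gcd(v_1,\dots,v_n)=1$. By the reformulation of Henze and Malikiosis, the sLRC holds for the velocity vector $\uu=(v_1,\dots,v_n)$ and \emph{every} choice of shifts if and only if $\mu(Z_\uu,\Lambda_\uu)\le\frac{n-1}{n+1}$, where $p\colon\RR^n\to\uu^\perp$ is the orthogonal projection, $Z_\uu:=p([-\tfrac12,\tfrac12]^n)$ is the resulting $(n-1)$‑dimensional zonotope (with generators $p(\ee_i)=\ee_i-\tfrac{v_i}{|\uu|^2}\uu$), $\Lambda_\uu:=p(\ZZ^n)\cong\ZZ^n/\ZZ\uu$, and $\mu$ denotes the covering radius (up to the normalization used in the cited paper). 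For $n=4$ this is exactly an inequality about covering radii of $3$‑zonotopes, and the goal is to show $\mu(Z_\uu,\Lambda_\uu)\le\tfrac35$ whenever $\sum_i v_i\ge 196$. Two elementary facts set the stage: $\det\Lambda_\uu=1/|\uu|$ and $\vol(Z_\uu)=(\sum_i v_i)/|\uu|$, so the volume ratio equals $\sum_i v_i$; moreover $Z_\uu$ contains a Euclidean ball of radius $\tfrac12$ (it is the projection of one), while $|\uu|\ge\tfrac1{\sqrt n}\sum_i v_i$ by Cauchy--Schwarz. Hence when $\sum_i v_i$ is large the lattice $\Lambda_\uu$ is very dense and $Z_\uu$ is of bounded size, which makes a small covering radius plausible; the only obstruction is that $\Lambda_\uu$ be ``thin'', i.e.\ have a direction in which it is sparse.

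\emph{Dichotomy on short relations.} Fix a threshold $r_0=r_0(n)$ and split according to whether $\uu$ carries a \emph{short linear relation}: a nonzero $\mathbf a\in\uu^\perp\cap\ZZ^n$ with all $|a_i|<r_0$ (equivalently, whether $\Lambda_\uu$ fails to be reasonably well‑rounded). In the generic case --- no short relation --- we show that the successive minima $\lambda_1(\Lambda_\uu),\dots,\lambda_{n-1}(\Lambda_\uu)$ are all comparable, hence each of order $|\uu|^{-1/(n-1)}\to 0$ as $\sum_i v_i\to\infty$; feeding this into a covering‑radius estimate --- crudely $\mu(Z_\uu,\Lambda_\uu)\le 2\,\mu(B,\Lambda_\uu)\le\sum_{i}\lambda_i(\Lambda_\uu)$ for the unit Euclidean ball $B$, but in fact a sharper bound is needed --- yields $\mu(Z_\uu,\Lambda_\uu)\le\tfrac35$ once $\sum_i v_i$ passes an explicit threshold. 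Equivalently we can bypass geometry of numbers and directly lower‑bound the measure of the set of good times through the exponential‑sum expansion $\sum_{\mathbf a\cdot\uu=0}\prod_i\widehat f(a_i)\,e^{2\pi\mathrm{i}a_is_i}$, where $f=\mathbf 1_{[1/(n+1),\,n/(n+1)]}$ on $\RR/\ZZ$ has $\widehat f(0)=\tfrac{n-1}{n+1}$ and $|\widehat f(k)|\le\tfrac1{\pi|k|}$: absence of short relations forces the terms with $\mathbf a\ne\mathbf 0$ to sum to less in absolute value than the main term $\bigl(\tfrac{n-1}{n+1}\bigr)^n$, so the good set is nonempty.

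\emph{The degenerate case, and the obstacle.} If $\uu$ does carry a short relation $\mathbf a$, then for any shifts the trajectory of the $n$ runners is confined to the translate of a proper subtorus, and the standard reduction for velocities obeying a linear relation with bounded coefficients (merging or deleting runners) turns the $n=4$ instance into an sLRC‑type statement for at most three velocities, which is already known~\cite{Cslovjecsek2022CovRadiusPolytope, Rifford2021sLRCFor3And4Runners}; since there are only finitely many relations $\mathbf a$ with $|a_i|<r_0$, this is finite bookkeeping. The genuine difficulties are two. First, making this reduction watertight: the reduced instance is a modular/weighted variant of the sLRC, not a literal one, so the covering‑radius inequality must be transferred through a change of both dimension and lattice with exactly the constant $\tfrac{n-1}{n+1}$ --- this is where most of the case analysis lives. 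Second, in the generic case, sharpening the covering‑radius estimate enough that the numerical threshold comes out as small as $196$; the crude bound $\sum_i\lambda_i(\Lambda_\uu)\le\tfrac35$ only barely fails for $\sum_i v_i$ near this size, so the real content is in the fine estimates. Once Theorem~\ref{thm:thm_e} is in place, the sLRC for $n=4$ is reduced to the finitely many velocity vectors with $\sum_i v_i<196$ --- precisely the finite computation performed in the present paper via dyadic fundamental domains.
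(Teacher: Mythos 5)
The paper does not contain a proof of Theorem~\ref{thm:thm_e}: it is imported verbatim from \cite[Corollary 1.15]{Malikiosis2024LinExpCheckingLRC} and used as a black box, so there is no in-paper argument against which to compare your proposal.

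Read on its own terms, your sketch gets the setup right: the equivalence of the sLRC for $(v_1,\dots,v_4)$ with the covering-radius inequality $\mu\le\tfrac35$ is Proposition~\ref{prop:zono_sLRZ} (your formulation via $p([-\tfrac12,\tfrac12]^n)$ and $p(\ZZ^n)$ is unimodularly equivalent to the paper's integer-generator zonotope with lattice $\ZZ^{n-1}$), and the auxiliary facts $\det\Lambda_\uu = 1/\abs{\uu}$, $\vol Z_\uu = (\sum_i v_i)/\abs{\uu}$, and the inscribed ball of radius $\tfrac12$ are all correct. The dichotomy on short integer relations and the Fourier-analytic alternative are plausible skeletons. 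But, as you yourself flag, neither leg is actually carried out, and the gaps are exactly where the theorem lives. In the generic branch you observe that the crude successive-minima bound $\mu\le\sum_i\lambda_i$ ``only barely fails'' near $\sum v_i=196$ and defer the sharpening --- that sharpening is the quantitative content of the result, not a detail. The exponential-sum variant is stated without any control of the tail: the sum $\sum_{\mathbf a\ne\mathbf 0,\ \mathbf a\cdot\vv=0}\prod_i\abs{\widehat f(a_i)}$ does not converge term-by-term under the bound $\abs{\widehat f(k)}\le \tfrac1{\pi\abs{k}}$, so cancellation or a smoothed test function is needed and you do not supply it. In the degenerate branch you concede the merged instance is ``a modular/weighted variant of the sLRC, not a literal one,'' and you do not show the exact constant $\tfrac35$ survives the reduction --- which is delicate, since the sLRC constant $\tfrac{n-1}{n+1}$ strictly decreases with $n$, so naively passing to fewer runners makes the target harder, not easier. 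In short, this is a reasonable road map toward the external theorem, but with the decisive estimates absent it cannot be checked for correctness and does not constitute a proof.
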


That is, only the velocity vectors $(v_1, v_2, v_3, v_4) \in \ZZ^4$ with
$1 \le v_1 < v_2 < v_3 < v_4$ and $v_1 + v_2 + v_3 + v_4 \le 195$ need to be checked.
It is also obvious that dividing all velocities by a common factor $c$ does not change the problem, since the positions at time $t$ of the original problem coincide with the positions at time $ct$ of the new one.
With these considerations, our main result is:

\begin{theorem}
    \label{thm:main}
    There are 2\,133\,561 integer velocity vectors $(v_1, v_2, v_3, v_4) \in \ZZ^4$ with $1 \le v_1 < v_2 < v_3 < v_4$, $v_1 + v_2 + v_3 + v_4 \le 195$ and $\gcd(v_1, v_2, v_3, v_4) = 1$.
    Conjecture~\ref{conj:slrc} holds (for every choice of the $s_i$) for all of them.
\end{theorem}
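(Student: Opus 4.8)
\noindent\emph{Sketch of the proof strategy.}
The proof is a large but finite computation, and the plan is as follows. By \cite[Section~4.1]{Cslovjecsek2022CovRadiusPolytope} there is no loss of generality in taking the velocities to be positive integers, and by Theorem~\ref{thm:thm_e} together with the scaling invariance $\mathbf{v}\mapsto\mathbf{v}/\gcd(\mathbf{v})$ it suffices to verify Conjecture~\ref{conj:slrc}, for every choice of the $s_i$, for each of the $2\,133\,561$ primitive vectors $\mathbf{v}=(v_1,v_2,v_3,v_4)$ with $1\le v_1<v_2<v_3<v_4$ and $\sum v_i\le 195$. Fix such a $\mathbf{v}$. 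The condition $\dist(v_it+s_i,\ZZ)\ge\tfrac15$ for all $i$ says exactly that the point $\mathbf{s}+t\mathbf{v}$ lies in the periodic ``good set'' $G_4+\ZZ^4$, where $G_4=[\tfrac15,\tfrac45]^4=\tfrac35\sum_{i=1}^4[\mathbf{0},\mathbf{e}_i]+\tfrac15(\mathbf{1})$; hence sLRC holds for $\mathbf{v}$ and all shifts if and only if every line of direction $\mathbf{v}$ in $\RR^4$ meets $G_4+\ZZ^4$, equivalently $\pi_{\mathbf{v}}(G_4)+\pi_{\mathbf{v}}(\ZZ^4)=\RR^3$, where $\pi_{\mathbf{v}}\colon\RR^4\to\RR^4/\RR\mathbf{v}\cong\RR^3$ is the projection with kernel $\RR\mathbf{v}$. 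Since $\mathbf{v}$ is primitive, $\Lambda_{\mathbf{v}}:=\pi_{\mathbf{v}}(\ZZ^4)$ is a full rank lattice in $\RR^3$, and $Z_{\mathbf{v}}:=\pi_{\mathbf{v}}(G_4)=\tfrac35\sum_{i=1}^4[\mathbf{0},\pi_{\mathbf{v}}(\mathbf{e}_i)]+\pi_{\mathbf{v}}(\tfrac15\mathbf{1})$ is a rational $3$-dimensional zonotope with four generators. This is the Henze--Malikiosis reformulation: sLRC holds for $\mathbf{v}$ (for all shifts) if and only if the covering radius satisfies $\mu(Z_{\mathbf{v}},\Lambda_{\mathbf{v}})\le 1$, equivalently the unit-generator zonotope $\tfrac53 Z_{\mathbf{v}}$ has covering radius at most the threshold $\tfrac{n-1}{n+1}=\tfrac35$. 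Note that $G_4$ is closed, so this is a non-strict inequality and equality is allowed. Thus the task is reduced to certifying a covering-radius bound for roughly two million explicit rational $3$-zonotopes.

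The certification is carried out by the dyadic-fundamental-domain algorithm. To prove $\mu(Z_{\mathbf{v}},\Lambda_{\mathbf{v}})\le 1$ one must show that $Z_{\mathbf{v}}$ together with its $\Lambda_{\mathbf{v}}$-translates covers $\RR^3$; by periodicity it suffices to cover a single fundamental domain. The algorithm starts from the half-open parallelepiped spanned by a (reduced) basis of $\Lambda_{\mathbf{v}}$, recursively bisects boxes along coordinate hyperplanes into dyadic sub-boxes, and for each sub-box $B$ searches a bounded window of lattice vectors $\mathbf{t}\in\Lambda_{\mathbf{v}}$ for one with $B\subseteq Z_{\mathbf{v}}+\mathbf{t}$ --- a containment decided exactly by testing the eight vertices of $B$ against the facet inequalities of $Z_{\mathbf{v}}+\mathbf{t}$. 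Covered boxes are discarded; uncovered boxes are bisected further. By compactness the recursion terminates with a finite covering certificate precisely when $\mu(Z_{\mathbf{v}},\Lambda_{\mathbf{v}})<1$; when $\mu(Z_{\mathbf{v}},\Lambda_{\mathbf{v}})\ge 1$ the surviving boxes persist and localise the deep holes of $Z_{\mathbf{v}}$ to arbitrary precision, which are then treated separately. All arithmetic is performed over $\QQ$, so each successful run is a rigorous proof for that $\mathbf{v}$.

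Running this over the $2\,133\,561$ vectors shows that every one of them satisfies $\mu(Z_{\mathbf{v}},\Lambda_{\mathbf{v}})\le 1$, which is Theorem~\ref{thm:main}. For the handful of vectors on which the algorithm does not certify a strict inequality, one locates the candidate deep holes it isolates and verifies by exact rational computation that $\mu(Z_{\mathbf{v}},\Lambda_{\mathbf{v}})$ equals the threshold exactly; for these the conjecture holds with equality, and they are the \emph{tight} instances. Counting the primitive tight vectors, and observing that a tight instance is tight for the non-shifted Lonely Runner Conjecture exactly when its extremal shift may be taken to be $\mathbf{s}=\mathbf{0}$, yields the claims of the abstract, two of the three tight $\mathbf{v}$ being the classical extremal configurations for five runners.

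The main obstacle is computational rather than conceptual: performing some two million exact covering-radius certifications in dimension three within a feasible time budget. This requires (i) good heuristics for choosing the candidate translates $\mathbf{t}$ so that the overwhelming majority of boxes are disposed of at shallow recursion depth, (ii) aggressive pruning, reuse of work across lattice-equivalent zonotopes $Z_{\mathbf{v}}$, and parallelisation, and (iii) an a priori bound on the bisection depth, so that failure to terminate genuinely witnesses $\mu(Z_{\mathbf{v}},\Lambda_{\mathbf{v}})\ge 1$. The one delicate mathematical point is the boundary behaviour: distinguishing $\mu(Z_{\mathbf{v}},\Lambda_{\mathbf{v}})=1$, in which case sLRC still holds and the instance is tight, from $\mu(Z_{\mathbf{v}},\Lambda_{\mathbf{v}})>1$, which would refute the conjecture for five runners --- a distinction that is only safe in exact arithmetic and with careful tracking of the closed nature of the good region.
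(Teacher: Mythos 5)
Your overall strategy matches the paper's: reduce to finitely many primitive velocity vectors via Theorem~\ref{thm:thm_e}, pass to the covering-radius reformulation of the sLRC in terms of $3$-zonotopes, and certify the bound $\mu \le \tfrac35$ by constructing dyadic fundamental domains. Your projection $\pi_{\vv}\colon\RR^4\to\RR^4/\RR\vv$ description of the zonotope is an equivalent presentation of the paper's sLR zonotope with volume vector $\vv$, and you correctly recognize that the inequality is non-strict and that the boundary case (tight instances) is where the difficulty lies.

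The genuine gap is the termination mechanism for the tight instances. You write that ``by compactness the recursion terminates with a finite covering certificate precisely when $\mu<1$,'' and propose to handle the non-terminating cases by stopping the search and running a separate exact covering-radius computation. Two problems. First, a non-terminating search cannot by itself be distinguished in finite time from a slowly-terminating one, and your item (iii) --- ``an a priori bound on the bisection depth, so that failure to terminate genuinely witnesses $\mu\ge 1$'' --- is not justified: no such bound on the \emph{depth} exists in general, because for $\mu=1$ exactly the last covered point need not be a dyadic point, so uncovered voxels of every depth survive around it. Second, even if you identify a suspiciously stubborn instance, ``verify by exact rational computation that $\mu$ equals the threshold exactly'' is appealing to a heavier black-box subroutine (exact covering-radius computation) which you do not supply. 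The paper avoids both difficulties with a cleaner device: Corollary~\ref{cor:cov_radius_size_bound} gives an explicit bound $D$ for the \emph{denominator} of $\mu(P)$ in terms of the facet data, and Proposition~\ref{prop:cov_radius_safe_interval} then shows that $\mu(P)\le\rho$ is equivalent to $\mu(P)<\rho+\tfrac1{sD}$. Algorithm~\ref{alg:decide_covering_radius} therefore runs the dyadic search not inside $\rho P$ but inside the slightly inflated $P^+=(\rho+\tfrac1{2sD})P$. This guarantees that $\mu(P^+)<1$ strictly whenever $\mu(P)\le\rho$, so the search terminates even on the three tight instances, and symmetrically $\mu(P^+)>1$ strictly whenever $\mu(P)>\rho$, so a witness dyadic point is found in finite time. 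Adding this denominator bound and the inflate-by-half-the-slack trick to your plan would close the gap.

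A smaller but practically essential point your sketch omits is the LLL preconditioning of the generators (step~3 of Algorithm~\ref{alg:slrz_from_vv}). Without it the zonotopes are too skewed for the dyadic search to be feasible at scale: the paper reports that with LLL over $99\%$ of the two million instances certify at depth~$0$, versus under $30\%$ without.
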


\begin{corollary}
    \label{coro:main}
    The shifted Lonely Runner Conjecture (Conjecture~\ref{conj:slrc}) holds for five runners ($n=4$).
\end{corollary}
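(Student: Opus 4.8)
Corollary~\ref{coro:main} is immediate from Theorem~\ref{thm:main}. Given distinct nonzero real velocities $v_1<\dots<v_4$ and shifts $s_i$, by \cite[Section~4.1]{Cslovjecsek2022CovRadiusPolytope} there is no loss of generality in taking the $v_i$ to be positive integers; dividing all of them by $g=\gcd(v_1,\dots,v_4)$ only rescales time and produces a primitive vector; and a primitive vector either satisfies $v_1+\dots+v_4\le 195$, handled by Theorem~\ref{thm:main}, or $v_1+\dots+v_4\ge 196$, handled by Theorem~\ref{thm:thm_e}. So the entire plan is about Theorem~\ref{thm:main}, and it is computational.

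\textbf{Step 1: reformulation as a covering radius.} For a fixed primitive $v=(v_1,\dots,v_4)\in\ZZ_{>0}^4$ I would first recast ``\,Conjecture~\ref{conj:slrc} holds for $v$ and every $s$\,'' in the form used by Henze and Malikiosis. Recentering each torus coordinate by $\tfrac12$, the positions forbidden by the conjecture form the cube $[-\rho,\rho]^4$ of half-width $\rho=\tfrac12-\tfrac1{n+1}=\tfrac3{10}$, and the conjecture for $v$ and all $s$ is equivalent to $\RR v+\ZZ^4+[-\rho,\rho]^4=\RR^4$. Projecting along the line $\RR v$ turns this into the statement that the $3$-dimensional zonotope $Z_v:=\pi_v\!\big([-\rho,\rho]^4\big)\subseteq\RR^3$ --- a projection of the $4$-cube, hence a $3$-zonotope with four generators --- covers the $3$-torus $\RR^3/\Lambda_v$, where $\Lambda_v:=\pi_v(\ZZ^4)$ is a rank-$3$ lattice; equivalently, $\mu(Z_v,\Lambda_v)\le 1$. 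I would compute explicit rational generators of $Z_v$ and a rational basis of $\Lambda_v$ from $v$, so that everything below is exact; after a change of coordinates one may even assume $\Lambda_v=\ZZ^3$.

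\textbf{Step 2: the computation.} Next, enumerate the $2\,133\,561$ primitive vectors with $1\le v_1<v_2<v_3<v_4$ and $\sum v_i\le 195$ (a cheap finite loop, which also verifies the stated count), and for each certify $\mu(Z_v,\Lambda_v)\le 1$ using the dyadic fundamental domain algorithm of the paper. Concretely, subdivide a fundamental domain of $\Lambda_v$ into dyadic boxes and check, box by box, that each is contained in a single $\Lambda_v$-translate of $Z_v$ (a finite, exact test, since $Z_v$, $\Lambda_v$ and the boxes are rational), subdividing adaptively where this fails. For all but a handful of vectors $\mu(Z_v,\Lambda_v)<1$ and the procedure terminates with a rigorous certificate; for the few \emph{tight} vectors, where $\mu(Z_v,\Lambda_v)=1$, the refinement does not terminate, and I would instead locate the deep hole exactly by a finite case analysis over the faces of the $3$-zonotope, confirming $\mu=1\le 1$. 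Classifying which tight instances remain tight for the unshifted LRC is then a single check at $s=0$, though that refinement is not needed for the corollary.

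\textbf{Main obstacle.} The difficulty is Step~2 \emph{at scale}: a direct covering-radius computation for each of two million instances is hopelessly slow, so the dyadic-fundamental-domain certificate must be simultaneously rigorous and fast. I expect this to require cheap a priori bounds that dispose of the vast majority of vectors at once and trigger adaptive refinement only near the tight locus, as well as careful control of exact rational arithmetic to avoid coefficient blow-up in the lattice bases and box subdivisions. Being certain that the enumeration, the geometry and the arithmetic are all bug-free --- ideally via an independent re-implementation --- is where the real effort lies.
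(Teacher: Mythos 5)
Your first paragraph is exactly the paper's deduction: Corollary~\ref{coro:main} is immediate from Theorem~\ref{thm:main} (for primitive vectors of sum $\le 195$) together with Theorem~\ref{thm:thm_e} (for sum $\ge 196$), after the standard reductions to positive integer velocities and to a primitive vector. That part is correct and takes the same route.

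The rest of the proposal elaborates on how Theorem~\ref{thm:main} itself is established, and here your plan largely matches the paper's zonotopal reformulation (your projected cube $\pi_v([-\rho,\rho]^4)$ is, up to the $\rho$-scaling and a lattice change of basis, exactly the sLR zonotope with volume vector $v$, and $\mu(Z_v,\Lambda_v)\le 1$ is exactly $\mu(Z)\le\tfrac{n-1}{n+1}$). However, there is one genuine missing idea. You write that for tight vectors ``the refinement does not terminate'' and that you would instead locate the deep hole by a face-by-face case analysis. The paper's Algorithm~\ref{alg:decide_covering_radius} in fact terminates on tight instances as well, because it does not try to fit a dyadic fundamental domain into $\tfrac35 Z$ but into the slightly inflated $\left(\tfrac35+\tfrac{1}{2sD}\right)Z$, where $D$ is an explicit bound on the denominator of $\mu(Z)$ (Lemma~\ref{lemma:last-covered}, Corollary~\ref{cor:cov_radius_size_bound}); Proposition~\ref{prop:cov_radius_safe_interval} then shows that a fundamental domain inside the inflated copy still certifies $\mu(Z)\le\tfrac35$ exactly. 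Without this denominator bound and the resulting slack, a naive dyadic search would indeed fail to terminate precisely at the three tight instances, so the idea you are missing is not cosmetic — it is what makes a single uniform algorithm work. Your proposed workaround (explicitly locating the deep hole for the tight cases) is a plausible alternative, but it is a different and more ad hoc approach than the paper's, and you would still need to justify that only finitely many candidates for the deep hole need to be examined, which is essentially a rederivation of Lemma~\ref{lemma:last-covered}.
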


We also determine that there are exactly 3 primitive integer velocity vectors $(v_1, v_2, v_3, v_4)$
for which the bound $\tfrac{1}{5}$ in the conjecture is the best possible.
We call such vectors tight.

\begin{definition}
\label{def:tight}
A velocity vector $\vv = (v_1, \ldots, v_n) \in \ZZ^n$ is \emph{tight for the sLRC}
if there are starting points $s_1, \ldots, s_n \in \RR$ such that for every time $t\in \RR$,
there is an index $i\in [n]$ such that $\dist(v_i t + s_i, \ZZ) \leq \frac{1}{n+1}$; equivalently, such that
\[
v_i t + s_i \in \left[-\tfrac{1}{n+1},\tfrac{1}{n+1}\right] +  \ZZ.
\]
If this happens with $s_i=0$ for all $i$, then $v$ is \emph{tight for the (non-shifted) LRC}.
\end{definition}

Clearly, being tight for the nonshifted version implies tight for the shifted one.
As an example, it is easy to show, as was already observed by Wills~\cite{willslrcII},
that the vector $(1, \dots, n)$ is tight for the nonshifted version, for all $n \in \NN$: suppose that at a certain time $t$ we have no $v_i t$ in $[-1/(n+1), 1/(n+1)]+ \ZZ$.
Then the pigeon-hole principle implies that at that time at least two runners, say $i$ and $i'$ must be in the same interval $[j/(t+1),(j+1)/(t+1)]+ \ZZ$ for some $j\in [n-1]$.
But then, assuming w.l.o.g.\ that $i'>i$, we have the contradiction that%
\footnote{This argument is an instance of the general proof of Dirichlet's approximation theorem.}
\[
v_i t, v_{i'}t \in \left[\tfrac{j}{n+1},\tfrac{j+1}{n+1}\right]+\ZZ \quad
\Rightarrow
\quad
v_{i'-i} t = v_{i'}t- v_i t \in \left[-\tfrac{1}{n+1},\tfrac{1}{n+1}\right]+\ZZ,
\]

\begin{theorem}\label{thm:tight}
    The only integer velocity vectors $(v_1, v_2, v_3, v_4) \in \ZZ^4$ with
    $1 \le v_1 < v_2 < v_3 < v_4$, and $\gcd(v_1, v_2, v_3, v_4) = 1$ that are tight (for the shifted version)
    are $(1, 2, 3, 4)$, $(1, 3, 4, 6)$, and $(1, 3, 4, 7)$.
\end{theorem}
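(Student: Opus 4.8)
The plan is to reduce the question of tightness to a finite computation of the same flavor as the one behind Theorem \ref{thm:main}. Recall (from the reformulation we rely on, following Henze--Malikiosis) that the sLRC for a velocity vector $\vv=(v_1,\dots,v_n)$ is equivalent to a covering-radius statement for the zonotope $Z_\vv$ generated by the rows of the matrix whose $i$-th row encodes $v_i$; concretely, $\vv$ satisfies the sLRC if and only if the covering radius of the associated rational polytope is $\le \tfrac{1}{n+1}$, and $\vv$ is \emph{tight} precisely when this covering radius equals $\tfrac{1}{n+1}$ exactly. So the three vectors claimed to be tight are exactly the $\vv$ in the search range for which our algorithm certifies equality (covering radius $=\tfrac15$) rather than strict inequality.

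First I would fix the finite search space. By Theorem \ref{thm:thm_e} any tight vector with $n=4$ must satisfy $v_1+v_2+v_3+v_4\le 195$ (tightness forces the sLRC bound to be attained, not violated, but a vector with sum $\ge 196$ is \emph{not} tight because its covering radius is strictly below $\tfrac15$), and dividing by $\gcd$ we may assume the vector is primitive; this is exactly the list of $2\,133\,561$ vectors from Theorem \ref{thm:main}. Then I would run the dyadic-fundamental-domain algorithm on each vector, but now recording not just "covering radius $\le \tfrac15$" but the exact value when it equals $\tfrac15$: since all data are rational with controlled denominators (the velocities are integers $\le 195$, so the relevant polytope has vertices with denominators dividing a bounded quantity), equality can be certified exactly rather than numerically. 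The algorithm outputs that exactly three primitive vectors in the range have covering radius equal to $\tfrac15$, namely $(1,2,3,4)$, $(1,3,4,6)$ and $(1,3,4,7)$; for all others the certified upper bound on the covering radius is strictly less than $\tfrac15$, hence they are not tight.

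It remains to confirm that these three vectors really are tight, i.e.\ to exhibit the shifts $s_i$ realizing equality, and to check which are tight already for the non-shifted LRC. For $(1,2,3,4)$ the displayed pigeon-hole argument in the text shows it is tight even for the non-shifted LRC (with all $s_i=0$). For $(1,3,4,6)$ and $(1,3,4,7)$ I would extract, from the vertex of the covering body at which the covering radius $\tfrac15$ is attained, an explicit point $t^*$ and shifts $s_i$ such that for every $t$ some $v_i t+s_i$ lies in $[-\tfrac15,\tfrac15]+\ZZ$; one then verifies directly (a finite, piecewise-linear check, since the "bad set" $\bigcup_i \{t : \dist(v_i t+s_i,\ZZ)\le \tfrac15\}$ is a union of intervals whose complement one checks is empty modulo the common period) that these are indeed tight for the shifted conjecture, and one checks by a short separate computation that no choice of shifts makes them coverable with the stronger radius, and that $s_i=0$ does \emph{not} work, so they are tight for the shifted but not the non-shifted version --- consistent with the count of "exactly two tight for the LRC" announced in the abstract.

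The main obstacle is the exactness requirement: the dyadic algorithm is designed to \emph{bound} the covering radius from above, so to detect the borderline case covering radius $=\tfrac15$ one must argue that the algorithm, run with rational arithmetic, either proves a strict inequality or exhibits a point of the polytope at lattice distance exactly $\tfrac15$, with no gap in between; this relies on a denominator bound guaranteeing that the true covering radius of each $Z_\vv$ in the range is a rational number whose distance from $\tfrac15$, if nonzero, exceeds the algorithm's resolution, so that a sufficiently deep dyadic refinement resolves every vector. Establishing that denominator/resolution bound, and organizing the computation so that the three exceptional vectors are isolated with a rigorous (not merely floating-point) certificate of equality, is the crux; the rest is bookkeeping and the explicit construction of the shifts.
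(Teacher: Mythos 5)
Your overall strategy matches the paper's: reduce tightness to $\mu(Z)=\tfrac{n-1}{n+1}$ for the associated sLR zonotope, use the volume bound to cut to a finite list, run the covering-radius algorithm to isolate the three borderline cases, and then certify tightness of those three by hand (pigeonhole for $(1,2,3,4)$, explicit shifts for $(1,3,4,6)$, and citing Cusick--Pomerance for $(1,3,4,7)$). Two points need correction or attention, however.

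First, you have the threshold wrong throughout: in the zonotopal reformulation (Proposition~\ref{prop:zono_sLRZ}), the velocity vector $\vv$ satisfies the sLRC iff the covering radius of the sLR zonotope is at most $\tfrac{n-1}{n+1}$, not $\tfrac{1}{n+1}$. For $n=4$ the relevant value is $\tfrac{3}{5}$, not $\tfrac{1}{5}$. The structure of your argument is unaffected, but this is the quantity being certified.

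Second, and more substantively, you correctly observe that to rule out tight vectors with $\sum v_i\ge 196$ you need the \emph{strict} inequality $\mu(Z)<\tfrac{3}{5}$ for large volume, not merely $\mu(Z)\le\tfrac{3}{5}$ as given by Theorem~\ref{thm:thm_e}. You then simply assert the strict bound. This is not a consequence of Theorem~\ref{thm:thm_e} but requires the stronger statement recorded in the paper as Theorem~\ref{thm:no_big_tight}, which is a separate (if mild) strengthening; without invoking it, the reduction to a finite list of tight candidates is not justified. Finally, on the computational side, note that the paper's algorithm does not directly output ``covering radius $=\tfrac{3}{5}$''; it is a decision procedure for $\mu(P)\le\rho$ (and, with the sign of the safety margin flipped, for $\mu(P)\ge\rho$). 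One runs the $\ge$-variant to certify strictness $\mu(Z)<\tfrac{3}{5}$ for all but three vectors, and then handles those three separately; your phrasing of ``the algorithm outputs that exactly three primitive vectors have covering radius equal to $\tfrac{1}{5}$'' should be read as shorthand for this two-sided check.
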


The vectors $(1, 2, 3, 4)$ and $(1, 3, 4, 7)$ are known to be tight for the non-shifted LRC and, in fact, they are the only ones for $n=4$~\cite{cusickviewobIII}.
The vector $(1, 3, 4, 6)$ is new and shows that shifted tightness does not imply the unshifted one.
We include a direct proof of its tightness in Proposition~\ref{prop:tight_1_3_4_6}.

\bigskip

Our proofs of Theorems~\ref{thm:main} and~\ref{thm:tight} (as well as the proof of Theorem~\ref{thm:thm_e} in~\cite{Malikiosis2024LinExpCheckingLRC})
are based on a rephrasing of the Lonely Runner conjecture (both shifted and non-shifted) in terms of lattice
$(n-1)$-zonotopes with $n$ generators~\cite{zonorunners,sLRC}.
For the shifted case this rephrasing goes as follows:
Conjecture~\ref{conj:slrc} holds for a certain $n$ if and only if every ``strong lonely runner zonotope'' with $n$ generators has its \emph{covering radius} upper bounded by $\tfrac{n-1}{n+1}$.
Here, a \emph{strong lonely runner zonotope} or \emph{sLR zonotope} for short is an $(n-1)$-dimensional lattice zonotope with $n$ generators and whose \emph{volume vector} has no repeated entries.
Via this rephrasing and Theorem~\ref{thm:thm_e} (or its slightly stronger version~\ref{thm:no_big_tight}), Theorems~\ref{thm:main} and~\ref{thm:tight} follow respectively from Theorems~\ref{thm:sLRZ_195} and~\ref{thm:sLRZ_tight_3}, which we prove computationally.

In Section~\ref{sec:zonotopes} we state more precisely the zonotopal rephrasing of sLRC (Proposition~\ref{prop:zono_sLRZ}) and recall some basic facts about the covering radius of a convex body $C$ and its relation to the existence of fundamental domains contained in dilations of $C$.
We also prove that the denominator of the covering radius of a lattice polytope can be explicitly
bounded in terms of the inequalities defining the polytope (Corollary~\ref{cor:cov_radius_size_bound}). This is needed in our algorithms, since it gives us some slackness in the inequalities needed to bound covering radii (Proposition~\ref{prop:cov_radius_safe_interval}).

In Section~\ref{sec:algorithms} we describe the algorithms that we use for our proofs.
Algorithm~\ref{alg:slrz_from_vv} constructs an sLR zonotope with small coordinates associated with a given velocity vector.
Algorithm~\ref{alg:decide_covering_radius} is our main algorithm, 
deciding whether the covering radius of a given rational polytope $P$ is bounded by a given rational number $\rho$.
The algorithm either constructs a \emph{dyadic fundamental domain} contained in $\rho P$, or certifies that such a domain does not exist.
The correctness of the algorithm (Theorem~\ref{thm:alg_decide_cov_radius_sLRZ_correct}) is based on the slackness given by the bounds of Section~\ref{sec:zonotopes} (Theorem~\ref{thm:cov_radius_dyadic}).
In Remark~\ref{rem:exact_mu}, we discuss how to turn Algorithm~\ref{alg:decide_covering_radius} into an algorithm that outputs the exact covering radius rather than only verifying a given bound.

In Section~\ref{sec:computational_results} we present our computational results.
More specifically, Section~\ref{subsec:implementation_details} comments on some implementation details, Section~\ref{subsec:computational} shows statistics on performance and complexity of the fundamental domains constructed, including geometric descriptions of the three tight instances, and
Section~\ref{subsec:certificates} describes what someone wanting to reproduce and/or independently validate our results would need to do.
To this end, we have published a \href{https://github.com/endorh/slrc-zonotopes}{git repository}\footnote{\url{https://github.com/endorh/slrc-zonotopes}}with all our code.
Besides the algorithms used for obtaining our results and general utilities to work with Lonely Runner zonotopes, the repository includes a certificate for each of the (more than two million) sLR $3$-zonotopes of Theorem~\ref{thm:main}, and a script that checks the correctness of all certificates in little time (half an hour on a standard PC for the whole list).
More information is available in the repository's README file, and in Section~\ref{subsec:certificates}.

\medskip
We finish this introduction proving from scratch the tightness of the velocity vector $(1, 3, 4, 6)$ of Theorem~\ref{thm:tight}.

\begin{proposition}[Tightness of $(1, 3, 4, 6)$]
    \label{prop:tight_1_3_4_6}
    Consider four runners in a circular track of unit length, with velocities $1$, $3$, $4$, and $6$.
    If the runners start at positions $s_1=0$, $s_3 \in \left[-\tfrac{1}{20}, \tfrac{1}{20}\right]$, $s_4=0$ and $s_6=\tfrac{1}{2}$, 
    then, at every point in time, there is some runner in the interval $\left[-\tfrac{1}{5}, \tfrac{1}{5}\right]$.
\end{proposition}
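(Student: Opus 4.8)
The plan is to restate the proposition as a covering problem on the ``time circle'' $\RR/\ZZ$ and then verify that covering by an explicit finite computation, after using a reflection symmetry to halve the work. To set up, for $v \in \{1,3,4,6\}$ let $s_v$ denote the starting point of the runner of velocity $v$, so $s_1 = s_4 = 0$, $s_6 = \tfrac12$, and $s_3$ is a parameter ranging over $[-\tfrac1{20},\tfrac1{20}]$. Put
\[
A_v \;=\; \bigl\{\, t \in \RR/\ZZ \;:\; v t + s_v \in [-\tfrac15,\tfrac15] + \ZZ \,\bigr\},
\]
a disjoint union of $v$ closed arcs, each of length $\tfrac{2}{5v}$. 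The proposition is precisely the assertion $A_1 \cup A_3 \cup A_4 \cup A_6 = \RR/\ZZ$. Since $A_1 = [-\tfrac15,\tfrac15] + \ZZ$, it remains only to cover the complementary arc, which I write as $(\tfrac15,\tfrac45) \subseteq [0,1)$, by $A_3 \cup A_4 \cup A_6$.

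Next I would record the reflection $\sigma\colon t \mapsto -t$ of $\RR/\ZZ$. Because $[-\tfrac15,\tfrac15]$ is symmetric about $0$ and because $0 \equiv -0$ and $\tfrac12 \equiv -\tfrac12 \pmod 1$, the sets $A_1$, $A_4$, $A_6$ are $\sigma$-invariant, while the set $A_3$ computed with shift $s_3$ is carried by $\sigma$ to the set $A_3$ computed with shift $-s_3$, which is again an admissible value of the parameter. Consequently it is enough to prove: for every $s_3 \in [-\tfrac1{20},\tfrac1{20}]$ one has $(\tfrac15,\tfrac12] \subseteq A_3 \cup A_4 \cup A_6$; the half-arc $[\tfrac12,\tfrac45)$ then follows by applying $\sigma$ and replacing $s_3$ by $-s_3$.

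For the finite check I would list the arcs that meet $(\tfrac15,\tfrac12]$, all written over the common denominator $60$. The two arcs of $A_4$ around $t=\tfrac14$ and $t=\tfrac12$ contribute $[\tfrac{12}{60},\tfrac{18}{60}]$ and $[\tfrac{27}{60},\tfrac{30}{60}]$; the arc of $A_6$ around $t=\tfrac5{12}$ contributes $[\tfrac{23}{60},\tfrac{27}{60}]$; and the arc of $A_3$ around $t=\tfrac13$ equals $\bigl[\tfrac13(\tfrac45 - s_3),\, \tfrac13(\tfrac65 - s_3)\bigr]$, whose two defining inequalities $\tfrac13(\tfrac45 - s_3)\le \tfrac{17}{60}$ and $\tfrac13(\tfrac65 - s_3)\ge \tfrac{23}{60}$ are equivalent to $s_3 \ge -\tfrac1{20}$ and $s_3 \le \tfrac1{20}$, so this arc always contains the \emph{fixed} interval $[\tfrac{17}{60},\tfrac{23}{60}]$. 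The four intervals $[\tfrac{12}{60},\tfrac{18}{60}]$, $[\tfrac{17}{60},\tfrac{23}{60}]$, $[\tfrac{23}{60},\tfrac{27}{60}]$, $[\tfrac{27}{60},\tfrac{30}{60}]$ overlap consecutively, so their union is $[\tfrac{12}{60},\tfrac{30}{60}] = [\tfrac15,\tfrac12] \supseteq (\tfrac15,\tfrac12]$, which finishes the argument.

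The step I expect to require the most care is precisely this uniformity in $s_3$: that a single time-subinterval, independent of $s_3$, is swallowed by runner $3$ for the entire admissible range of starting points (the computation above shows the two relevant inequalities are tight exactly at $s_3 = \pm\tfrac1{20}$). Everything else is routine endpoint bookkeeping — all arcs $A_v$ are taken closed, matching the non-strict inequalities $\dist(\cdot,\ZZ)\le\tfrac15$ and $|s_3|\le\tfrac1{20}$ in the statement — together with the $s_3 \leftrightarrow -s_3$ swap in the reflection reduction.
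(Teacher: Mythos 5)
Your proof is correct and, up to presentation, it is the same argument as the paper's. The paper rescales the time axis by a factor of $60$ and then walks through the half-period $t\in[0,30]$ runner by runner, covering it with the intervals $[0,12]$, $[12,18]$, $[18,23]$, $[23,27]$, $[27,30]$ assigned to runners $1$, $4$, $3$, $6$, $4$ respectively; these are exactly the arcs you wrote down over denominator $60$, plus the runner-$1$ arc that you handle separately. The one place where you are more explicit than the paper is the reflection step: the paper simply says ``by symmetry around $t=0$,'' whereas you spell out that $\sigma$ fixes $A_1,A_4,A_6$ and carries $A_3(s_3)$ to $A_3(-s_3)$, which is again admissible — a worthwhile clarification, since without the symmetry of the range $s_3\in[-\tfrac1{20},\tfrac1{20}]$ the reduction would fail. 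Your observation that runner $3$'s arc contains the fixed interval $[\tfrac{17}{60},\tfrac{23}{60}]$ uniformly in $s_3$, with both inequalities tight at $s_3=\pm\tfrac1{20}$, is likewise present in the paper (at $t=23$ runner $3$ is ``at worst leaving $I$''), just phrased less structurally.
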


\begin{proof}
    Since all velocities are integers, the configuration is periodic with period one.
    To simplify notation, we multiply the length of the track by a factor of $60$, keeping the velocities unchanged, thus multiplying the period by $60$ as well.
    Observe that this changes the initial positions to $s_3\in [-3,3]$ and $s_6=30$.
    Here and in what follows, we are labeling runners by their velocities.

    Hence, we need to show that at every point in time between $0$ and $60$ some runner is in the interval $I\coloneqq[-12, 12]$ (modulo $60$).
    By symmetry around $t = 0$, we only need to consider $t$ between $0$ and $30$.

    \begin{itemize}
        \item $t\in \left[0, 12\right]$:
            Runner $1$ is at position $0+t \in I$.
        \item $t\in \left[12, 18\right]$:
            At $t=12$, runner $1$ is leaving $I$ but runner $4$ is at position $0+4\cdot 12=48=60-12$,
            so it enters $I$ and remains in it for the next $\frac{2\cdot 12}{4}=6$ units of time.
        \item $t\in \left[18, 23\right]$:
            At $t=18$, runner $4$ is leaving the interval $I$.
            Fortunately, runner $3$ is somewhere between $-3+3\cdot 18=51$ and $3+3\cdot 18=57$, so it lies in $I$.
            At $t=23$, runner $3$ is still between $-3+3\cdot 23 = 66$ and $3+3\cdot 23 = 72$, at worst leaving $I$.
        \item $t\in \left[23, 27\right]$:
            At $t=23$, runner $6$ is at position $30+ 6\cdot 23=168= -12\pmod{60}$, entering the interval $I$.
            It stays in $I$ for $\frac{2\cdot 12}{6}=4$ units of time, until $t=27$.
        \item $t\in \left[27, 30\right]$:
            At $t=27$, runner $6$ is leaving the interval $I$ but runner $4$ is at position $4\cdot 27=108=2\cdot 60-12$.
            Runner $4$ stays in $I$ for $\frac{2\cdot 12}{4}=6$ units of time, until $t=33$.
            \qedhere
    \end{itemize}
\end{proof}

We illustrate Proposition~\ref{prop:tight_1_3_4_6} and its proof in Figure~\ref{fig:diagram_tight_1_3_4_6}.

\begin{figure}[htb]
    \begin{tikzpicture}[xscale=0.1,yscale=0.5]

        % Runner 1: non-periodic on [0,12] and [48,60] at height 4
            % first pulse in the only period:
            \draw[very thick] (0,4) -- (12,4);
            % second pulse in the only period:
            \draw[very thick] (48,4) -- (60,4);
        % label
        \node[left] at (0,4) {Runner 1};

        % Runner 3: period 20, on from 0 to 4 and from 16 to 20, at height 3
        \foreach \n in {0,1,2} {
            % first pulse in each period:
            % \draw[very thick] ({20*\n+0},3) -- ({20*\n+4},3);
            % second pulse in each period:
            % \draw[very thick] ({20*\n+16},3) -- ({20*\n+20},3);

            % right half
            \fill[color=black] ({20*\n+0},2.85) -- ({20*\n+0},3.15) -- ({20*\n+4+1},3.15) -- ({20*\n+4-1},2.85) -- cycle;
            % left half
            \fill[color=black] ({20*\n+16-1},2.85) -- ({20*\n+16+1},3.15) -- ({20*\n+20},3.15) -- ({20*\n+20},2.85) -- cycle;
        }
        % label
        \node[left] at (0,3) {Runner 3};

        % Runner 4: period 15, on from 0 to 3 and from 12 to 15, at height 2
        \foreach \n in {0,1,2,3} {
            % first pulse in each period:
            \draw[very thick] ({15*\n+0},2) -- ({15*\n+3},2);
            % second pulse in each period:
            \draw[very thick] ({15*\n+12},2) -- ({15*\n+15},2);
        }
        % label
        \node[left] at (0,2) {Runner 4};

        % Runner 6: period 10, off from 0 to 3, on from 3 to 7, off from 7 to 10, at height 1
        \foreach \n in {0,1,2,3,4,5} {
            % single pulse for each period:
            \draw[very thick] ({10*\n+3},1) -- ({10*\n+7},1);
        }
        % label
        \node[left] at (0,1) {Runner 6};

        % Draw the time axis (common for all waves)
        \draw[->] (0,0) -- (62,0) node[right] {\(t\)};
            % Add tick marks at multiples of 10
            \foreach \x in {0,10,20,30,40,50,60} {
                \draw (\x,0.2) -- (\x,-0.2) node[below] {\small \x};
            }

        % Add vertical dotted lines at x=12 and x=48
        \draw[dashed, thick, draw=red] (12,0) -- (12,4.5);
        \draw[dashed, thick, draw=red] (48,0) -- (48,4.5);
        \draw[dashed, thick, draw=red] (27,0) -- (27,4.5);
        \draw[dashed, thick, draw=red] (33,0) -- (33,4.5);
        \draw[dotted] (18,0) -- (18,4.5);
        \draw[dotted] (42,0) -- (42,4.5);
        \draw[dashed, draw=blue] (23,0) -- (23,4.5);
        \draw[dashed, draw=blue] (37,0) -- (37,4.5);

        % Left and right borders
        \draw[draw=black] ( 0,0.2) -- ( 0,4.5);
        \draw[draw=black] (60,0.2) -- (60,4.5);
    \end{tikzpicture}

    \caption{A graphical version of the proof of Proposition~\ref{prop:tight_1_3_4_6}. The positions of the runner with velocity $3$ are represented by rhomboids so that
    each horizontal section represents a choice $s_3\in [-3,3]$ for its starting position.}
    \label{fig:diagram_tight_1_3_4_6}
\end{figure}
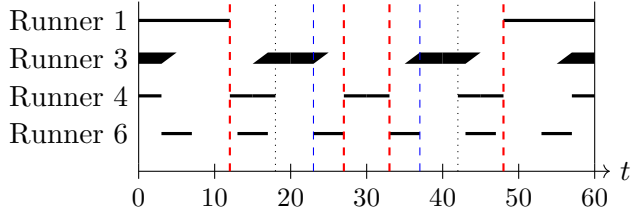

\begin{remark}
\label{rem:tight_1_3_4_6}
Although we do not prove it, the starting positions $(s_1,s_2,s_3,s_4)$ in the statement are \emph{exactly} the tight ones.
This can be derived from the description of the \emph{last covered points} of the corresponding sLR zonotope, which we include in Section~\ref{subsec:computational}.
Compare with Remark~\ref{rem:last_covered}.
\end{remark}

\section{Zonotopal statement of the Lonely Runner Conjecture}
\label{sec:zonotopes}

A \emph{zonotope} is any polytope obtained as the Minkowski sum of finitely many line segments.
As such, any zonotope $Z$ can be written as
\[
\cc + \sum_{i=1}^n [\oo, \uu_i] =\left\{\cc + \sum_{i=1}^n \lambda_i \uu_i : \lambda_i \in [0,1]\ \forall i\right\}
\]
for a certain finite set $\uu_1,\dots,\uu_n \in \RR^d$ of vectors, called the \emph{generators} of $Z$ and a certain point $\cc$.
The point $\cc$ is not important for us, since all we do is invariant under translation.
One natural choice is $\cc=\oo$ but often a more convenient choice is $\cc = -\tfrac12 \sum_{i=1}^n \uu_i$, since it
 makes the zonotope become $Z= \tfrac12\sum_{i=1}^n [-\uu_i, \uu_i]$ and be centrally symmetric around the origin.

\subsection{Lonely runner zonotopes and their volume vectors}
\label{subsec:lrz-volume-vector}

\begin{definition}[Lonely runner zonotopes, volume vector]
    \label{def:lrz_volume_vector}
    A \emph{Lonely Runner zonotope (LR zonotope)}
    is any zonotope $Z\subset \RR^{n-1}$ generated by a set of $n$ integer vectors $\mathbf U=\{\uu_i: 1 \le i \le n\}\subset \ZZ^{n-1}$ in linear general position;
    that is, such that every $n - 1$ of them are a linear basis of $\RR^{n-1}$.

    The \emph{volume vector} of $Z$ is the vector
    $\vv =(v_1, \dots, v_n)\in \ZZ_{>0}^n$ defined by
    \begin{equation}
         \label{eq:volume_vector_condition}
       v_i \coloneqq |\det(\mathbf U \setminus \{\uu_i\} )|.
    \end{equation} 
    When all entries of the volume vector are distinct, we say that $Z$ is a \emph{strong Lonely Runner zonotope (sLR zonotope)}.
\end{definition}

We call $\vv$ the {volume vector} of $Z$, because its entries are
the volumes of the $n$ parallelepipeds that make up $Z$.
In particular, we have that $\vol(Z) = \sum_{i=1}^n v_i$ (see details, e.g., in~\cite{Malikiosis2024LinExpCheckingLRC}).
The generators and the volume vector satisfy
\begin{equation} \label{eq:volume_vector}
v_1 \uu_1 \pm \cdots \pm v_n \uu_n =0
\end{equation}
for some choice of signs.
In fact, this equation (together with positivity) characterizes $\vv$ for given generators, up to a scalar factor.
Indeed, for each $j\in 1,\dots, n-1$, consider the determinant:
\[
    \begin{vmatrix}
      u_{1,j} & u_{2,j} & \cdots & u_{n,j}\\
      u_{1,1} & u_{2,1} & \cdots & u_{n,1} \\
      u_{1,2} & u_{2,2} & \cdots & u_{n,2} \\
      \vdots    & \vdots    & \ddots & \vdots    \\
      u_{1,n-1} & u_{2,n-1} & \cdots & u_{n,n-1} \
    \end{vmatrix},
\]
where $\uu_i=( u_{i,1},\dots,u_{i,n-1})$.
This determinant is zero because it has a repeated row, and it equals the $j$-th coordinate of~\eqref{eq:volume_vector} via Laplace's expansion of the determinant along its first row.
Since any $n-1$ of the vectors in $U$ are independent, Equation~\eqref{eq:volume_vector} has a $1$-dimensional space of solutions.
In this sense, Equation~\ref{eq:volume_vector} characterizes $\vv$ up to signs and scalar multiplication.

In the following result and the rest of the paper, a unimodular transformation is an
affine transformation with integer coefficients and determinant $\pm1$.
That is, an element of $\operatorname {AGL}(n,\ZZ) \coloneqq \ZZ^n \rtimes \operatorname {GL} (n,\ZZ)$.

\begin{proposition}[\cite{Malikiosis2024LinExpCheckingLRC}]
\label{prop:lSRZ_by_volume_vectors}
    For every integer vector $\vv=(v_1, \dots, v_n) \in \ZZ_{>0}^n$, there is some LR zonotope with
    integer generators and with volume vector $\vv$.
    If $\gcd(v_1, \dots, v_n) = 1$, then any two such zonotopes are equivalent by a unimodular
    transformation.
\end{proposition}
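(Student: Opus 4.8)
The plan is to prove existence and uniqueness separately, in each case encoding an LR zonotope by the integer $(n-1)\times n$ matrix $U$ whose columns are its generators $\uu_1,\dots,\uu_n$, and exploiting the signed maximal minor vector $\mathbf m(U)=(m_1,\dots,m_n)$, $m_i\coloneqq(-1)^{i+1}\det(\mathbf U\setminus\{\uu_i\})$. The Laplace expansion identity displayed just before the statement says precisely that $U\,\mathbf m(U)=\oo$, so $\mathbf m(U)$ spans the line $\ker U$; moreover $|m_i|=v_i$. I will assume $n\ge 2$ (for $n=1$ the statement is degenerate) and, as in the text, treat zonotopes up to translation.

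For existence, set $d\coloneqq\gcd(v_1,\dots,v_n)$ and $\mathbf w\coloneqq\tfrac1d\,(v_1,-v_2,v_3,\dots,(-1)^{n+1}v_n)\in\ZZ^n$, a primitive vector all of whose coordinates are nonzero. Then $\Lambda\coloneqq\mathbf w^{\perp}\cap\ZZ^n$ is a saturated sublattice of $\ZZ^n$ of rank $n-1$; I would pick a $\ZZ$-basis of $\Lambda$ and let $U_0$ be the $(n-1)\times n$ matrix whose rows are this basis. Then $U_0\mathbf w=\oo$ by construction, so $\ker U_0=\QQ\mathbf w$ and hence $\mathbf m(U_0)\in\QQ\mathbf w$. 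At this point I invoke the elementary divisor fact that the gcd of the $n$ maximal minors of a full-rank integer $(n-1)\times n$ matrix equals the index of its row lattice inside that lattice's saturation in $\ZZ^n$; here the index is $1$, so $\mathbf m(U_0)$ is primitive and therefore equals $\pm\mathbf w$. Thus the columns of $U_0$ generate an LR zonotope with volume vector $\vv/d$. If $d=1$ this finishes the existence part; otherwise I replace $U_0$ by $U\coloneqq\operatorname{diag}(d,1,\dots,1)\,U_0$, so that multilinearity of the determinant in the first row multiplies every $m_i$ by $d$ and the volume vector becomes $\vv$, while general position is preserved because all $m_i$ remain nonzero.

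For uniqueness, assume $\gcd(v_1,\dots,v_n)=1$ and let $Z,Z'$ be LR zonotopes with volume vector $\vv$, with generator matrices $U,U'$ and the generators labeled so that $|m_i(U)|=|m_i(U')|=v_i$. Negating one column of $U'$ reflects a single generator --- which translates $Z'$ --- and changes $\mathbf m(U')$ by flipping all its coordinates but one; negating one row of $U'$ applies a diagonal $\pm1$ matrix (an element of $\operatorname{GL}(n-1,\ZZ)$) and flips all coordinates of $\mathbf m(U')$. Composing an ``all-but-$k$'' flip with the ``all'' flip yields the single flip at $k$, so these operations realize every sign pattern; hence, after replacing $U'$ by $RU'S$ for suitable diagonal sign matrices $R$ and $S$ --- which applies a unimodular transformation to $Z'$ --- I may assume $\mathbf m(U')=\mathbf m(U)=:\mathbf m$. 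Since $\gcd(v_i)=1$, $\mathbf m$ is primitive, so by the same gcd-of-minors fact the row lattices of $U$ and of $U'$ both equal the saturated lattice $\mathbf m^{\perp}\cap\ZZ^n$. Hence every row of $U'$ is an integer combination of the rows of $U$, i.e.\ $U'=TU$ for an integer matrix $T$; symmetrically $U=T'U'$, and as $U$ has full row rank this forces $T'T=I$, so $T\in\operatorname{GL}(n-1,\ZZ)$. Since $T$ carries the columns of $U$ to those of $U'$, it carries $Z$ to $Z'$ up to a translation; composing with the earlier column and row negations yields the desired unimodular equivalence.

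I expect the main obstacle, needed in both halves, to be the elementary divisor statement that the gcd of the maximal minors of a full-rank integer $(n-1)\times n$ matrix equals the index of its row lattice in the saturation --- a Smith normal form computation carrying the real content. The remaining work, in particular the sign and label bookkeeping for the generators in the uniqueness argument (which is exactly what necessitates the column/row negation step), is routine.
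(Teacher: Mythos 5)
Your proof is correct, and it broadly parallels the paper's while adopting a genuinely different technical viewpoint. The paper works ``downstairs'' in $\RR^{n-1}$: for existence it writes down the explicit (rational) zonotope generated by $-\ee_1,\dots,-\ee_{n-1}$ and $\tfrac{1}{v_n}(v_1,\dots,v_{n-1})$, whose volume vector is $\tfrac1{v_n}\vv$, and integralizes it by mapping the superlattice of $\ZZ^{n-1}$ generated by these vectors onto $\ZZ^{n-1}$; for uniqueness it normalizes both generator sets to satisfy $\sum v_i\uu_i=\oo=\sum v_i\uu'_i$ (using only generator sign flips, i.e.\ translations), defines the linear map by $\uu_i\mapsto\uu'_i$ on the basis $\uu_1,\dots,\uu_{n-1}$, observes that the dependence forces $\uu_n\mapsto\uu'_n$, and deduces unimodularity from the fact that $\gcd(v_i)=1$ makes each generator set span $\ZZ^{n-1}$ --- the \emph{column} version of your gcd-of-minors identity. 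You instead work ``upstairs'' in $\ZZ^n$ via the signed-minor vector $\mathbf{m}(U)$ and row lattices: existence by taking a basis of the saturated sublattice $\mathbf{w}^{\perp}\cap\ZZ^n$ and pinning down $\mathbf{m}(U_0)=\pm\mathbf{w}$ via primitivity, uniqueness by showing both row lattices equal $\mathbf{m}^{\perp}\cap\ZZ^n$, each time invoking the \emph{row} version of the same elementary-divisor fact. The two arguments are essentially dual, and both hinge on the one Smith-normal-form ingredient you correctly flag as carrying the real content. The paper's uniqueness step is a bit shorter because matching only the relation $\sum v_i\uu_i=\oo$ on both sides (a weaker normalization than $\mathbf{m}(U)=\mathbf{m}(U')$) already suffices, sparing the row-plus-column sign bookkeeping your version needs.
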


The following proof is paraphrased from~\cite{Malikiosis2024LinExpCheckingLRC}.
Algorithm~\ref{alg:slrz_from_vv} in Section~\ref{subsec:enumerating_slrzs} uses the ideas in the proof to
compute generators for a LR zonotope from its volume vector.

\begin{proof}
Start with the LR zonotope $Z'$ generated by $\uu'_i=-\ee_i$ for $i=1,\dots,n-1$ and $\uu'_n=\tfrac1{v_n}(v_1,\dots,v_{n-1})$.
It has volume vector $\tfrac1{v_n} \vv$.
Let $\Lambda'$ be the superlattice of $\ZZ^{n-1}$ generated by $\{\uu'_i: i \in [n]\}$ and let $T$ be a linear transformation bijecting $\Lambda'$ to $\ZZ^{n-1}$.
Then $T(Z')$ has volume vector $\tfrac1{m} \vv$, where $m=\gcd(v_1,\dots,v_n)$.
If $m\ne 1$, scale one coordinate up by a factor $m$ to get the correct volume vector.

To show uniqueness, let $Z$ and $Z'$ be two integer LR zonotopes both with volume vector $\vv$.
Let $\mathbf U=\{\uu_1,\dots,\uu_n\}$ and $\mathbf U'=\{\uu'_1,\dots,\uu'_n\}$ be their sets of generators.
Assuming $\gcd(v_1,\dots,v_n)=1$ implies that both $\mathbf U$ and $\mathbf U'$ span $\ZZ^n$ as a lattice.
Since changing the sign of a generator in a zonotope is a unimodular transformation (namely a translation), there is no loss of generality in assuming that
\[
    v_1 \uu_1+\cdots + v_n \uu_n = 0 = v_1 \uu'_1+\cdots + v_n \uu'_n.
\]
This
formula implies that the linear map sending $\uu_i$ to $\uu'_i$ for $i=1,\dots,n-1$ also sends $\uu_n$ to $ \uu'_n$, and the fact that $\mathbf U$ and $\mathbf U'$ span $\ZZ^n$ implies that this map is a unimodular transformation.
\end{proof}

\subsection{Covering radius and the sLRC}
\label{subsec:cov-radius-and-slrc}
A \emph{convex body} in $\RR^d$ is a convex compact subset.
We will always assume that our convex bodies are \emph{nondegenerate}, that is, that they have a non-empty interior.
This includes all bounded full-dimensional polytopes.

\begin{definition}[Covering radius]
    Let $C \subseteq \RR^d$ be a convex body.
    The \emph{covering radius of $C$}, denoted $\mu(C)$, is
    the smallest dilation factor $\rho > 0$ such that 
    \[
    \rho C + \ZZ^d = \RR^d.
    \]
\end{definition}

The covering radius is invariant under real translations and unimodular transformations of $C$, since they amount to translations and unimodular transformations of $\rho C + \ZZ^d$, respectively.

The following zonotopal restatement of the \emph{Shifted Lonely Runner Conjecture} is taken from~\cite{Malikiosis2024LinExpCheckingLRC} but the result is implicit in~\cite{zonorunners}, and also in~\cite{sLRC, Cslovjecsek2022CovRadiusPolytope}.

\begin{proposition}[\cite{zonorunners}, see also~\protect{\cite[Proposition 1.8]{Malikiosis2024LinExpCheckingLRC}}]
\label{prop:zono_sLRZ}
    Let $\vv = (v_1, \dots, v_n) \in \ZZ_{>0}^n$ with pairwise distinct entries
    and $\gcd(v_1, \dots, v_n) = 1$.
    Then, the following are equivalent:
    \begin{enumerate}
        \item The velocity vector $\vv$ satisfies the Shifted Lonely Runner Conjecture (Conjecture~\ref{conj:slrc}) for every choice of $(s_1,\dots,s_n)$.
        \item The sLR zonotope $Z$ with volume vector $\vv$ has $\mu(Z) \le \tfrac{n-1}{n+1}$.
    \end{enumerate}
%\pacoS{added the case of equality, although it is not explicitly stated anywhere (afaik)}
Moreover, the velocity vector $\vv$ is tight if and only if $\mu(Z) = \tfrac{n-1}{n+1}$.
\end{proposition}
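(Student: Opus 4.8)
The plan is to translate the runner problem into a statement about the torus $\RR^n/\ZZ^n$ restricted to the line $\{t\vv : t\in\RR\}$, and then to recognize that line, projected suitably, as living in the quotient by the zonotope's defining lattice. First I would set up the standard correspondence: a choice of starting points $(s_1,\dots,s_n)$ together with the velocity vector $\vv$ defines an affine line $\ell_{\ss} = \{\ss + t\vv : t\in\RR\}$ in $\RR^n$, and the sLRC asks whether this line meets the ``forbidden region'' $F := \big(\big[-\tfrac1{n+1},\tfrac1{n+1}\big]^n + \ZZ^n\big)$ in its complement, i.e.\ whether $\ell_{\ss}\not\subseteq F$. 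The key linear-algebra input is Equation~\eqref{eq:volume_vector}: after fixing signs so that $v_1\uu_1 + \cdots + v_n\uu_n = 0$, the map $\RR^n \to \RR^{n-1}$, $\ee_i \mapsto \uu_i$, has kernel exactly $\RR\vv$ (using $\gcd(v_i)=1$ and general position), so it induces an isomorphism $\RR^n/\RR\vv \cong \RR^{n-1}$ carrying the image of $\ZZ^n$ onto the lattice $\Lambda$ spanned by $\{\uu_i\}$, which equals $\ZZ^{n-1}$ by Proposition~\ref{prop:lSRZ_by_volume_vectors}.

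Next I would track the forbidden region $F$ under this quotient. The cube $\big[-\tfrac1{n+1},\tfrac1{n+1}\big]^n = \tfrac{2}{n+1}\big[-\tfrac12,\tfrac12\big]^n$ is the Minkowski sum $\sum_{i=1}^n \big[-\tfrac1{n+1},\tfrac1{n+1}\big]\ee_i$; its image under $\ee_i\mapsto\uu_i$ is $\sum_i \big[-\tfrac1{n+1},\tfrac1{n+1}\big]\uu_i = \tfrac{2}{n+1}\cdot\tfrac12\sum_i[-\uu_i,\uu_i]$, which is exactly $\tfrac{2}{n+1}$ times the centrally symmetric zonotope $Z$ (with the convenient centering $\cc = -\tfrac12\sum\uu_i$). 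Since $\vv$ maps to $\oo$, the whole forbidden region $F$ maps onto $\tfrac{2}{n+1}Z + \ZZ^{n-1}$, and a point $\ss+t\vv$ lies in $F$ iff its image (independent of $t$, equal to the image $\bar\ss$ of $\ss$) lies in $\tfrac{2}{n+1}Z + \ZZ^{n-1}$. Conversely every point of $\RR^{n-1}$ is the image of some line $\ell_{\ss}$. Therefore: there exists $\ss$ making $\vv$ a counterexample to sLRC $\iff$ there exists $\bar\ss\in\RR^{n-1}$ with $\bar\ss \notin \tfrac{2}{n+1}Z + \ZZ^{n-1}$ $\iff$ $\tfrac{2}{n+1}Z + \ZZ^{n-1} \neq \RR^{n-1}$ $\iff$ $\tfrac{2}{n+1} < \mu(Z)$, using the definition of covering radius and the fact that $\mu(Z)Z + \ZZ^{n-1} = \RR^{n-1}$ (the closed body attains its covering). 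Hence (1) $\iff$ $\mu(Z)\le\tfrac{2}{n+1} = 1 - \tfrac{n-1}{n+1}$\,---\,wait, I must be careful with the normalization: the dilation factor applied to $Z$ is $\tfrac{2}{n+1}$ only if $Z$ is taken with the symmetric centering and the cube is scaled accordingly; reconciling this with the stated bound $\tfrac{n-1}{n+1}$ requires using the precise normalization of $Z$ in~\cite{zonorunners}, and this bookkeeping is the one genuinely delicate point.

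For the ``moreover'' clause, the same dictionary applies with the boundary case made exact: $\vv$ is tight, i.e.\ \emph{some} $\ss$ forces $\ell_{\ss}\subseteq F$ while no $\ss$ does better, precisely when $\tfrac{2}{n+1}Z+\ZZ^{n-1} = \RR^{n-1}$ but no strictly smaller dilate covers, i.e.\ $\mu(Z) = \tfrac{2}{n+1}$ (again in the normalization that makes the two statements' constants match). A small point to check here is that the closed distance condition $\dist(v_it+s_i,\ZZ)\le\tfrac1{n+1}$ in Definition~\ref{def:tight} corresponds to the \emph{closed} forbidden region and hence to the infimum in the definition of $\mu$ being attained, which it is since $Z$ is compact; so no $\varepsilon$-room is lost. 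I would then note that $\gcd(v_i)=1$ is used twice\,---\,once to get $\Lambda = \ZZ^{n-1}$ and once via Proposition~\ref{prop:lSRZ_by_volume_vectors} to make $Z$ well-defined up to unimodular equivalence, under which $\mu$ is invariant\,---\,and that distinctness of the $v_i$ is not needed for the equivalence itself, only to place $Z$ in the class of sLR zonotopes. The main obstacle, as flagged, is purely the normalization: getting the scalar factor between ``cube side $\tfrac{2}{n+1}$'' and ``covering radius bound $\tfrac{n-1}{n+1}$'' right, which hinges on whether one dilates $Z$ or its polar, and on the exact generator scaling fixed in the cited references.
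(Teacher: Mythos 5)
Your projection-along-$\vv$ strategy is the right one, and the setup of the map $\ee_i \mapsto \uu_i$ with kernel $\RR\vv$ carrying $\ZZ^n$ onto $\ZZ^{n-1}$ is correct. But the "normalization" discrepancy you flag at the end is not a bookkeeping issue that can be fixed by choosing the right scaling of $Z$; it is a genuine error in how you reformulate the sLRC as a geometric condition, and it conceals a quantifier swap. The sLRC for $(\vv,\ss)$ asserts the existence of a time $t$ at which \emph{every} runner is far from $\ZZ$, i.e., $\vv t + \ss$ lies in the ``good'' set $G \coloneqq \left[\tfrac{1}{n+1},\tfrac{n}{n+1}\right]^n + \ZZ^n$, a union of disjoint cubes of side length $\tfrac{n}{n+1}-\tfrac{1}{n+1}=\tfrac{n-1}{n+1}$. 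So the correct reformulation is: sLRC holds for $(\vv,\ss)$ if and only if $\ell_\ss$ \emph{intersects} $G$. Your region $F = \left[-\tfrac{1}{n+1},\tfrac{1}{n+1}\right]^n + \ZZ^n$ is instead the set where \emph{all} coordinates are simultaneously close to $\ZZ$, and $\RR^n\setminus G$ (the set where \emph{some} coordinate is close) is a union of slabs, not small cubes. Writing the sLRC condition as $\ell_\ss\not\subseteq F$ silently replaces ``$\forall i$'' with ``$\exists i$''.

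Once the region is corrected the argument runs cleanly and the constant appears with no mystery: $\ell_\ss=\pi^{-1}(\bar\ss)$ meets $G$ iff $\bar\ss\in\pi(G)$, and $\pi(G)$ is a translate of $\tfrac{n-1}{n+1}Z + \ZZ^{n-1}$ because the good cube of side $\tfrac{n-1}{n+1}$ projects onto $\tfrac{n-1}{n+1}\sum_i[0,\uu_i]$. Hence ``sLRC holds for every $\ss$'' becomes ``$\tfrac{n-1}{n+1}Z+\ZZ^{n-1}=\RR^{n-1}$'', i.e.\ $\mu(Z)\le\tfrac{n-1}{n+1}$, and the ``moreover'' clause comes from the same dictionary applied to the open good cubes. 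Your $\tfrac{2}{n+1}$ arose from projecting the wrong cube, and cannot be reconciled with $\tfrac{n-1}{n+1}$ by rescaling. There is also a second, independent slip: even granting your $F$, the step ``there exists $\ss$ making $\vv$ a counterexample $\iff$ $\bar\ss\notin\pi(F)$'' is backwards, since $\bar\ss\notin\pi(F)$ means $\ell_\ss$ \emph{misses} $F$ entirely, which is the opposite of $\ell_\ss\subseteq F$. Both problems disappear once you work with $G$ and the ``intersects'' formulation, under which ``line meets compact set'' correctly matches ``point lies in projection of set''.
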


The problem of exactly computing the covering radius of a rational polytope $P$ was first addressed by Kannan~\cite{Kannan1992LatTranslatesPolytope},
who proved it to be $NP$-hard and designed an algorithm that is doubly-exponential in the input size.
A better algorithm, singly exponential in the input size and polynomial (of degree $2d^3$) in fixed dimension $d$,
was designed in~\cite{Cslovjecsek2022CovRadiusPolytope}; we sketch another one in Remark~\ref{rem:exact_mu}.

But in order to use the zonotopal rephrasing to prove the sLRC one does not really need to
\emph{compute} the covering radius, only to check whether a certain number $\rho$ is a bound for it.
This checking is computationally much less expensive, and amounts to finding a fundamental domain inside a scaled copy $\rho P$ of $P$, or proving that none exists.

Recall that a \emph{fundamental domain} of $\RR^d$ (with respect to $\ZZ^d$) is a set containing exactly one representative of each coset $\pp + \ZZ^d$, $\pp \in \RR^d$.
The definition of covering radius easily translates to the following statements:

\begin{lemma}
    \label{lemma:cov_radius_ub_if_contains_fundamental_domain}
    \label{lemma:cov_radius_lb_iff_not_exists_open_representative_of_quotient}
    \label{lemma:cov_radius_lb_iff_not_exists_representative_of_quotient}
    Let $C$ be a convex body in $\RR^d$ and $\rho>0$.
    Then, the following are equivalent:
    \begin{enumerate}
        \item $\mu(C) \le \rho$.
        \item $\rho C$ contains a representative of each class in $\RR^d/\ZZ^d$.
        \item $\rho C$ contains a fundamental domain.
        \item There is no open set $W \in \RR^d$ with $\rho C \cap (W + \ZZ^d) = \emptyset$.
    \end{enumerate}
\end{lemma}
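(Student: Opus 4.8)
The plan is to prove the four-way equivalence by a short cycle of implications, since each step is essentially unwinding a definition. I would establish $(1)\Rightarrow(2)\Rightarrow(3)\Rightarrow(4)\Rightarrow(1)$, or a slight variant of this, keeping in mind that the subtle point is the interplay between the \emph{closed} body $\rho C$ and the \emph{open} obstruction set $W$.

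First, $(1)\Rightarrow(2)$ is immediate: by definition of covering radius, $\mu(C)\le\rho$ means $\rho C+\ZZ^d=\RR^d$, so every point of $\RR^d$, hence every coset of $\ZZ^d$, meets $\rho C$. For $(2)\Rightarrow(3)$, I would use the standard fact that if a set $S$ contains at least one representative of every coset of $\ZZ^d$, then one can choose exactly one representative from each coset lying in $S$; the chosen subset is by construction a fundamental domain contained in $S=\rho C$. (This uses the axiom of choice, or more concretely a measurable selection, but for the statement as phrased a pointwise selection suffices.) The implication $(3)\Rightarrow(2)$ is trivial, and $(3)\Rightarrow(1)$ follows because a fundamental domain $F\subseteq\rho C$ satisfies $F+\ZZ^d=\RR^d$, hence $\rho C+\ZZ^d=\RR^d$, giving $\mu(C)\le\rho$. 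So $(1),(2),(3)$ are already seen to be equivalent.

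It remains to fold in $(4)$. For $\neg(4)\Rightarrow\neg(2)$: if there is a nonempty open set $W$ with $\rho C\cap(W+\ZZ^d)=\emptyset$, pick any $\pp\in W$; then no point of the coset $\pp+\ZZ^d$ lies in $\rho C$, so $\rho C$ misses a whole coset, contradicting $(2)$. For the converse $\neg(2)\Rightarrow\neg(4)$, suppose $\rho C$ misses some coset $\pp+\ZZ^d$. Here is where the only real content lies: I want to upgrade the single missed point to a missed \emph{open} set. Since $\rho C$ is compact (because $C$ is a convex body, hence compact), its complement is open; moreover $\rho C + \ZZ^d$ is a closed set (it is a locally finite union of translates of the compact set $\rho C$, as $\ZZ^d$ is a discrete, proper family of translations), so $\RR^d\setminus(\rho C+\ZZ^d)$ is open and contains $\pp$. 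Taking $W$ to be any open ball around $\pp$ inside this complement gives $W+\ZZ^d$ disjoint from $\rho C$, establishing $\neg(4)$.

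The main obstacle — really the only point requiring care — is the topological step just described: $\rho C+\ZZ^d$ is closed, so the complement of a missed point is a genuine open neighborhood and not merely a single point. This rests on $\rho C$ being compact together with the local finiteness of the translation family $\{\rho C + \zz : \zz\in\ZZ^d\}$; for a general (noncompact) convex body the union need not be closed and the equivalence of $(2)$ and $(4)$ could fail, which is precisely why the standing assumption that convex bodies are compact is used. Everything else is a matter of unwinding the definition of fundamental domain and of covering radius, and I would present it as such without belaboring the set-theoretic selection in $(2)\Rightarrow(3)$.
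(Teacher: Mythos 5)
Your proof is correct and follows essentially the same route as the paper: the equivalence of (1), (2), (3) is definitional, and the substantive step is folding in (4) by using that $\rho C + \ZZ^d$ is closed (compactness of $\rho C$ plus local finiteness of the $\ZZ^d$-translates) so that a missed point can be thickened to a missed open set. Your write-up is, if anything, slightly more careful than the paper's, which sets $W \coloneqq \RR^d \setminus \rho C$ where the intended obstruction set is clearly $\RR^d \setminus (\rho C + \ZZ^d)$, as you correctly use.
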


\begin{proof}
    The implications $(1)\Leftrightarrow(2)\Leftrightarrow(3)$ and $(2)\Rightarrow(4)$ are obvious.
    Let us prove $(4)\Rightarrow(1)$.
    $\mu(C) > \rho$ implies there is a $\pp \not\in \rho C + \ZZ^d$.
    Since $\rho C + \ZZ^d$ is closed, its complement $W \coloneqq \RR^d\setminus \rho C$ is open and
    \[
    \rho C \cap (W + \ZZ^d) \subset
    (\rho C \cap (\RR^d\setminus \rho C)) + \ZZ^d = \emptyset.
    \qedhere
    \]
\end{proof}

\medskip

Since tight instances of the sLR Conjeture correspond in Proposition~\ref{prop:zono_sLRZ} to sLR zonotopes $Z$ with $\mu(Z)$ exactly equal to $\tfrac{n-1}{n+1}$, the following result from~\cite{Malikiosis2024LinExpCheckingLRC} says that sLR $3$-zonotopes of volume at least 196 not only satisfy the sLR conjecture (Theorem~\ref{thm:thm_e}) but also that they are never tight:%
\footnote{This result is not in the currently public version of~\cite{Malikiosis2024LinExpCheckingLRC} (arXiv.v1) but it is to be included in a forthcoming version.}

\begin{theorem}[\cite{Malikiosis2024LinExpCheckingLRC}]
\label{thm:no_big_tight}
Every sLR $3$-zonotope $Z$ of volume at least $196$ has $\mu(Z) < \frac35$.
\end{theorem}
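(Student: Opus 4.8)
The plan is to transfer the problem to the Diophantine side, prove it there, and translate back. Via the dictionary behind Proposition~\ref{prop:zono_sLRZ} — concretely, $\mu(Z)=1-2M(\vv)$ where $M(\vv):=\min_{\mathbf s\in\RR^n}\max_{t\in\RR}\min_{i\in[n]}\dist(v_it+s_i,\ZZ)$ — the assertion $\mu(Z)<\tfrac35$ is the same as $M(\vv)>\tfrac15$. First I would reduce to $\gcd(v_1,\dots,v_4)=1$: when the gcd is $m\ge2$ the zonotope $Z$ arises from the coprime zonotope $Z'$ of volume vector $\vv/m$ (Proposition~\ref{prop:lSRZ_by_volume_vectors}) by scaling one coordinate by $m$, and passing to an $m$-times-finer lattice in that coordinate can only decrease the covering radius, so $\mu(Z)\le\mu(Z')$; this settles the case $\tfrac1m(v_1+\dots+v_4)\ge196$ by the coprime statement, and the finitely many remaining small vectors need a separate finite verification. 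So assume $\gcd(\vv)=1$, $v_1<v_2<v_3<v_4$ and $v_1+v_2+v_3+v_4\ge196$. Since $g(\mathbf s):=\max_t\min_i\dist(v_it+s_i,\ZZ)$ is continuous and the shift torus is compact, it suffices to produce, for each fixed $\mathbf s$, a single time $t$ with $\dist(v_it+s_i,\ZZ)>\tfrac15$ for all four $i$.

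The workhorse will be a sweeping argument with the fastest runner. If $J$ is a time interval on whose interior runners $1,2,3$ all stay strictly farther than $\tfrac15$ from the origin and $v_4|J|>\tfrac25$, then over $J$ runner $4$ traces an arc of length exceeding $\tfrac25$, which cannot be contained in a single component of the non-lonely set $[-\tfrac15,\tfrac15]+\ZZ$ (each of length $\tfrac25$), so at some interior point of $J$ runner $4$ is also lonely and there all four are. To build such a $J$ I would invoke the shifted Lonely Runner Conjecture for the three velocities $v_1<v_2<v_3$ (which is known): there is $t_0$ with each of runners $1,2,3$ at distance $\ge\tfrac14$ from the origin, and on $J:=[t_0-\tfrac1{20v_3},\,t_0+\tfrac1{20v_3}]$ all three stay at distance $\ge\tfrac15$, strictly on the interior. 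Since $|J|=\tfrac1{10v_3}$, this already disposes of every $\vv$ with $v_4>4v_3$; applying the same idea to the pair $\{v_1,v_2\}$ with the sharp two-runner bound $\tfrac13$, and then sweeping runner $3$ inside the longer window it produces before sweeping runner $4$, iterates the trick and handles all $\vv$ whose velocities are sufficiently spread out.

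The genuinely harder regime is the balanced one: no velocity dominates, yet $v_1+v_2+v_3+v_4\ge196$, so all $v_i$ are large. Here I would argue by measure. With $B_i=\{t\in[0,1):\dist(v_it+s_i,\ZZ)\le\tfrac15\}$ (so $\lambda(B_i)=\tfrac25$), inclusion–exclusion gives the exact identity
\[
\lambda\Bigl(\bigcap_i B_i^c\Bigr)=\sum_{S\subseteq[4]}(-1)^{|S|}\,\lambda\Bigl(\bigcap_{i\in S}B_i\Bigr),
\]
and each term on the right is a normalized length of a closed geodesic inside a small box; when the relevant gcds are small it is within an $O(1/v_i)$ error of its independent value $(\tfrac25)^{|S|}$, so the sum is close to $(\tfrac35)^4=\tfrac{81}{625}>0$ and a lonely time exists. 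The exceptional sub-cases are the divisibility relations $v_i\mid v_j$ (or small $\gcd(v_i,v_j)$), which concentrate those intersections: if the divisor is small it generates long lonely intervals and one returns to the sweeping argument, while if it is large then $v_j\ge2v_i$ is itself dominant and sweeping applies again.

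The main obstacle is exactly this balanced regime, and within it the bookkeeping that makes the number $196$ appear: each mechanism — sweeping by a dominant runner, the recursive spread-out reduction, and the second-moment estimate — is effective only on part of the parameter space, with thresholds depending on the $v_i$ and on the divisibility pattern, and one must check that their union covers every admissible $\vv$ with $v_1+v_2+v_3+v_4\ge196$. Optimizing the error terms of the measure estimate against the window lengths in the sweeping step, while tracking all divisibility cases, is where the real work lies. A cleaner but still delicate alternative is to run the argument zonotopally, via a fibration inequality bounding $\mu(Z)$ in terms of $\mu(\bar Z)$ — the covering radius of the projection of $Z$ along the generator of largest volume, a lattice $2$-zonotope, for which covering radii are completely understood — together with a one-dimensional fiber correction that again degenerates only in the balanced case.
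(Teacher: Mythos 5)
The paper does not contain a proof of Theorem~\ref{thm:no_big_tight}; it is cited verbatim from~\cite{Malikiosis2024LinExpCheckingLRC}, and a footnote in the paper explicitly says the result is not even in the publicly available version of that reference yet. So there is no internal proof for your proposal to be compared against, and any agreement or disagreement with ``the paper's approach'' is unverifiable from this source.

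Taken on its own, your proposal is a programme rather than a proof, and you concede this yourself: the balanced regime is labelled ``where the real work lies'' and is left open, yet that is precisely the regime where the threshold $196$ must be extracted, so nothing quantitative is actually established. Two steps, beyond that explicit admission, do not go through as written. The inclusion--exclusion step claims each $\lambda\bigl(\bigcap_{i\in S}B_i\bigr)$ is within $O(1/v_i)$ of $(\tfrac25)^{|S|}$ ``when the relevant gcds are small,'' but this is not true as stated: a relation such as $v_j=2v_i$ keeps $\lambda(B_i\cap B_j)$ bounded away from $(\tfrac25)^2$ no matter how large the $v_i$ are, and there is a whole range of intermediate gcds, and of three- and four-term linear relations among the $v_i$, that spoil near-independence without putting any runner in a ``dominant'' position that the sweeping argument can exploit (in particular $v_j\ge 2v_i$ does not by itself produce the length-$\frac{2}{5v_j}$ window of simultaneous loneliness for the other three that sweeping requires). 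Second, the $\gcd>1$ reduction only gives $\mu(Z)\le\mu(Z')$, not a strict inequality; for the three coprime tight vectors $\vv'$ with $\mu(Z')=\tfrac35$ one would still need to prove the strict bound for the infinitely many scalings $m\vv'$ with $m\sum v'_i\ge 196$, and the ``finitely many remaining small vectors'' you defer to a ``separate finite verification'' are essentially the entire finite computation this theorem is meant to make possible, so the reduction is close to circular. Until the balanced-regime estimate is made explicit enough to yield a concrete constant, the divisibility/gcd cases are actually enumerated and closed, and the iterated sweeping is quantified (the sub-window that a swept runner leaves lonely is in general disconnected, which weakens the recursion), the argument does not establish the theorem.
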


\subsection{The denominator of the covering radius}
\label{subsec:cov-radius-denominator}

It is well-known and easy to show that the covering radius of a rational polytope is rational (see, e.g.,~\cite[Proposition 5.1]{Kannan1992LatTranslatesPolytope}).
In this section we prove an explicit bound for its denominator in terms of the defining equations, where the denominator of a rational number $\rho$ is defined as the minimum positive integer $s$ such that $s\rho$ is an integer.
Having a bound for the denominator allows our algorithms to certify an exact upper bound for $\mu(P)$ from an approximate one, as follows.

\begin{proposition}
\label{prop:cov_radius_safe_interval}
    Let $P$ be a rational polytope and let $D\in \NN$ be an upper bound for the denominator of $\mu(P)$.
    (For example, but not necessarily, a bound obtained by Corollary~\ref{cor:cov_radius_size_bound} below).
    Let $\rho= r/s$ with $r,s\in \ZZ$ and $s>0$.
    Then, the following equivalences hold:
    \begin{enumerate}
        \item $\mu(P) \le \rho$ if and only if $\mu(P) < \rho + \frac{1}{sD}$.
        \item $\mu(P) \ge \rho$ if and only if $\mu(P) > \rho - \frac{1}{sD}$.
    \end{enumerate}
\end{proposition}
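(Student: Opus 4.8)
The plan is to reduce both statements to the single fact that $\mu(P)$ is a rational number whose denominator divides some integer dividing $D$ — equivalently, that $\mu(P)$ lies in the discrete set $\tfrac1D\ZZ$. Once we know this, the claim is an elementary observation about how finely points of $\tfrac1D\ZZ$ can approach the rational $\rho = r/s$: the closest element of $\tfrac1D\ZZ$ to $\rho$ that is strictly less than $\rho$ (if $\rho\notin\tfrac1D\ZZ$) is at distance at least $\tfrac1D\cdot\tfrac1s = \tfrac1{sD}$, since $\rho$ has denominator dividing $s$ and any value in $\tfrac1D\ZZ$ has denominator dividing $D$, so a common denominator is $sD$ and two distinct such rationals differ by at least $\tfrac1{sD}$.

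Concretely, for part (1): the forward implication is trivial, since $\rho < \rho + \tfrac1{sD}$. For the converse, suppose $\mu(P) < \rho + \tfrac1{sD}$ but, for contradiction, $\mu(P) > \rho$. Write $\mu(P) = a/D'$ where $D'$ is the denominator of $\mu(P)$; by hypothesis $D'\mid D$, so $\mu(P)\in\tfrac1D\ZZ$, say $\mu(P)=m/D$ with $m\in\ZZ$. Then $0 < \mu(P)-\rho = \tfrac{m}{D}-\tfrac{r}{s} = \tfrac{ms-rD}{sD}$, and the numerator $ms-rD$ is a positive integer, hence $\mu(P)-\rho \ge \tfrac1{sD}$, contradicting $\mu(P) < \rho + \tfrac1{sD}$. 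Therefore $\mu(P)\le\rho$. Part (2) is entirely symmetric: if $\mu(P) > \rho - \tfrac1{sD}$ and $\mu(P) < \rho$, the same computation gives $\rho - \mu(P) = \tfrac{rD-ms}{sD}$ with positive integer numerator, hence $\rho-\mu(P)\ge\tfrac1{sD}$, a contradiction; and the forward direction is again immediate.

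The only substantive ingredient is the rationality of $\mu(P)$ with a controlled denominator, and this is exactly what is supplied: the rationality is classical (cited in the excerpt), and the hypothesis of the proposition hands us the bound $D$ on the denominator directly, so there is essentially no obstacle — the proof is a two-line arithmetic argument once the discreteness of the set of possible values of $\mu(P)$ is invoked. If one wanted, one could also phrase it uniformly by noting that the map $x\mapsto$ (is $x\le\rho$?) cannot change its truth value on the gap $(\rho,\rho+\tfrac1{sD})$ because that open interval contains no point of $\tfrac1D\ZZ$, and likewise $(\rho-\tfrac1{sD},\rho)$ contains none; but the case analysis above is cleaner to write and leaves nothing to check beyond the single inequality "a positive integer is at least $1$."
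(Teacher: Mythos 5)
There is a genuine (though easily repairable) gap: you misread the hypothesis. The proposition assumes only that $D$ is an \emph{upper bound} for the denominator of $\mu(P)$, i.e.\ that $\mu(P)=r'/s'$ in lowest terms with $s'\le D$. You instead assert ``by hypothesis $D'\mid D$, so $\mu(P)\in\tfrac1D\ZZ$,'' and your entire computation rests on writing $\mu(P)=m/D$. That step is unjustified: the hypothesis gives $s'\le D$, not $s'\mid D$, and indeed the bound of Corollary~\ref{cor:cov_radius_size_bound} is a $\max$ of several determinants, so there is no reason to expect divisibility. (Concretely, $\mu(P)=1/3$ with $D=4$ satisfies the hypothesis but $1/3\notin\tfrac14\ZZ$.)

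The fix is small and brings you to essentially the paper's argument. Keep $\mu(P)=r'/s'$ with $s'\le D$ and, assuming $\mu(P)\ne\rho$, compute directly
\[
\left|\mu(P)-\rho\right|
= \left|\frac{r'}{s'}-\frac{r}{s}\right|
= \frac{|r's-rs'|}{s's}
\ge \frac{1}{s's}
\ge \frac{1}{sD},
\]
using only that the numerator is a nonzero integer and $s'\le D$. This yields the trichotomy $\mu(P)=\rho$, or $\mu(P)\ge\rho+\tfrac1{sD}$, or $\mu(P)\le\rho-\tfrac1{sD}$, from which both equivalences follow. Your structural plan (trivial forward direction, contradiction for the converse, discreteness of possible values of $\mu(P)$) is the same as the paper's; only the specific discreteness claim ``$\mu(P)\in\tfrac1D\ZZ$'' needs to be replaced by the weaker, correct statement that $\mu(P)$ has denominator $\le D$.
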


\begin{proof}
    One direction is obvious in both cases.
    For the other one, we know that $\mu(P) = \tfrac{r'}{s'}$ for integers $r',s'$ with $0< s'\le D$.
    Assuming $\tfrac{r'}{s'} \ne \tfrac{r}{s}$ we have that
    \[
        \abs{\mu(P) - \rho} = \abs{\tfrac{r'}{s'} - \tfrac{r}{s}} = \abs{\tfrac{r' s-rs'}{s's}} \ge \tfrac1{ss'} \ge \tfrac1{sD}.
    \]
    Hence, either $\mu(P) = \rho$, $\mu(P) \ge \rho + \frac1{sD}$ or $\mu(P) \le \rho - \frac1{sD}$.
\end{proof}

Our bound for the denominator uses the concept of last covered point.

\begin{definition}[Last covered point~\cite{CodenottiSantosSchymura, Cslovjecsek2022CovRadiusPolytope}]
    Let $C \subseteq \RR^d$ be a convex body.
    A \emph{last covered point} for $C$ is any $\pp \in \RR^d$ with
    $\pp \notin (\mu(C)\,C)^{\circ} + \ZZ^d$, where $C^\circ$ denotes the interior of $C$.
\end{definition}

Since $\mu(C)$ is invariant under translation,
we may assume $\oo \in C^\circ$ without loss of generality.
This simplifies some arguments because it implies that $\rho C +\qq$ is monotone increasing in $\rho$;
once a point is covered by some copy $\rho C + \qq$, it is also covered by $\rho' C + \qq$ for any $\rho' > \rho$.

The set of last covered points is always non-empty.
Indeed, assuming without loss of generality $\oo\in C^\circ$, for each point $\pp\in \RR^d$ let $\rho_{\pp} = \inf (\{\rho\in \RR_{\geq 0} : \pp \in (\rho C)^{\circ} + \ZZ^d\})$ be the \emph{covering time} of the point $\pp$.
The covering time is invariant by integer translations, and continuous.
Since $\RR^d / \ZZ^d$ is compact, there must be points $\pp$ with maximal covering time, i.e.\ $\rho_{\pp}=\mu(C)$.
These are precisely the last covered points.

In the rest of the section, $P\subseteq \RR^d$ is a polytope defined by the system of inequalities $Ax\le b$ for some matrix $A\in \RR^{m\times d}$ and vector $b \in \RR^m$.
For an element $i\in [m]$ or subset $I\subset [m]$, $A_i$, $b_i$, $A_I$, $b_I$, etc.
denote the restriction of a matrix or vector to the rows labeled by $i$ or $I$.

\begin{lemma}[\protect{\cite[Lemma 3.1]{Cslovjecsek2022CovRadiusPolytope}}]
\label{lemma:last-covered}
    Let $P=\{Ax\le b\}$ be a polytope with non-empty interior.
    Then, there is
    \begin{itemize}
        \item a subset $R\subset [m]$ of rows with $|R|=d+1$ and $\det(A_R|b_R) \ne 0$, and
        \item a lattice point $\qq_i\in \ZZ^d$ for each $i\in R$,
    \end{itemize}
    such that the system
    \begin{equation}
    \label{eq:last-covered}
      A_R (\xx-\qq_i) = t \,b_R
    \end{equation}
    has a unique solution in $\RR^{d+1}$ and this solution is of the form $(\xx,t) = (\pp,\mu(P))$, where $\pp$ is a last covered point.
\end{lemma}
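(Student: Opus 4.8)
\emph{Sketch of proof.} The target is to exhibit a last covered point $\pp$ together with $d+1$ rows of $Ax\le b$ that are active at $\pp$ for appropriate integer translates and whose augmented rows $(A_i,b_i)$ are linearly independent; these indices will be $R$, independence is $\det(A_R\mid b_R)\ne 0$, and then~\eqref{eq:last-covered} is a square invertible system whose solution is $(\pp,\mu(P))$. First I would reduce to the case $\oo\in P^{\circ}$: all of $\mu(P)$, the notion of last covered point, $\det(A_R\mid b_R)$, and the solvability of~\eqref{eq:last-covered} are invariant under translating $P$ by some $\cc$ — for the last two because $b\mapsto b+A\cc$ amounts to right‑multiplying $(A_R\mid b_R)$ by the determinant‑$1$ matrix $\bigl(\begin{smallmatrix}I&\cc\\ 0&1\end{smallmatrix}\bigr)$ — so we may assume $b>0$. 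Then $\pp\in\mu P+\qq\iff A_i(\pp-\qq)\le\mu b_i\ \forall i$, the covering time is $\rho_{\pp}=\min_{\qq\in\ZZ^d}\max_{i\in[m]}\tfrac{A_i(\pp-\qq)}{b_i}$, and, with $\mu\coloneqq\mu(P)$, the last covered points are precisely those with $\rho_{\pp}=\mu$. For such a $\pp$ write $Q(\pp)=\{\qq\in\ZZ^d:\pp\in\mu P+\qq\}$ (finite, nonempty), $I_{\qq}(\pp)=\{i:A_i(\pp-\qq)=\mu b_i\}$ for $\qq\in Q(\pp)$ (nonempty, since $\pp\in\partial(\mu P+\qq)$), and $I^{*}(\pp)=\bigcup_{\qq\in Q(\pp)}I_{\qq}(\pp)$. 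If $\{(A_i,b_i):i\in I^{*}(\pp)\}$ spans $\RR^{d+1}$ we are done: take $R\subseteq I^{*}(\pp)$ indexing a basis and, for each $i\in R$, pick $\qq_i\in Q(\pp)$ with $i\in I_{\qq_i}(\pp)$; then $(\pp,\mu)$ solves~\eqref{eq:last-covered}, uniquely because the coefficient matrix $(A_R\mid -b_R)$ is invertible.

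So the real content is to find \emph{some} last covered point with this spanning property. I would pick a last covered point $\pp$ maximizing $\dim\operatorname{span}\{(A_i,b_i):i\in I^{*}(\pp)\}$ (possible: finitely many possible dimensions, and last covered points exist) and show this span is $\RR^{d+1}$. If not, pick $(\mathbf d,-c)\ne\oo$ orthogonal to all $(A_i,b_i)$ with $i\in I^{*}(\pp)$, i.e.\ $\tfrac{A_i\mathbf d}{b_i}=c$ for those $i$. If $c\ne 0$, normalize to $c>0$; then for small $\varepsilon>0$ every $\qq\in Q(\pp)$ covers $\pp+\varepsilon\mathbf d$ only at time $\mu+\varepsilon c>\mu$ (active facets rise at rate $c$, inactive ones stay strictly below), while the finitely many other relevant translates still cover it only at times $>\mu$; hence $\rho_{\pp+\varepsilon\mathbf d}>\mu$, contradicting maximality of $\mu$. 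So $c=0$, whence $\mathbf d\ne\oo$ and $A_i\mathbf d=0$ for all $i\in I^{*}(\pp)$.

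The remaining and most delicate step is a sliding argument along $\mathbf d$. Since $A_i\mathbf d=0$ on $I_{\qq}(\pp)$, each such facet stays active along the line $\pp+s\mathbf d$, so $\pp+s\mathbf d\in\mu P+\qq$ exactly for $s\in[\alpha_{\qq},\beta_{\qq}]\ni 0$, with $\alpha_{\qq}>-\infty$ and $\beta_{\qq}<\infty$ because $P$ is bounded (no nonzero $\mathbf d$ has $A\mathbf d\le 0$). One checks $\pp+s\mathbf d$ remains last covered for small $s$, so (the set of last covered points being closed) there is a largest $s^{*}\in(0,\infty]$ with $\pp+s\mathbf d$ last covered for all $s\in[0,s^{*}]$. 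Put $s'=\min\bigl(s^{*},\ \min_{\qq\in Q(\pp)}\beta_{\qq}\bigr)\in(0,\infty)$ and $\pp'=\pp+s'\mathbf d$. Then $\pp'$ is last covered, $Q(\pp)\subseteq Q(\pp')$, $I_{\qq}(\pp')\supseteq I_{\qq}(\pp)$ for $\qq\in Q(\pp)$, hence $I^{*}(\pp')\supseteq I^{*}(\pp)$; and at $\pp'$ a new inequality $i'$ with $A_{i'}\mathbf d\ne 0$ becomes active — either the facet of some $\mu P+\qq_0$ first reached at $s=\min_{\qq}\beta_{\qq}$, or, if $s'=s^{*}$ is smaller, the boundary facet of the translate into whose interior $\pp+s\mathbf d$ moves just past $s^{*}$. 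Because $A_{i'}\mathbf d\ne 0$ while $A_i\mathbf d=0$ on $I^{*}(\pp)$, the row $(A_{i'},b_{i'})$ is not in $\operatorname{span}\{(A_i,b_i):i\in I^{*}(\pp)\}$, so $\dim\operatorname{span}\{(A_i,b_i):i\in I^{*}(\pp')\}$ is strictly larger, contradicting maximality — which completes the proof.

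The step I expect to fight with is the sliding argument: proving $\pp+s\mathbf d$ stays last covered up to $s'$, pinning down exactly which inequality first becomes tight there, and verifying it enlarges the span. Boundedness of $P$ (so that the intervals $[\alpha_{\qq},\beta_{\qq}]$ are finite and only finitely many translates are ever in play near a point) and closedness of the set of last covered points are the facts that make it run.
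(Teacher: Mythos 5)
Your proposal is correct, and in spirit it follows the same strategy as the paper: reduce to $\oo\in P^\circ$, pick an extremal last covered point, and establish full rank of the augmented system by combining a perturbation (Farkas-type) argument with a sliding argument. The organization is genuinely different, though. The paper works in two stages: it chooses a last covered point $\pp$ with the index set $R_\pp$ of active facet-constraints maximal under inclusion, shows $L_\pp=\{\pp\}$ by sliding within $L_\pp$ (this gives $\operatorname{rk} A_{R_\pp}=d$), and then separately derives a \emph{positive} linear dependence among the $A_i$ via Farkas' lemma, which kills the possibility of a solution with $t\ne\mu(P)$ and hence forces $\operatorname{rk}(A_{R_\pp}\mid b_{R_\pp})=d+1$. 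You instead maximize $\dim\operatorname{span}\{(A_i,b_i):i\in I^*(\pp)\}$ directly and prove it must equal $d+1$ by contradiction, with the orthogonal vector $(\mathbf{d},-c)$ splitting into the $c\ne 0$ case (your analogue of the Farkas perturbation: $A_i\mathbf{d}=cb_i>0$ on the active set, so $\pp+\varepsilon\mathbf{d}$ is uncovered) and the $c=0$ case (your analogue of the slide in $L_\pp$). Both proofs lean on the same facts in the same places — boundedness of $P$ so that the slide terminates at a finite $\beta_\qq$, closedness of the set of last covered points, and the finiteness of relevant translates near a given point — and your sliding step is sound, in particular the observation that the new active constraint $i'$ has $A_{i'}\mathbf{d}\ne0$ and therefore cannot lie in the old span, since that span is orthogonal to $(\mathbf{d},0)$. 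The trade-off is that your single maximization is conceptually cleaner (it targets exactly the rank condition that is needed), whereas the paper's two-stage version keeps the linear algebra and the geometry separate and disposes of the sliding argument in one sentence; yours makes it fully explicit at the cost of the $\alpha_\qq,\beta_\qq,s^*$ bookkeeping you rightly flagged as the delicate part.
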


\begin{proof}
  Let $\rho=\mu(P)$.
  As above, assume without loss of generality that $\oo\in P^\circ$, or
  equivalently, that $b_i > 0$ for all $i\in [m]$.

  Each facet of a translated polytope $\{\rho P+\qq: \qq \in \ZZ^d\}$ is labeled by a point $\qq\in \ZZ^d$ and an index $i\in [m]$.
  For each last covered point $\pp$, let $R_\pp$ be the set of indices $i$ such that $\pp$ lies in the $i$-th facet of $\rho P+\qq_i$ for some $\qq_i\in \ZZ^d$.
  Observe that $\pp$ lies in the affine subspace $L_\pp\coloneqq \cap_{i\in R_{\pp}}\{\xx\in \RR^d| A_i (\xx -\qq_i) = \rho b_i \}$, since
  $A_i (\xx -\qq_i) = \rho b_i$ is the facet equation for the $i$-th facet of $\rho P+\qq_i$.
  
  Choose a last covered point $\pp$ so that $R_\pp$ is maximal.
  Maximality implies that $L_\pp=\{\pp\}$, since otherwise, moving the point within $L_\pp$ until an extra facet of some $\{\rho P+\qq: \qq \in \ZZ^d\}$ is met (which happens at the latest when we
  are about to leave a certain $\rho P+\qq_i$ containing $\pp$), gives us a last covered point $\pp'$ with $R_{\pp'}$ strictly containing $R_\pp$.
  
  The fact that $L_\pp=\{\pp\}$ implies that the matrix $A_{R_\pp}$ consisting of rows used in $R_\pp$ has full rank, equal to $d$.

  Now, observe that the vectors $A_i$ for $i\in R_\pp$ must have a positive linear dependence.
  Otherwise, by (one of many versions of) Farkas' lemma, there is a vector $\vv\in \RR^d$ such that $\langle A_i, \vv\rangle >0$ for all $i\in R_\pp$.
  Then, $\pp$ would not be last covered, as $\pp+\varepsilon \vv$ would not be covered by any $P+\qq_i$.
  Locally, these are the only translated copies of $P$ that could potentially cover any point in a neighborhood of $\pp$ because $\pp$ cannot be in the interior of any translated copy of $P$.
  Therefore no translated copy of $P$ covers $\pp+\varepsilon \vv$, so $\pp+\varepsilon \vv$ would have larger covering time than $\pp$, contradicting the assumption of $\pp$ being last covered.

  The positive linear dependence among the vectors $\{A_i : i\in R_\pp\}$ implies that the system of equalities 
  \begin{align}
  \label{eq:system}
  A_i \,(\xx - \qq_i) = t b_i, \quad i \in R_\pp,
  \end{align}
  where $t$ is considered an extra variable, has no solution with $t\ne \rho$.
  Indeed, let $\lambda_i\in \RR_{\geq 0}$ for $i \in R_\pp$ be the coefficients of the linear dependence.
  Then,
  \begin{align}
    A_i\,(\xx-\qq_i) &= t b_i, & \forall i \in R_\pp,\nonumber \qquad \Rightarrow\\
    A_i\,(\xx-\pp) + A_i\,(\pp-\qq_i) &= \rho b_i + (t-\rho) b_i & \forall i \in R_\pp, \nonumber  \qquad \Rightarrow\\
    A_i\,(\xx-\pp) &=  (t-\rho) b_i & \forall i \in R_\pp, \nonumber  \qquad \Rightarrow
  \end{align}
  \begin{align}
   0 = \sum_{i\in R_\pp} \lambda_i A_i(\xx-\pp) & = \sum_{i\in R_\pp} \lambda_i (t-\rho) b_i % \nonumber  \\
    = (t-\rho) \left( \sum_{i\in R_\pp} \lambda_i b_i\right). \nonumber
  \end{align}

  But since the $\lambda_i$ are non-negative (and not all of them are zero) and the $b_i$ are positive, then it must be that $t=\rho$.

  Thus, the system~\eqref{eq:system} has only solutions of the form $(\xx, \rho)$.
  Since $A_{R_\pp}$ has rank $d$, it only has the solution $(\pp,\rho)$.
  This implies that the matrix $(A_{R_\pp} |$ $-b_{R_\pp})$ has rank $d+1$.
  Choose as $R$ any basis for the rows of $(A_{R_\pp} | b_{R_\pp})$.
\end{proof}

\begin{corollary}
    \label{cor:cov_radius_size_bound}
    Let $P$ be a rational polytope described by $A\,\xx\le b$ with $A\in \ZZ^{m\times d}$ and $b\in \ZZ^m$.
    Then $\mu(P)$ is a rational number and its denominator is bounded by
        \[\max_{R\in {[m]\choose d+1}} \abs{\det\left(A_R|b_R\right)}.\]
\end{corollary}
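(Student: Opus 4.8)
The plan is to derive the corollary directly from Lemma~\ref{lemma:last-covered}, which does almost all the work. That lemma produces a subset $R\subset[m]$ with $|R|=d+1$, a family of lattice points $\qq_i\in\ZZ^d$ ($i\in R$), and tells us that the system~\eqref{eq:last-covered}, namely $A_R(\xx-\qq_i)=t\,b_R$ in the $d+1$ unknowns $(\xx,t)$, has a unique solution, which is $(\pp,\mu(P))$ for a last covered point $\pp$. So $\mu(P)$ is rational (being the unique solution of a rational linear system), and all that remains is to extract a denominator bound from Cramer's rule.

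First I would rewrite the system~\eqref{eq:last-covered} as a genuine $(d+1)\times(d+1)$ linear system in the variable vector $(\xx,t)\in\RR^{d+1}$. Expanding $A_R(\xx-\qq_i)=t\,b_R$ gives, for each $i\in R$,
\[
A_i\,\xx - b_i\, t = A_i\,\qq_i,
\]
so the coefficient matrix is exactly $M\coloneqq (A_R \mid -b_R)$, an integer matrix, and the right-hand side is the integer vector $\cc$ with entries $c_i = A_i\,\qq_i = \langle A_i,\qq_i\rangle$. By the lemma this system has a unique solution, so $\det M\ne 0$; and $\abs{\det M}=\abs{\det(A_R\mid b_R)}$ since negating a column only flips the sign of the determinant. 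By Cramer's rule, the last coordinate $t=\mu(P)$ equals $\det(M')/\det(M)$, where $M'$ is $M$ with its last column replaced by $\cc$; both $\det(M')$ and $\det(M)$ are integers. Hence $\mu(P)$ is rational with denominator dividing $\abs{\det M}=\abs{\det(A_R\mid b_R)}$, and in particular its denominator is at most $\abs{\det(A_R\mid b_R)}\le \max_{R\in\binom{[m]}{d+1}}\abs{\det(A_R\mid b_R)}$, as claimed.

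There is essentially no hard part here; the only thing to be slightly careful about is the bookkeeping between the two determinants $\det(A_R\mid b_R)$ (the one appearing in the statement, with $b_R$ as a column) and $\det(A_R\mid -b_R)$ (the one that is literally the coefficient matrix of the rewritten system). Since these differ only by the sign coming from scaling one column by $-1$, their absolute values agree, so the bound is stated correctly with $b_R$. One should also note explicitly that the denominator of a rational number $p/q$ written in possibly non-reduced form $q$ always divides $q$, so bounding by $\abs{\det M}$ (rather than by the reduced denominator) is legitimate; this matches the definition of denominator given just before Proposition~\ref{prop:cov_radius_safe_interval}. Finally, the case $m<d+1$ cannot occur since $P$ has non-empty interior and is bounded (it has at least $d+1$ facets), so the maximum over $\binom{[m]}{d+1}$ is taken over a non-empty set.
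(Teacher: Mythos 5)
Your proposal is correct and is precisely the paper's approach: the paper's entire proof is the sentence ``Apply Cramer's rule to the variable $t$ in the system of Lemma~\ref{lemma:last-covered}.'' You have simply spelled out the bookkeeping (rewriting the system as a $(d+1)\times(d+1)$ integer linear system with coefficient matrix $(A_R\mid -b_R)$, observing $\abs{\det(A_R\mid -b_R)}=\abs{\det(A_R\mid b_R)}$, and noting the denominator of a ratio of integers divides the denominator before reduction), all of which is accurate and matches what the paper leaves implicit.
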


\begin{proof}
    Apply Cramer's rule to the variable $t$ in the system of Lemma~\ref{lemma:last-covered}.
\end{proof}

\begin{remark}
Cauchy-Binet implies that $\sqrt{\det\left((A|b)^T(A|b)\right)}$ is a more compact (but worse) bound.
Indeed:
    \[
        \max_{R\in {[m]\choose d+1}} \det(A_R|b_R)^2 \leq  \sum_{R\in {[m]\choose d+1}} \det(A_R|b_R)^2 =
        \det\left((A|b)^T(A|b)\right).
    \]
\end{remark}

\section{An algorithm to bound the covering radius of a rational polytope}
\label{sec:algorithms}

\subsection{Enumeration, construction, and preprocessing of sLR zonotopes}
\label{subsec:enumerating_slrzs}

According to Proposition~\ref{prop:zono_sLRZ}, to obtain Theorem~\ref{thm:main} and Corollary~\ref{coro:main}, we need to enumerate sLR zonotopes up to volume 195 and bound their covering radius by $3/5$.
We first construct the list of possible volume vectors, that is, the $4$-tuples $v=(v_1, \dots,v_4)\in \ZZ^4$ with $0<v_1<v_2<v_3<v_4$ and $\gcd(v_1,v_2,v_3,v_4)=1$.
Enumerating such $4$-tuples is algorithmically trivial and took less than a second in a standard PC:

\begin{proposition}
    \label{prop:volume_vector_count}
    There are exactly $2\,133\,561$ vectors $(v_1, v_2, v_3, v_4) \in \ZZ^4$ with $1 \le v_1 < v_2 < v_3 < v_4$, $\gcd(v_1, v_2, v_3, v_4) = 1$ and $\sum v_i \le 195$.
\end{proposition}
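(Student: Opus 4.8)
The statement is a concrete finite count, so the plan is to reduce it to a short, easily checkable computation. For a general bound $N$, let
\[
f(N) := \#\{(v_1,v_2,v_3,v_4)\in\ZZ^4 : 1\le v_1<v_2<v_3<v_4,\ v_1+v_2+v_3+v_4\le N\},
\]
and let $g(N)$ be the same count with the extra condition $\gcd(v_1,v_2,v_3,v_4)=1$; we want $g(195)$. First I would compute $f(N)$ by the telescoping substitution $a_1=v_1$, $a_j=v_j-v_{j-1}$ for $j=2,3,4$, which is a bijection onto the tuples $(a_1,a_2,a_3,a_4)\in\ZZ_{\ge 1}^4$ with $4a_1+3a_2+2a_3+a_4\le N$; subtracting $1$ from each coordinate turns this into the number of nonnegative solutions of $4b_1+3b_2+2b_3+b_4\le N-10$. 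Equivalently, $f(N)=[x^{N-10}]\,\frac{1}{(1-x)^2(1-x^2)(1-x^3)(1-x^4)}$, a quasi-polynomial in $N$ of degree $4$ and period $12$, which is trivial to evaluate for any given $N$.

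Second, I would pass from $f$ to $g$ by Möbius inversion over the gcd. Every strictly increasing $4$-tuple of positive integers with sum $\le N$ has a well-defined gcd $d\ge 1$, and dividing by $d$ gives a bijection between such tuples of gcd exactly $d$ and primitive strictly increasing $4$-tuples with sum $\le\lfloor N/d\rfloor$ (the floor is legitimate because the sum is an integer). Hence $f(N)=\sum_{d\ge 1} g(\lfloor N/d\rfloor)$, a finite sum since $g(m)=0$ for $m<1+2+3+4=10$. Because $\lfloor\lfloor N/d\rfloor/e\rfloor=\lfloor N/(de)\rfloor$, the standard Möbius inversion applies and yields $g(N)=\sum_{d\ge 1}\mu(d)\,f(\lfloor N/d\rfloor)$.

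Finally I would evaluate at $N=195$. Only $d\le 19$ contribute (for $d\ge 20$ one has $\lfloor 195/d\rfloor<10$), so $g(195)$ is an alternating sum of at most nineteen values of $f$, each of which is a lattice-point count in a $3$-dimensional simplex; the arithmetic then gives $g(195)=2\,133\,561$. The only non-conceptual ingredient is this last explicit evaluation, which is not really an obstacle but should be double-checked: the cleanest safeguard is to also run the naive direct enumeration (four nested loops over $1\le v_1<v_2<v_3<v_4$ with $v_1+v_2+v_3+v_4\le 195$ and a $\gcd$ test), which terminates in well under a second, and confirm that both methods produce the same number. I expect the Möbius-inversion route and the brute-force route to agree, which is exactly what makes the count trustworthy.
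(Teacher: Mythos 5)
Your proposal is correct, but it is considerably more structured than what the paper actually does. The paper offers no formal proof of this proposition at all; it simply reports the number as the output of a brute-force enumeration of the $4$-tuples (which, as you note, runs in well under a second), and treats the count as a computational fact. Your plan, by contrast, gives a closed-form reduction: the substitution $a_j = v_j - v_{j-1}$ turns the strictly-increasing condition into a coin-problem count so that $f(N)$ is a coefficient of $1/\bigl((1-x)^2(1-x^2)(1-x^3)(1-x^4)\bigr)$, and the Möbius-over-gcd inversion $g(N)=\sum_{d\ge 1}\mu(d)\,f(\lfloor N/d\rfloor)$ (valid here because $f(N)=\sum_{d\ge1}g(\lfloor N/d\rfloor)$ and $\lfloor\lfloor N/d\rfloor/e\rfloor=\lfloor N/(de)\rfloor$) reduces the primitive count to nineteen evaluations of a degree-$4$ quasi-polynomial. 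This buys a human-verifiable derivation and a formula valid for all $N$, at the cost of some bookkeeping; the paper's route buys simplicity and matches how the count is actually produced in the accompanying code. Since you also propose running the naive four-nested-loop enumeration as a cross-check, your argument subsumes the paper's and adds an independent confirmation, which is exactly the right posture for a result whose content is a specific integer.
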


%\begin{algorithm}[H]
%    \caption{Enumerating strong Lonely Runner Zonotopes upto volume $V$}
%    \label{alg:enumerate_volume_vectors}
%
%    \DontPrintSemicolon
%    \SetKwInOut{Input}{Input}
%    \SetKwInOut{Output}{Output}
%
%    \Input{$V \in \ZZ_{>0}$}
%    \Output{$\textmono{volume\_vectors}$}
%
%    \SetKw{And}{and}
%
%    % Enumerate vectors
%    $\textmono{volume\_vectors} \gets \emptyset$
%    \For {$(v_1, v_2, v_3, v_4) \in \{1, \dots, V\}^4$} {
%        \uIf {$v_1 < v_2 < v_3 < v_4\ \And\ v_1 + v_2 + v_3 + v_4 \le V$} {
%            \uIf {$\gcd(v_1, v_2, v_3, v_4) = 1$} {
%                $\textmono{volume\_vectors} \gets \textmono{volume\_vectors} \cup \{(v_1, v_2, v_3, v_4)\}$\;
%            }
%        }
%    }
%    % We denote by [n] \coloneqq \{0, \dots, n-1\} \cap \ZZ
%\end{algorithm}

% With this algorithm we obtained:

To generate a representative zonotope from each volume vector $v$, we use Algorithm~\ref{alg:slrz_from_vv}, which follows the `existence' part of the proof of Proposition~\ref{prop:lSRZ_by_volume_vectors}.
Although we apply it to \emph{strong} lonely runner zonotopes, the algorithm works for arbitrary LR zonotopes, as long as the volume vector satisfies $\gcd(v_1,\dots,v_n)$=1.
That is, it needs the $v_i$ to be neither different, positive, nor ordered.

Step 1 in the algorithm creates an integer matrix $M'\in \ZZ^{(n-1)\times n}$ whose columns generate a LR zonotope with volume vector a scalar multiple of $(v_1,\dots,v_n)$.
Step 2 uses a column-wise Hermite normal form of $M'$ to construct a basis (the columns of the matrix $B$ in the algorithm) of the lattice $\Lambda$ generated by the columns of $M'$.
Observe that $\operatorname{rk}(M')=n-1$ implies that the last column of its Hermite normal form $H$ is zero, and $B$ is simply equal to $H$ without that column.

Now, $B^{-1}$ is the matrix of a linear isomorphism $\Lambda \xrightarrow{\cong} \ZZ^{n-1}$, so the columns of $B^{-1} M'$ would already be valid generators for a LR zonotope with volume vector $(v_1,\dots,v_n)$.
However, the generators obtained in this way typically have some large entries, resulting in `long and skinny' zonotopes that are poorly conditioned for computing their covering radii.
To overcome this, we preprocess the generators in step 3, by performing an LLL lattice basis reduction to the rows of $B^{-1} M'$.%
\footnote{We have implemented the LLL algorithm with $\delta=3/4$. Higher values of $\delta \in (0,1)$ would give better zonotopes, but would increase the running time.}
This produces a matrix $M$ whose columns are unimodularly equivalent to those of $B^{-1} M'$, but with smaller entries.

\begin{algorithm}
\SetAlgoHangIndent{0pt}
    \caption{Compute generators for a LR zonotope from its volume vector.}
    \label{alg:slrz_from_vv}

    \DontPrintSemicolon
    \SetKwInOut{Input}{Input}
    \SetKwInOut{Output}{Output}

     \Input{$v=(v_1, \dots, v_n)\in \ZZ_{>0}^n$, with $\gcd(v_1,\dots,v_n)=1$.}
    \Output{A matrix $M=(\uu_1 ,\dots,\uu_n)\in \ZZ^{(n-1)\times n}$ such that $\uu_1,\dots,\uu_n$ generate a LR zonotope with volume vector $v$.}

    \SetKw{And}{and}
    Let
   $
   M' \coloneqq
       \left(\begin{array}{c c c|c}
          -v_{n} &        &          & v_1    \\
                 & \ddots &          & \vdots \\
                 &        & -v_{n} & v_{n-1}
    \end{array}\right)
    \in \ZZ^{(n-1)\times n}.
    $\;
    Let $H\in \ZZ^{(n-1)\times n}$ be the column-wise Hermite normal form of $M$, and let $B \in \ZZ^{(n-1)\times (n-1)}$ consist of the first $n-1$ columns of $H$.

     Apply an LLL-reduction to the rows of $B^{-1} M'$ and let $M\in \ZZ^{(n-1)\times n}$ have as rows the resulting reduced vectors.\;
    \Return $M$.
\end{algorithm}

\medskip

For our covering radius algorithm, we need to convert the generators of the zonotope into an inequality description of it.
This, for an arbitrary zonotope $Z\subset \RR^d$ with generators $U=\{\uu_1,\dots,\uu_n\}$ is done as follows,
where we are identifying $\bigwedge^{d-1} \RR^d \cong (\RR^d)^*$ in the natural way.

\begin{proposition}
Let $Z= \tfrac12 \sum_{i=1}^n [-\uu_i,\uu_i]$ be the $\oo$-symmetric zonotope with generators $\uu_1,\dots,\uu_n$.
Then
\[
Z = \left \{\xx\in \RR^d : - {b_S} \le A_S \xx \le {b_S} : S \in \binom{[n]}{d-1} \right \},
\]
where 
\[
A_S \coloneqq \bigwedge_{i\in S} \uu_i \in (\RR^d)^*
\qquad\text{and}\qquad
b_S \coloneqq \frac{1}{2} \sum_{i=1}^n  \abs{A_S\, \uu_i}.
\]
\end{proposition}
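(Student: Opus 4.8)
The plan is to show that the polytope on the right-hand side, call it $Q$, coincides with the zonotope $Z$ by a double inclusion, using the fact that each functional $A_S = \bigwedge_{i\in S}\uu_i$ vanishes on the subspace spanned by $\{\uu_i : i \in S\}$. The key observation is that for $S \in \binom{[n]}{d-1}$, the wedge $A_S$, viewed as a linear functional on $\RR^d$, satisfies $A_S \uu_i = 0$ for all $i \in S$ (since $\uu_i \wedge \uu_i = 0$), while $A_S \uu_j$ for $j \notin S$ is (up to sign) the determinant of $\{\uu_i : i\in S\}\cup\{\uu_j\}$. This makes $A_S$ precisely (a scalar multiple of) the linear functional defining the two facets of $Z$ parallel to the hyperplane spanned by $\{\uu_i : i\in S\}$.

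For the inclusion $Z \subseteq Q$: take $\xx = \tfrac12\sum_{i=1}^n \lambda_i \uu_i$ with $\lambda_i \in [-1,1]$. Then for any $S$,
\[
A_S\,\xx = \tfrac12 \sum_{i=1}^n \lambda_i\, A_S\uu_i = \tfrac12 \sum_{i\notin S} \lambda_i\, A_S\uu_i,
\]
and by the triangle inequality $|A_S\,\xx| \le \tfrac12\sum_{i\notin S} |A_S\uu_i| = \tfrac12 \sum_{i=1}^n |A_S\uu_i| = b_S$, using again that $A_S\uu_i = 0$ for $i\in S$. Hence $\xx \in Q$.

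For the reverse inclusion $Q \subseteq Z$: this is the step I expect to be the main obstacle, since it requires arguing that these particular inequalities are \emph{enough} to cut out $Z$, not merely valid for it. I would proceed by showing that every facet of $Z$ is among the listed inequalities. A standard fact about zonotopes in general position is that each facet of $Z = \tfrac12\sum[-\uu_i,\uu_i]$ is a translate of the zonotope generated by some $(d-1)$-subset $\{\uu_i : i\in S\}$ of the generators, and its outer normal is the corresponding $A_S$; the facet is obtained by setting $\lambda_j = \operatorname{sign}(A_S\uu_j)$ for $j\notin S$ (and letting $\lambda_i$ range over $[-1,1]$ for $i \in S$), which gives exactly the supporting value $b_S$ computed above. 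Thus the inequalities $-b_S \le A_S\xx \le b_S$ over all $S\in\binom{[n]}{d-1}$ include all facet-defining inequalities of $Z$ (some may be redundant, when the corresponding $A_S$ is parallel to another or when $Z$ is not simple, but redundancy does not affect the set equality), so their common solution set is contained in $Z$. Combining the two inclusions gives $Q = Z$. One should note the running assumption that $Z$ is full-dimensional, i.e.\ that the $\uu_i$ span $\RR^d$ (automatic in the LR-zonotope setting by general position), so that facet normals are well-defined and $A_S \ne 0$ whenever $\{\uu_i : i\in S\}$ is independent.
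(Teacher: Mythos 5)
Your proof is correct and follows essentially the same approach as the paper's: both rely on the key fact that the facet normals of a zonotope are exactly the wedges $A_S$ of $(d-1)$-subsets of generators that span a hyperplane, and both compute the supporting value $b_S$ by maximizing $A_S$ over the vertices of $Z$ (equivalently, by setting $\lambda_j = \operatorname{sign}(A_S \uu_j)$ for $j\notin S$). You spell out the easy inclusion $Z\subseteq Q$ via the triangle inequality as a separate step, whereas the paper handles validity of the extra (possibly redundant) inequalities implicitly by noting that $\pm b_S$ are the extreme values of $A_S$ over the vertex set; this is a purely expository difference.
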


\begin{proof}
Each facet of a zonotope is a zonotope itself, generated by the $\uu_i$ contained in, and spanning, a linear hyperplane.
Hence, every normal vector is indeed of the form $A_S $ for some $(d-1)$-subset $S$ of $U$.%
\footnote{%We are here identifying $\bigwedge^{d-1} \RR^d \cong (\RR^d)^*$ in the natural way. 
If the $\uu_i$ are not in general position, some of the $A_S$ may be zero, but that is not an issue; it just creates trivial inequalities.}

By central symmetry, there are two parallel facets with normal vectors $\pm A_S$.
The corresponding facet inequalities are $ - b_S \le A_S \xx \le b_S$, since $\pm b_S$ are the minimum and maximum values taken by $A_S$ in the set
\[
\left\{ \sum_{i=1}^n \pm \uu_i \right\},
\]
which contains all vertices of $Z$.
\end{proof}

\subsection{Certifying an upper bound for the covering radius}\label{subsec:certifying-cov-radius}
We here give an algorithm to decide whether a facet-defined polytope $P=\{\xx\in \RR^d : A\xx \le b\}$ contains a fundamental domain.
By Lemma~\ref{lemma:cov_radius_ub_if_contains_fundamental_domain}, this computational problem is equivalent to that of certifying a given upper bound $\rho$ for the covering radius of a polytope.

We consider a special family of fundamental domains of the integer lattice, given by unions of \emph{dyadic voxels}.

\begin{definition}
\label{def:dyadic}
A \emph{dyadic $d$-voxel} of \emph{level $\ell\in \ZZ_{\ge 0}$} is a half-open cube of the form
\[
\cc + \frac{1}{2^\ell} [0,1)^d,
\]
for some dyadic point $\cc\in \tfrac{1}{2^\ell} \ZZ^d$.
The integer point
$
\floor{\cc}\coloneqq(\floor{\cc_1},\dots, \floor{\cc_d}) \in \ZZ^d
$
 is the \emph{displacement} of the voxel, and the difference 
 \[
 2^\ell(\cc - \floor{\cc}) \in \{0,\dots, 2^\ell-1\}^d
 \]
  is the \emph{type} of the voxel.
 \end{definition}

All dyadic voxels of the same type are equivalent by integer translation, and the voxel types are naturally arranged as an infinite rooted $2^d$-ary tree with the voxels of level $\ell$ at depth $\ell$.
We call this the infinite dyadic tree.%
\footnote{One can represent each type of level $\ell$ as a vector $\tau=(\tau_1,\dots,\tau_d)$ where each $\tau_i$ is a binary string of length $\ell$. In this representation $\tau$ is an ancestor of $\tau'$ in the infinite dyadic tree if and only if each $\tau_i$ is an initial segment, or prefix, of the corresponding $\tau'_i$. Equivalently, if the voxel of type $\tau$ with zero displacement is contained in that of type $\tau'$.}

 A \emph{dyadic fundamental domain} is a fundamental domain obtained as a finite union of dyadic voxels. 
 Since two voxels of the same type are equivalent by integer translation, in order to decide whether a set of voxels defines a fundamental domain only their types matter, and no type can appear more than once in the set.
Moreover, we have the following equivalences:

\begin{proposition}
\label{prop:dyadic_fundamental_domains}
Let $S$ be a collection of dyadic $d$-voxels with no repeated types.
The following are equivalent:
\begin{enumerate}
    \item $S$ is a dyadic fundamental domain.
    \item The voxels of the types in $S$ with zero displacement cover $[0,1)^\ell$ without overlap.
          (We say that they form a dyadic subdivision of $[0,1)^\ell$).
    \item The nodes of the infinite dyadic tree corresponding to types in $S$ are the leaves of a full subtree containing the root.%
    \footnote{An $m$-ary rooted tree is called \emph{full} if every element has either $0$ or $m$ children.}
\end{enumerate}
\end{proposition}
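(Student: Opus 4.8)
The plan is to pass freely among three descriptions of the same data: a dyadic voxel modulo $\ZZ^d$, the zero-displacement voxel of its type inside the cube $[0,1)^d$, and a node of the infinite dyadic tree. Condition (1) lives in the first picture, (2) in the second, and (3) in the third. Throughout I would use the identification recalled in the footnote above, of the infinite dyadic tree with the set of zero-displacement voxels ordered by reverse inclusion; in particular, a node $W$ is a proper ancestor of a node $W'$ exactly when $W_0\supsetneq W'_0$ for the corresponding zero-displacement voxels, the $2^d$ children of $W$ have zero-displacement voxels partitioning $W_0$, and two distinct such voxels are either disjoint or one strictly contains the other.

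For $(1)\Leftrightarrow(2)$ I would first note that any two points of a dyadic voxel differ by less than $1$ in every coordinate (the voxel being a product of half-open intervals of length at most $1$), so the voxel contains at most one point of each coset $\pp+\ZZ^d$; hence the quotient map $\pi\colon\RR^d\to\RR^d/\ZZ^d$ is injective on each voxel. Next, a voxel $V$ of type $\tau$ at level $\ell$ is an integer translate of its zero-displacement representative $V_0=\tfrac{\tau}{2^\ell}+\tfrac1{2^\ell}[0,1)^d\subseteq[0,1)^d$, so $\pi(V)=\pi(V_0)$; and since $\pi$ restricts to a bijection $[0,1)^d\to\RR^d/\ZZ^d$, the sets $\pi(V_0)$, $V\in S$, partition $\RR^d/\ZZ^d$ if and only if the $V_0$ partition $[0,1)^d$. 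Unwinding the definition, $S$ is a dyadic fundamental domain exactly when the $\pi(V)=\pi(V_0)$, $V\in S$, cover $\RR^d/\ZZ^d$ without overlap, so $(1)$ is equivalent to $(2)$.

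For $(2)\Rightarrow(3)$ I would let $\mathcal T$ be the collection of all voxels of $S$ together with all of their ancestors in the infinite dyadic tree; this is a finite subtree containing the root $[0,1)^d$. Then I would verify: (a) the leaves of $\mathcal T$ are exactly the voxels of $S$; and (b) $\mathcal T$ is full. For (a), no $V\in S$ has a descendant in $\mathcal T$, since such a descendant $W$ would satisfy $V'_0\subseteq W_0\subsetneq V_0$ for some $V'\in S$, contradicting that the voxels of $S$ tile $[0,1)^d$ and hence are pairwise disjoint; conversely, any $W\in\mathcal T$ that is a proper ancestor of a member of $S$ is itself not in $S$, and the tiling of $[0,1)^d$ by $S$ restricts to a tiling of $W_0$ and thus of the zero-displacement voxel of each child of $W$, so every child of $W$ lies in $\mathcal T$ and $W$ is not a leaf; as every node of $\mathcal T$ is an ancestor-or-equal of some member of $S$, every leaf of $\mathcal T$ therefore belongs to $S$. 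The same restriction argument gives (b): every non-leaf node of $\mathcal T$ has all $2^d$ children in $\mathcal T$. Conversely, for $(3)\Rightarrow(2)$, let $\mathcal T$ be the full subtree with leaf set $S$; descending from the root, whose zero-displacement voxel is $[0,1)^d$, and replacing each internal node by the partition of its zero-displacement voxel given by its children, exhibits $\{V_0:V\in S\}$ as a dyadic subdivision of $[0,1)^d$, and distinct leaves cannot have nested zero-displacement voxels since that would make one a proper descendant of the other.

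The only step that needs care is the restriction claim used in $(2)\Rightarrow(3)$: that a dyadic subdivision of $[0,1)^d$ restricts to a dyadic subdivision of every dyadic subcube $W_0$, and hence of each of the $2^d$ halves of $W_0$. This is immediate from the nesting dichotomy for dyadic voxels, but it is worth isolating because it is invoked both to locate the leaves of $\mathcal T$ and to prove $\mathcal T$ full. Everything else is a direct translation between the torus, the cube, and the tree.
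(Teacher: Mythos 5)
Your proof is correct. For $(1)\Leftrightarrow(2)$ it fleshes out the paper's one-line appeal to ``voxels of the same type are equivalent by integer translation'': you add the needed injectivity of $\pi\colon\RR^d\to\RR^d/\ZZ^d$ on each half-open voxel (side length $\le 1$) and the bijection $\pi|_{[0,1)^d}$, so the two proofs are essentially the same there. For $(2)\Leftrightarrow(3)$ your route is genuinely different: the paper argues by induction on the number of internal nodes of a full subtree, adding one subdivision step at a time, which most naturally proves $(3)\Rightarrow(2)$ and leaves the converse implicit; you instead form $\mathcal T$ directly as the ancestral closure of $S$ in the infinite dyadic tree and verify leaf-identification and fullness from the nesting dichotomy for dyadic cubes. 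Your non-inductive argument is a bit longer but more explicitly handles $(2)\Rightarrow(3)$, which is the less obvious direction.

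One caveat, which is a latent imprecision in the proposition itself rather than a flaw peculiar to your argument: your ``unwinding'' step asserts that $\bigcup S$ being a fundamental domain is \emph{exactly} equivalent to the images $\pi(V)$, $V\in S$, partitioning the torus. That is true only if the voxels of $S$ are pairwise disjoint. ``No repeated types'' does not forbid nested voxels: for $d=1$, $S=\{[0,1),\,[0,\tfrac12)\}$ has distinct types and $\bigcup S=[0,1)$ is a fundamental domain, yet the $\pi(V)$ overlap and (2), (3) fail. The intended reading of (1), consistent with how the proposition is used in the paper, is that the voxels of $S$ \emph{tile} a fundamental domain (are pairwise disjoint with fundamental-domain union); under that reading your $(1)\Leftrightarrow(2)$ step is airtight. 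It would be worth stating this disjointness explicitly rather than leaving it to ``exactly when.''
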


\begin{proof}
The equivalence of (1) and (2) follows from the fact that voxels of the same type are equivalent by translation.
The equivalence of (2) and (3) is easy by induction on the number of internal nodes in the tree.
Induction starts with the tree with zero internal nodes, whose only leaf is the root, indeed a fundamental domain.
The inductive step comes from noting that substituting a leaf with its $2^d$ children amounts to subdividing the corresponding dyadic voxel.
\end{proof}

That is to say, every dyadic fundamental domain can be expressed as (the leaves of) a full-subtree of the infinite dyadic tree, with leaves labeled by their displacements.

This suggests a simple algorithm to construct a dyadic fundamental domain contained in a polytope $P$ via a search in the infinite dyadic tree, starting at the root:
If a node represents a type that fits (up to lattice translation) in $P$, add it to the fundamental domain you are constructing; if it doesn't, then add its $2^d$ children to the list of nodes to be processed.
This algorithm needs checking whether a given voxel admits a lattice translation contained in $P$, which amounts to the feasibility of an integer
linear program:
\begin{proposition}
    \label{prop:dyadic_voxel_in_polytope}
    Let $P=\{A\xx \le b\}\subset \RR^d$ be a polytope and let $V = \cc + [0, \epsilon)^d$ be a voxel.
    Then, $P$ contains an integer translation of $V$ if and only if the following Integer Linear Program is feasible:
    \begin{align}
    \label{eq:dyadic_voxel_in_polytope}
         \text{\rm find}       \;\;\xx \in \ZZ^d \qquad
         \text{\rm subject to} \;\;A \xx  \le b - A  \cc  - A_{\ge 0}\, {\boldsymbol \epsilon},
    \end{align}
    Here $\boldsymbol \epsilon \in \RR^d$ is the vector with all entries equal to $\epsilon$ and $A_{\ge 0}$ denotes the matrix with $(i,j)$-th entry equal to $\max\{0,A_{ij}\}$, for every $(i,j)$.
\end{proposition}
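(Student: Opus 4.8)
The plan is to translate the containment $V \subseteq P$ into a statement about how far $P$ can be pushed by a lattice vector $\xx$ before it fails to contain $V$. Writing $V = \cc + [0,\epsilon)^d$, an integer translation of $V$ by $\xx \in \ZZ^d$ is the set $\cc + \xx + [0,\epsilon)^d$, so what we must characterize is the existence of $\xx \in \ZZ^d$ with $\cc + \xx + [0,\epsilon)^d \subseteq P$. Since $P$ is defined by $A\yy \le b$, this is equivalent to requiring, for every $\yy$ in the (half-open) cube $\cc + \xx + [0,\epsilon)^d$ and every row $i$, the inequality $A_i \yy \le b_i$.

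First I would fix a row $i$ and compute the supremum of $A_i \yy$ over $\yy \in \cc + \xx + [0,\epsilon)^d$. Writing $\yy = \cc + \xx + \zz$ with $\zz \in [0,\epsilon)^d$, we have $A_i \yy = A_i\cc + A_i\xx + A_i\zz$, and the supremum of $A_i\zz$ over $\zz\in[0,\epsilon)^d$ is $\epsilon \sum_j \max\{0, A_{ij}\} = (A_{\ge 0}\boldsymbol\epsilon)_i$, where the half-open cube makes this a supremum that is approached but—on the coordinates where $A_{ij}>0$—not attained; crucially, the supremum equals $(A_{\ge 0}\boldsymbol\epsilon)_i$ and the condition ``$A_i\yy \le b_i$ for all $\yy$ in the cube'' is equivalent to ``$\sup_{\yy} A_i\yy \le b_i$'', i.e. to $A_i\cc + A_i\xx + (A_{\ge 0}\boldsymbol\epsilon)_i \le b_i$. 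Here one should check the half-open subtlety: even though the supremum over the half-open cube may not be attained, the closed inequality $A_i\yy\le b_i$ holds on the whole half-open cube if and only if it holds in the limit, precisely because $P$ is closed; so the condition is exactly $A_i\xx \le b_i - A_i\cc - (A_{\ge 0}\boldsymbol\epsilon)_i$. Assembling these over all rows $i$ gives the vector inequality $A\xx \le b - A\cc - A_{\ge 0}\boldsymbol\epsilon$, which is exactly the constraint system of the integer program~\eqref{eq:dyadic_voxel_in_polytope}.

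Therefore $P$ contains an integer translation of $V$ if and only if there is an $\xx\in\ZZ^d$ satisfying that system, i.e. if and only if the program is feasible. The one point deserving a careful word is the handling of the half-open cube $[0,\epsilon)^d$ versus its closure $[0,\epsilon]^d$: on coordinates with $A_{ij} > 0$ the worst case is approached only in the limit, so the bound $(A_{\ge 0}\boldsymbol\epsilon)_i$ is a supremum rather than a maximum, but since $P$ is closed this does not change which $\xx$ work. This is the only subtlety; the rest is the bookkeeping just described, so I expect no real obstacle.

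\begin{proof}
    Any integer translation of $V$ has the form $\cc + \xx + [0,\epsilon)^d$ for some $\xx\in\ZZ^d$, and it is contained in $P = \{\yy : A\yy\le b\}$ if and only if $A_i\yy\le b_i$ for every row index $i$ and every $\yy\in \cc+\xx+[0,\epsilon)^d$. Fix $i$ and write $\yy = \cc+\xx+\zz$ with $\zz\in[0,\epsilon)^d$; then
    \[
        A_i\yy = A_i\cc + A_i\xx + A_i\zz, \qquad \sup_{\zz\in[0,\epsilon)^d} A_i\zz = \epsilon\sum_{j=1}^d \max\{0,A_{ij}\} = (A_{\ge 0}\,\boldsymbol\epsilon)_i.
    \]
    Since $P$ is closed, the inequality $A_i\yy\le b_i$ holds for all $\yy$ in the half-open cube if and only if it holds in the supremum, i.e.\ if and only if
    \[
        A_i\xx \le b_i - A_i\cc - (A_{\ge 0}\,\boldsymbol\epsilon)_i .
    \]
    Indeed, if this holds then $A_i\yy = A_i\cc + A_i\xx + A_i\zz \le A_i\cc + A_i\xx + (A_{\ge 0}\boldsymbol\epsilon)_i \le b_i$ for all $\zz\in[0,\epsilon)^d$; conversely, if it fails, then taking $\zz$ approaching the maximizing corner of the closed cube yields points of $\cc+\xx+[0,\epsilon)^d$ with $A_i\yy > b_i$. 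Collecting these conditions over all $i$ gives the vector inequality $A\xx \le b - A\cc - A_{\ge 0}\,\boldsymbol\epsilon$. Hence $P$ contains an integer translation of $V$ precisely when there exists $\xx\in\ZZ^d$ satisfying this system, that is, precisely when the integer linear program~\eqref{eq:dyadic_voxel_in_polytope} is feasible.
\end{proof}
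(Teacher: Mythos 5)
Your proof is correct and takes essentially the same route as the paper's: both reduce containment of the half-open voxel to a maximum of each linear functional $A_i$ over the (closure of the) cube, which equals $(A_{\ge 0}\boldsymbol\epsilon)_i$, and both use closedness of $P$ to dispatch the half-open subtlety.
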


\begin{proof}
A half-open voxel is contained in $P$ if and only of the closed voxel is, that is, if and only of all the vertices of the translated closed voxel are in $P$.
Hence, for a given $\xx \in \ZZ^d$, the voxel $\xx + V =  \xx + \cc + [0, \epsilon)^d$ is contained in $P$ if and only if all the points $\yy\in \{0,\epsilon\}^d$ satisfy the inequalities $A(\yy+\cc +\xx) \le b$.
Now, for each row $A_i$ of $A$, the maximum value of the functional $A_i$ on the set $\{0,\epsilon\}^d$ is precisely $(A_i)_{\ge 0} \,{\boldsymbol \epsilon}$.
\end{proof}

Now, the search algorithm we have described has two issues:
on the one hand, if $P$ does not contain a dyadic fundamental domain then the search does not terminate;
on the other hand, this may happen even if $\mu(P) \le 1$, since $P$ may contain a fundamental domain but not a dyadic one.
For rational polytopes we can solve both issues resorting to Proposition~\ref{prop:cov_radius_safe_interval}.

\begin{theorem}
\label{thm:cov_radius_dyadic}
    Let $P$ be a rational polytope and let $D$ be an upper bound for the denominator of $\mu(P)$.
    Let $\rho=r/s$ with $r,s\in \ZZ$ and $s>0$.
    Let $P^+ = \left(\rho + \frac1{2sD}\right) P$.
    \begin{enumerate}
        \item If $\mu(P) \le \rho$, then $P^+$ contains a dyadic fundamental domain.
        \item If $\mu(P) > \rho$, then there is an $\ell \in \ZZ_{\ge 0}$ and a dyadic point $\cc\in \tfrac{1}{2^\ell}\{0,\dots, 2^\ell-1\}^d$ such that $P^+$ does not intersect $\cc +\ZZ^d$.
    \end{enumerate}
\end{theorem}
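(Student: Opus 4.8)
The plan is to derive both statements from the scaling identity $\mu(\lambda C)=\mu(C)/\lambda$ (for $\lambda>0$): writing $\rho^+:=\rho+\tfrac{1}{2sD}$ so that $P^+=\rho^+P$, we have $\mu(P^+)=\mu(P)/\rho^+$, hence part~(1) amounts to establishing $\mu(P^+)<1$ and part~(2) to establishing $\mu(P^+)>1$, after which each inequality is matched with an appropriate geometric fact.

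\emph{Part (1).} From $\mu(P)\le\rho<\rho^+$ one gets $\mu(P^+)<1$ at once, with no appeal to the denominator bound. It then suffices to prove the general claim that every convex body $C$ with $\mu(C)<1$ contains a dyadic fundamental domain. I would argue as follows. Since $\inf\{\rho:\rho C+\ZZ^d=\RR^d\}=\mu(C)<1$, there is $\mu'<1$ with $\mu'C+\ZZ^d=\RR^d$; fix also a ball $B(\pp_0,r)\subseteq C$. By convexity, $\mu'C+(1-\mu')B(\pp_0,r)\subseteq C$, i.e.\ with $\vv_0:=(1-\mu')\pp_0$ and $\varepsilon:=(1-\mu')r>0$ one has $\mu'C+\vv_0+B(\oo,\varepsilon)\subseteq C$. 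Given $\yy\in\RR^d$, picking $\qq\in\ZZ^d$ with $\yy-\vv_0\in\mu'C+\qq$ yields $B(\yy,\varepsilon)\subseteq C+\qq$, hence $B(\yy,\varepsilon)-\qq\subseteq C$; so every axis-aligned cube of diameter less than $\varepsilon$ admits an integer translate contained in $C$. I would then run the dyadic-tree search sketched before Proposition~\ref{prop:dyadic_voxel_in_polytope}: starting at the root, declare a voxel type a leaf as soon as some integer translate of it fits in $C$ (a feasibility question settled by Proposition~\ref{prop:dyadic_voxel_in_polytope}), and otherwise replace it by its $2^d$ children. Every voxel type of any level $\ell$ with $\sqrt d\,2^{-\ell}<\varepsilon$ is a leaf, so the search never descends past such a level and terminates on a finite tree whose leaves, by construction, are exactly the leaves of a full subtree containing the root; by Proposition~\ref{prop:dyadic_fundamental_domains} these types form a dyadic fundamental domain, and assigning each leaf the displacement realising its integer translate places that domain inside $C$. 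Applying the claim to $C=P^+$ completes part~(1).

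\emph{Part (2).} Here the denominator bound enters. From $\mu(P)>\rho=r/s$ and the contrapositive of Proposition~\ref{prop:cov_radius_safe_interval}(1)---namely $\mu(P)>\rho\iff\mu(P)\ge\rho+\tfrac1{sD}$---I obtain $\mu(P)\ge\rho+\tfrac1{sD}>\rho^+$, hence $\mu(P^+)>1$. Now Lemma~\ref{lemma:cov_radius_ub_if_contains_fundamental_domain}, applied to the body $P^+$ with dilation factor $1$: since its item~(1) fails for $P^+$, so does its item~(4), so there is a nonempty open set $W$ with $P^+\cap(W+\ZZ^d)=\emptyset$, i.e.\ $W$ disjoint from $P^++\ZZ^d$. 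Since dyadic points are dense, $W$ contains some $\cc_0\in\tfrac1{2^\ell}\ZZ^d$; as $P^++\ZZ^d$ is $\ZZ^d$-periodic and $\floor{\cc_0}\in\ZZ^d$, the reduced point $\cc:=\cc_0-\floor{\cc_0}\in\tfrac1{2^\ell}\{0,\dots,2^\ell-1\}^d$ is still outside $P^++\ZZ^d$, i.e.\ $P^+\cap(\cc+\ZZ^d)=\emptyset$, which is the desired conclusion.

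The crux is the general claim used in part~(1): that $\mu(C)<1$ already forces $C$ to contain a \emph{dyadic} fundamental domain, with all voxels at a uniformly bounded level. The delicate point is that containing a dyadic fundamental domain is not translation invariant (the lattice $\ZZ^d$ is fixed), so one cannot normalize $\oo\in C^\circ$ and then undo the translation; the argument above avoids normalization by using an inscribed ball and pushing the induced shift $\vv_0$ into the lattice-covering step, the positive margin $\varepsilon=(1-\mu')r$ doing the work of turning ``$C$ covers $\RR^d$'' into ``small cubes embed in $C$''. The remaining steps are routine given Proposition~\ref{prop:cov_radius_safe_interval}, Lemma~\ref{lemma:cov_radius_ub_if_contains_fundamental_domain}, and Propositions~\ref{prop:dyadic_fundamental_domains}--\ref{prop:dyadic_voxel_in_polytope}.
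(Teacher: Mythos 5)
Your proof is correct, and the overall skeleton matches the paper's: reduce both parts to the scaling inequality $\mu(P^+)<1$ (resp.\ $>1$), using Proposition~\ref{prop:cov_radius_safe_interval} only in part~(2), and then invoke density of dyadic points together with Lemma~\ref{lemma:cov_radius_ub_if_contains_fundamental_domain} for part~(2). Your part~(2) is essentially identical to the paper's.

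Where you genuinely diverge is in part~(1), in how you pass from $\mu(P^+)<1$ to the existence of a dyadic fundamental domain. The paper considers, for each level $\ell$, the union $D_\ell$ of all level-$\ell$ voxels contained in $P^+$, argues $D_\ell\to P^+$ in Hausdorff distance so $\mu(D_\ell)\to\mu(P^+)<1$, and then extracts one voxel of each level-$\ell$ type from $D_\ell$. This is slick but leans on Hausdorff-continuity of the covering radius applied to the (nonconvex, half-open) sets $D_\ell$, which is asserted without detail. You instead go quantitative: pick $\mu'\in(\mu(P^+),1)$ with $\mu'P^++\ZZ^d=\RR^d$, inscribe a ball $B(\pp_0,r)\subseteq P^+$, and use convexity to produce a uniform margin $\varepsilon=(1-\mu')r$ so that every point $\yy$ has $B(\yy,\varepsilon)-\qq\subseteq P^+$ for some $\qq\in\ZZ^d$; then every voxel type of level $\ell$ with $\sqrt d\,2^{-\ell}<\varepsilon$ has an integer translate in $P^+$, so the dyadic search (Propositions~\ref{prop:dyadic_fundamental_domains}--\ref{prop:dyadic_voxel_in_polytope}) closes at a bounded depth. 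Your version buys an elementary, self-contained termination bound (and as you note, correctly handles the fact that ``contains a dyadic fundamental domain'' is not translation-invariant by routing the shift $\vv_0$ into the lattice-covering step rather than normalizing), while the paper's version is shorter at the cost of an unproved continuity claim. Both are valid; yours is arguably the cleaner argument for a reader who wants every step spelled out. (Minor: your phrase ``diameter less than $\varepsilon$'' is a factor of $2$ more conservative than what the ball $B(\yy,\varepsilon)$ gives, but this only strengthens the requirement on $\ell$ and does not affect correctness.)
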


\begin{proof}
    For part (1) we use that
    \[
    \mu(P^+) = \frac{\mu(P)}{\rho + \frac1{2sD}} \le \frac{\rho}{\rho + \frac1{2sD}} <1.
    \]
    For each $\ell\in \NN$, let $D_\ell$ be the union of all the dyadic voxels of depth $\ell$ contained in $P^+$.
    Since $D_\ell$ converges (e.g., in the Hausdorff metric) to $P^+$ when $\ell$ goes to infinity, we have that $\mu(D_\ell)$ converges to $\mu(P^+)$.
    In particular, there is an $\ell$ such that $\mu(D_\ell) <1$.
    Hence, $D_\ell$ contains a fundamental domain, and this fundamental domain can be obtained taking one representative for each type of voxel in the union $D_\ell$.

    For part (2) we use that $\mu(P) > \rho$ implies (by Proposition~\ref{prop:cov_radius_safe_interval}) that $\mu(P) \ge \rho + \frac1{sD}$.
    Hence
    \[
    \mu(P^+) = \frac{\mu(P)}{\rho + \frac1{2sD}} \ge 
    \frac{\rho + \frac1{sD}}{\rho + \frac1{2sD}} >1.
    \]
    The statement then follows from the density of the dyadic
    points $\ZZ[\tfrac{1}{2}]^d$ in $\RR^d$ and Lemma~\ref{lemma:cov_radius_lb_iff_not_exists_open_representative_of_quotient},
    which asserts the existence of an open set $W \subset \RR^d \setminus (P^+ + \ZZ^d)$.
\end{proof}

The condition in part (2) can be tested for each $\cc$ with a variation of Eq.~\eqref{eq:dyadic_voxel_in_polytope} of Proposition~\ref{prop:dyadic_voxel_in_polytope}, simply taking $\epsilon=0$.
Since there are finitely many possible $\cc$ up to each $\ell$ and a dyadic fundamental domain has a bounded depth, Algorithm~\ref{alg:decide_covering_radius} successfully decides whether $\mu(P)\le \rho$, since it checks the cosets of all dyadic points for each $\ell$ in increasing order.

\begin{algorithm}[h]
    \caption{Decide whether $\mu(P)$ is bounded by $\rho$.}
    \label{alg:decide_covering_radius}

    \DontPrintSemicolon
    \SetKwInOut{Input}{Input}
    \SetKwInOut{Output}{Output}

    \Input{A rational polytope $P=\{A\xx\le b\}$ (with $A$ and $b$ integer) and a rational number $\rho=r/s$, with $r,s\in \ZZ$ and $s>0$.}
    \Output{A dyadic fundamental domain $S$ or a dyadic point $\cc$ certifying whether $\mu(P) \le \rho$ or not, as in Theorem~\ref{thm:cov_radius_dyadic}.}

    \SetKw{And}{and}
    \SetKw{Is}{is}
    Let $D$ be a bound on the denominator of $\mu(P)$, e.g., the one in Corollary~\ref{cor:cov_radius_size_bound}.\;
    Let 
    \[
        P^+=\left(\rho + \frac1{2sD}\right) P =\left\{A \xx \le \left(\rho + \frac1{2sD}\right) b\right\}
    \]\;
    Initialise a queue $N$ of `nodes to be processed' containing the root\;
    Initialise an empty list $S$ of `voxels in the fundamental domain'\; 
    \While {there are nodes in $N$} {
        Let $V = \cc +[0,\tfrac1{\ell^d})^d$ be one such node of minimum depth.\;
        Delete $V$ from $N$ and\;
        \eIf {$P^+$ does not intersect $\cc +\ZZ^d$\label{line:lb_test}} {
            \Return $\cc$
        } {
            \eIf { $\exists\; \pp\in \ZZ^d$ with $\pp +V \subset P^+$}
                {add the voxel $\pp + V$ to $S$}
                {add the $2^d$ children of $V$ to $N$\label{line:else}}
        }
    }
    \Return{$S$}
\end{algorithm}

Algorithm~\ref{alg:decide_covering_radius} is illustrated in Figure~\ref{fig:quad_tree_steps}.
\begin{figure}[b]
    \begin{minipage}{.25\textwidth}
        \centering
        \includegraphics[width=.90\textwidth]{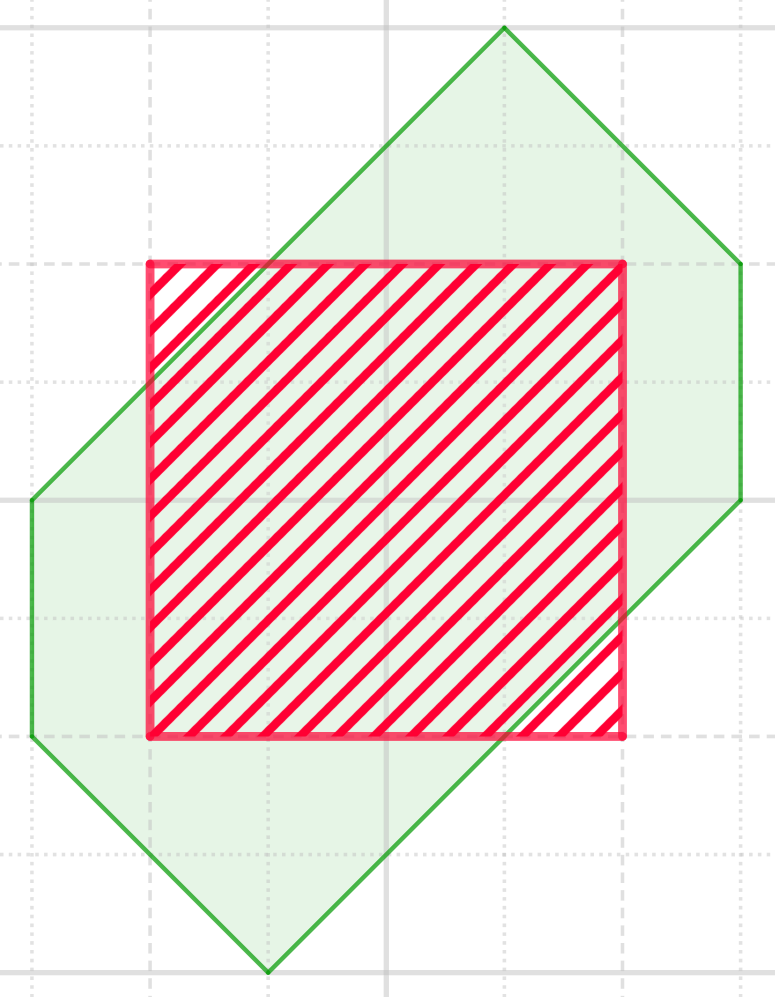}
    \end{minipage}\hfill%
    \begin{minipage}{.25\textwidth}
        \centering
        \includegraphics[width=.90\textwidth]{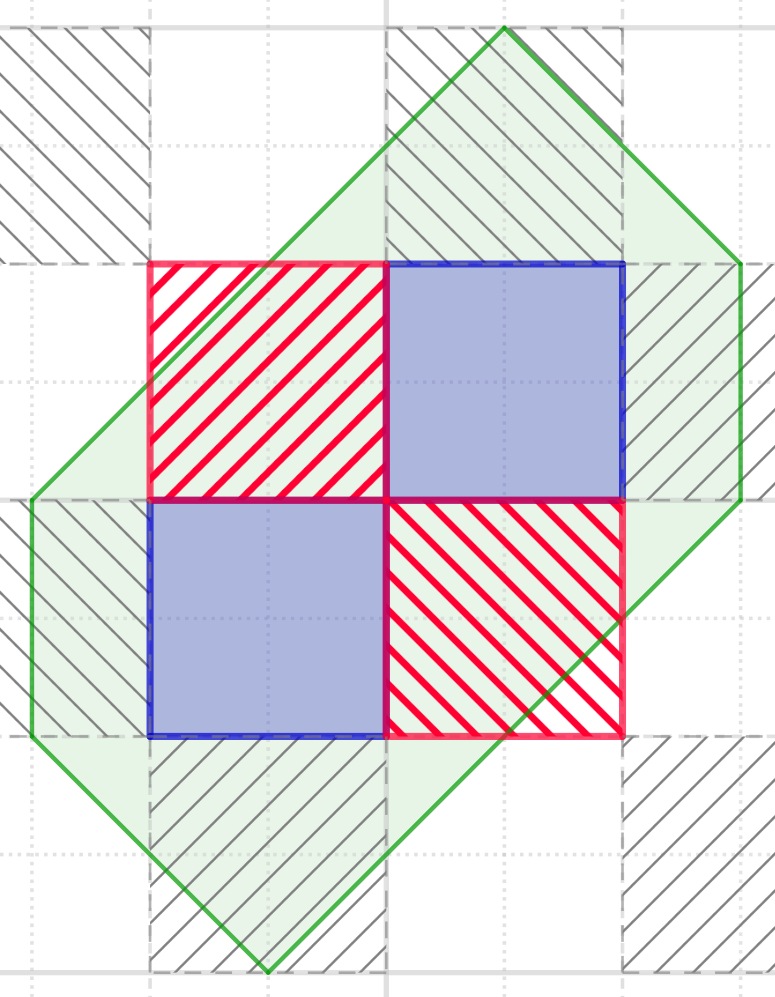}
    \end{minipage}\hfill%
    \begin{minipage}{.25\textwidth}
        \centering
        \includegraphics[width=.90\textwidth]{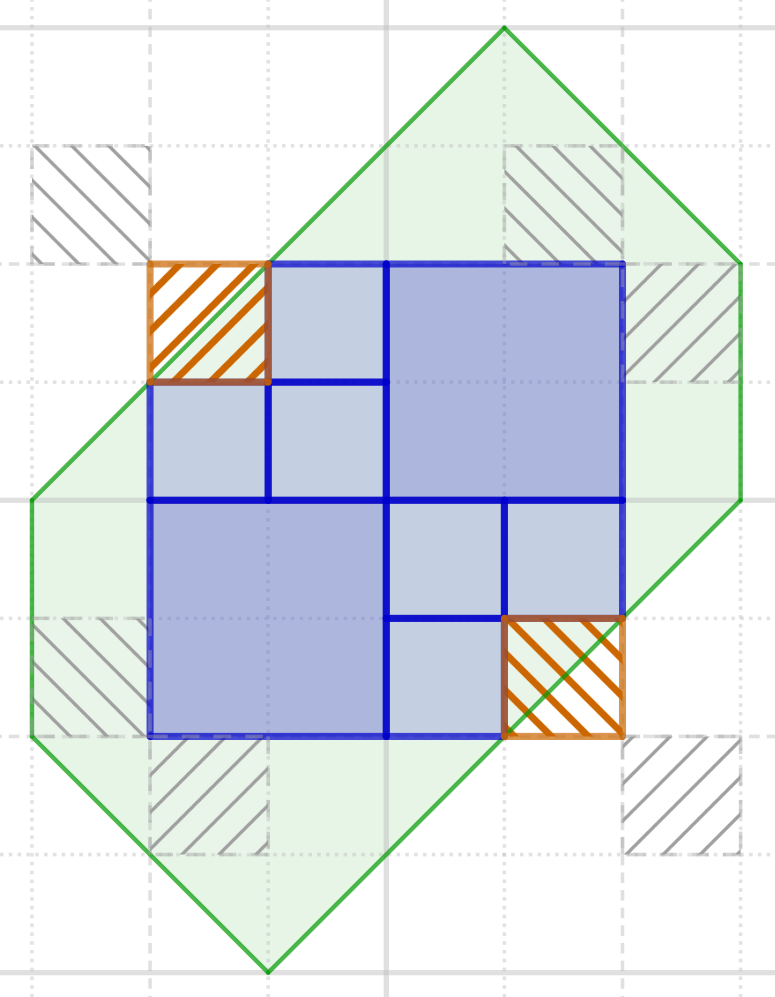}
    \end{minipage}\hfill%
    \begin{minipage}{.25\textwidth}
        \centering
        \includegraphics[width=.90\textwidth]{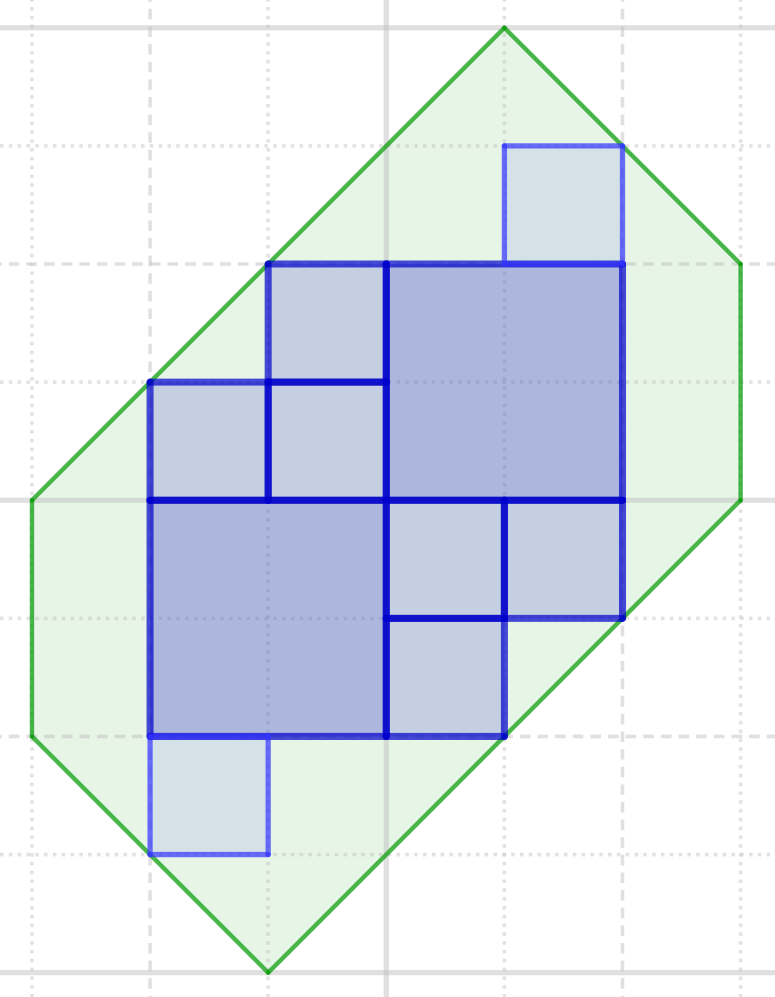}
    \end{minipage}\hfill
    \caption{States of Algorithm~\ref{alg:decide_covering_radius} at different dyadic depths, applied to $\frac12Z$ where $Z$ is the $2$-dimensional sLR zonotope with volume vector $(1,2,4)$. Notice that in the last step there are two choices for each voxel.
    (The figure takes $D=\infty$ for simplicity, although in general that could make the algorithm not terminate).}
    \label{fig:quad_tree_steps}
\end{figure}

\begin{theorem}
    \label{thm:alg_decide_cov_radius_sLRZ_correct}
    Algorithm~\ref{alg:decide_covering_radius} always terminates and it correctly decides whether
    $\mu(P) \le \rho$ for any lattice polytope $P$ and
    $\rho \in \QQ$.
\end{theorem}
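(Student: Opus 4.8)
The plan is to prove correctness and termination separately, in each case distinguishing the two possible outcomes according to whether $\mu(P) \le \rho$ or $\mu(P) > \rho$. Throughout we rely on Theorem~\ref{thm:cov_radius_dyadic}, whose hypotheses are met because $D$ computed in the first line is a genuine upper bound for the denominator of $\mu(P)$ (Corollary~\ref{cor:cov_radius_size_bound}), and $P^+ = (\rho + \tfrac1{2sD})P$ is exactly the inflated polytope appearing there.

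First I would address \emph{termination}. The algorithm processes nodes of the infinite dyadic tree in order of nondecreasing depth, and at depth $\ell$ there are only finitely many types, hence only finitely many nodes it could ever enqueue at that depth. So it suffices to show that the algorithm never descends below some finite depth $\ell_0$. If $\mu(P) \le \rho$, part (1) of Theorem~\ref{thm:cov_radius_dyadic} gives a dyadic fundamental domain contained in $P^+$; by Proposition~\ref{prop:dyadic_fundamental_domains} its types form the leaves of a finite full subtree of the infinite dyadic tree, say of depth $\ell_0$. A straightforward induction on the tree shows that whenever the algorithm reaches a node at depth $\ell_0$ whose type is a leaf of that subtree, the feasibility test of Proposition~\ref{prop:dyadic_voxel_in_polytope} succeeds (that voxel, up to lattice translation, lies in $P^+$), so the node is not subdivided; and no ancestor of such a leaf can have been certified earlier as fitting in $P^+$ without the algorithm having already halted on a smaller domain. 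Hence no node of depth exceeding $\ell_0$ is ever created, and the queue empties in finitely many steps. If instead $\mu(P) > \rho$, part (2) of Theorem~\ref{thm:cov_radius_dyadic} produces a level $\ell$ and a dyadic point $\cc \in \tfrac1{2^\ell}\{0,\dots,2^\ell-1\}^d$ with $P^+ \cap (\cc + \ZZ^d) = \emptyset$; since the algorithm enumerates all such dyadic points at each level in increasing order of level, it reaches this $\cc$ (or some other witness at an equal or smaller level) after finitely many steps and returns it at Line~\ref{line:lb_test}. In both cases the algorithm terminates.

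Next I would verify \emph{correctness}, i.e., that the output distinguishes the two cases. The two return statements correspond to the two alternatives: returning a dyadic point $\cc$ at Line~\ref{line:lb_test}, versus returning the list $S$ at the end. For the first, note that Line~\ref{line:lb_test} only fires when $P^+$ does not meet $\cc + \ZZ^d$; since $\oo \in P^\circ$ may be assumed, this means the open neighbourhood $W$ of the corresponding point of $\RR^d/\ZZ^d$ witnesses, via Lemma~\ref{lemma:cov_radius_lb_iff_not_exists_open_representative_of_quotient}, that $\mu(P^+) > 1$, equivalently $\mu(P) > \rho + \tfrac1{2sD} > \rho$; so returning $\cc$ correctly certifies $\mu(P) > \rho$. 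For the second, observe that $S$ is only returned when the queue empties with no failure at Line~\ref{line:lb_test}; by the invariant that the set of types already placed in $S$ together with the types still pending in $N$ always forms the leaf set of a full subtree of the infinite dyadic tree containing the root (this invariant is preserved by the only two mutations: subdividing a node or moving it to $S$), an empty queue means $S$ is exactly such a leaf set, hence by Proposition~\ref{prop:dyadic_fundamental_domains} the voxels in $S$ form a dyadic fundamental domain; moreover each voxel in $S$ was certified to satisfy $\pp + V \subset P^+$ for some lattice $\pp$, so $P^+$ contains a fundamental domain, whence $\mu(P^+) \le 1$ by Lemma~\ref{lemma:cov_radius_ub_if_contains_fundamental_domain}, i.e., $\mu(P) \le \rho + \tfrac1{2sD}$, which by Proposition~\ref{prop:cov_radius_safe_interval}(1) forces $\mu(P) \le \rho$. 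The two cases are mutually exclusive and exhaustive, so the algorithm decides the predicate correctly.

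The main obstacle I anticipate is making the termination argument in the case $\mu(P) \le \rho$ fully rigorous: Theorem~\ref{thm:cov_radius_dyadic}(1) only asserts that \emph{some} dyadic fundamental domain lies in $P^+$, not that the particular greedy descent the algorithm performs finds it, so one must argue that the breadth-first greedy strategy cannot wander off into an infinitely deep branch. The key point to nail down is that once a voxel type fits in $P^+$ the algorithm stops subdividing \emph{that} branch, and that along every root-to-leaf path of the guaranteed finite full subtree the fitting test must succeed no later than that leaf's depth, because a voxel contained in a fundamental domain that lies in $P^+$ is itself (up to translation) contained in $P^+$. Pinning the uniform depth bound $\ell_0$ to the depth of the fundamental domain supplied by Theorem~\ref{thm:cov_radius_dyadic}(1) is what closes the argument, and care is needed because the algorithm may in fact produce a \emph{different} (possibly shallower) dyadic fundamental domain than the guaranteed one — but shallower is harmless, and it can never be forced deeper than $\ell_0$.
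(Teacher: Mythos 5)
Your proof is correct and follows essentially the same structure as the paper's: correctness of both return paths via Proposition~\ref{prop:cov_radius_safe_interval} and Lemma~\ref{lemma:cov_radius_ub_if_contains_fundamental_domain}, and termination via Theorem~\ref{thm:cov_radius_dyadic} in each of the two cases. The one place where your wording wobbles is the sentence claiming that ``no ancestor of such a leaf can have been certified earlier as fitting in $P^+$ without the algorithm having already halted on a smaller domain'' --- certifying an ancestor does not halt the algorithm (it just settles that branch and moves on); what you actually want to say, and what your closing paragraph correctly identifies, is that any dyadic type which is a leaf of the guaranteed subtree, or a descendant of one, fits in $P^+$ (since a sub-voxel of a fitting voxel also fits), so the algorithm never subdivides past depth $\ell_0$.
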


\begin{proof}
    Observe that the algorithm returns a certificate in either case.
    Let us first show their correctness.

    If the algorithm finishes with a set of dyadic voxels, these voxels
    are a full subtree of the infinite dyadic tree by construction, and
    hence they form a dyadic fundamental domain.
    Furthermore, all of these voxels are contained in $P^+$, so
    $\mu(P) \le (\rho + \frac1{2sD})$ and
    Proposition~\ref{prop:cov_radius_safe_interval} implies $\mu(P) \le \rho$.

    On the other hand, if the algorithm finishes with a point
    $\cc$ such that $P^+$ does not intersect $\cc + \ZZ^d$,
    Lemma~\ref{lemma:cov_radius_lb_iff_not_exists_representative_of_quotient}
    implies $\mu(P) > (\rho + \frac1{2sD}) > \rho$.

    To prove that the algorithm terminates, we handle the two cases separately.

    If $\mu(P) \le \rho$, Theorem~\ref{thm:cov_radius_dyadic} guarantees the existence of a dyadic fundamental domain $D$ contained in $P^+$.
    Let $\ell$ be the maximum depth of the voxels in $D$.
    Then every voxel type of depth $\leq \ell$ has a representative contained in $P^+$, so the algorithm will never enter the ``else'' in line~\ref{line:else} with a voxel of depth $\leq \ell$.
    Hence, the algorithm can perform the while loop only finitely many times before $N$ becomes empty.

    If $\mu(P) > \rho$, Theorem~\ref{thm:cov_radius_dyadic} guarantees the existence of a dyadic point $\cc$ with $(\cc + \ZZ^d)\cap P^+=\emptyset$.
    Let $\ell$ be the minimal depth of such a point.
    Since the algorithm processes the infinite dyadic tree in a breadth-first search manner, it will check in a finite number of steps all the dyadic points of depth $\ell$ (either implicitly for those contained in voxels of depth $\le \ell$ and with $\pp + V \subset P^+$, or explicitly for those not contained in such voxels).
\end{proof}

% \begin{remark}
%     \label{rmk:dfs_termination}
%     Note how the proof of termination only requires that the algorithm explores
%     the dyadic tree in breadth-first order for the case where $\mu(P) > \rho$.
%     A depth-first search may only fail to terminate in such cases.
% \end{remark}

\begin{remark}
    \label{rmk:alg_decide_cov_radius_ge_sLRZ}
    Changing $\rho + \tfrac{1}{2dD}$ to $\rho - \tfrac{1}{2dD}$ in Algorithm~\ref{alg:decide_covering_radius} results in an
    algorithm that decides whether $\mu(P) \ge \rho$, with the same proof of correctness and termination:
    If the algorithm returns a dyadic fundamental domain then $\mu(P) < \rho$, otherwise
    Proposition~\ref{prop:cov_radius_safe_interval} implies $\mu(P) \ge \rho$.
\end{remark}

\begin{remark}
\label{rem:exact_mu}
    \label{rmk:alg_compute_cov_radius}
    Algorithm~\ref{alg:decide_covering_radius} can be adapted to 
    compute the exact covering
    radius of an arbitrary rational polytope $P$.
    We still perform a search in the dyadic tree so that at every step the leaves of the already explored tree form a
    dyadic fundamental domain, and then we iteratively subdivide leaves corresponding to voxel types that do not fit in the polytope, but we additionally do the following:
    \begin{itemize}
    \item For each voxel type $V$ (i.e., node in the dyadic tree) that we process we derive two bounds:
    A lower bound $\alpha_V$, given by the minimum $\rho$ such that the center of 
    some voxel of type $V$ is contained in $\rho P$, and an upper bound $\beta_V$
    given by the minimum $\rho$ such that some voxel of type $V$ is contained within $\rho P$.%
    \footnote{Both of these bounds are rational, but computing them precisely
    requires solving a Mixed Integer Linear Program with exact arithmetic related to the one of Proposition~\ref{prop:dyadic_voxel_in_polytope}}

    We let $\alpha =\max_V \alpha_V$ and $\beta = \max_V \beta_V$ where the $\max$ are
    taken over all the voxels in the current fundamental domain, so that $\mu(P)$ is guaranteed to lie in the interval $I=[\alpha, \beta]$.

    \item Instead of exploring the tree in breadth-first order, whenever we need to subdivide
    a voxel we do so with the one with the maximum $\beta_V$.\footnote{For efficiency, rather than looking for the maximum $\beta_V$ each time, one can maintain a priority queue of the voxel types.}

    \item At every step we let $\rho_I $ be the point of $I$ with the smallest denominator%
    \footnote{This point can be obtained as the highest node of the Stern-Brocot tree that is contained in the search
    interval $I$. For efficiency, rather than computing $\rho_I$ from scratch one can just descend in the Stern-Brocot tree from one $\rho_I$ to the next whenever $I$ changes.}.
    This $\rho_I$ is a candidate to be the exact covering radius, and it is declared to coincide with $\mu(P)$
    when the bounds from Proposition~\ref{prop:cov_radius_safe_interval}
    applied to $\rho_I$ discard all other elements of the interval $I$ from being the covering radius of $P$.
    \end{itemize}
        The algorithm returns the final $\rho_I=\mu(P)$ together with: the interval $I=[\alpha,\beta]$;
    the dyadic fundamental domain constructed, certifying that $\beta$ is an upper bound for $\mu(P)$;
    and the center of the voxel type with $\alpha_V = \alpha$, certifying that $\beta$ is a lower bound.
\end{remark}

\section{Computational Results}
\label{sec:computational_results}
\subsection{Implementation Details}
\label{subsec:implementation_details}

We have implemented Algorithms~\ref{alg:slrz_from_vv} and~\ref{alg:decide_covering_radius} in Python and have run them over all the volume vectors of Proposition~\ref{prop:volume_vector_count}.
The implementation is available at the \href{https://github.com/endorh/slrc-zonotopes}{git repository}%
\footnote{\url{https://github.com/endorh/slrc-zonotopes}}.

Our implementation combines rational arithmetic with floating point arithmetic.
We allow ourselves to use floating point arithmetic (namely the HiGHS MIP solver~\cite{Huangfu2017}) to find feasible solutions of the problems in Proposition~\ref{prop:dyadic_voxel_in_polytope}.
This is subject to numerical errors,
but in order to guarantee our computations, we then round all the solutions proposed by the MIP solver and verify their feasibility in exact arithmetic.
In solving the ILPs needed to construct certificates for all sLR zonotopes with volume up to $195$
we encountered no instance of the solver providing an invalid numerical solution.%
\footnote{
    However, experimenting with zonotopes of larger volume (up to $1000$), we did find various
    instances where the solver proposed solutions which were infeasible after rounding.
In such cases, in the absence of an exact MIP solver, a brute force approach could be
used, checking all candidate translations within the bounding box of the zonotope.}

Since our zonotopes are centrally symmetric around the origin, a voxel type will lie in our fundamental domain if and only if the opposite type does.
Hence, we only need to check half of the voxels in the first subdivision of the
unit cube, which automatically gives that we check only half of each level.
This has the advantage of halving the execution time and producing centrally symmetric
certificates, which are both smaller and visually clearer.

Furthermore, as we are interested in characterizing which velocity vectors are tight
for the sLRC, that is, which sLR zonotopes have covering radius exactly $\tfrac{3}{5}$,
we have also implemented and run the algorithm from Remark~\ref{rmk:alg_decide_cov_radius_ge_sLRZ};
this amounts just to a sign change in Algorithm~\ref{alg:decide_covering_radius}.

\begin{remark}
\label{rem:bound_in_negative_case}
Since very few sLR zonotopes are tight, we run the modified algorithm of Remark~\ref{rmk:alg_decide_cov_radius_ge_sLRZ} first which, in the non-tight case, certifies $\mu(Z) < \tfrac{3}{5}$.
Only in the (three) instances where the modified algorithm says $\mu(Z) \ge \tfrac{3}{5}$
we run the original algorithm to verify that $\mu(Z) \le \tfrac{3}{5}$.
%This avoids checking most zonotopes twice.

To save additional running time, when running the modified algorithm,
we first attempt a choice of $D$ smaller than that of Corollary~\ref{cor:cov_radius_size_bound},%
\footnote{We use $\sqrt{\det{A^T A}}$.}
as a smaller bound yields easier ILPs and the implication $\mu(Z) \le \rho - \epsilon \Rightarrow \mu(Z) \le \rho$ does not need $\epsilon$ to be small.
Only when the algorithm with the smaller $D$ fails to prove $\mu(Z) \le \rho - \epsilon$, we rerun with the guaranteed $D$ of the corollary.
\end{remark}

\begin{remark}
Dyadic fundamental domains with small circumradius or small volume of convex hull can be obtained with the same algorithm, but turning
the feasibility problem~\eqref{eq:dyadic_voxel_in_polytope} from Proposition~\ref{prop:dyadic_voxel_in_polytope} into an optimization problem that minimizes some norm.
This modification does not affect the search strategy or the types of voxels obtained in the final fundamental domain; it just gives the ``best'' representative of each type.

For example, the optimization problem for the Minkowski norm of $P$ is particularly simple:
\begin{align*}
    \label{eq:dyadic_voxel_in_polytope_minkowski_norm}
    \text{\rm minimize}   \;\;&\rho\in \RR\\
    \text{\rm subject to} \;\;&A \xx - b\rho \le - A  \cc  - A_{\ge 0}\, {\boldsymbol \epsilon}\\
                              &\rho \ge 0\\
                              &\xx \in \ZZ^d.
\end{align*}
Optimizing with respect to this norm results in a dyadic fundamental domain fitting
in the smallest possible dilation of the zonotope, among those with the types given by the breadth-first search.
% \pacoS{changed ``contraction'' to ``dilation''. According to wikipedia, one can use dilation for an homothety even if the ratio is smaller than one. Contraction can be confused with the oriented matroid contraction, which projects a zonotope along a generator}

The convex hull of the domain can be minimized even more by further subdividing all voxel
types to reach a regular tree of any given depth, at the expense of solving many more ILPs.
\end{remark}

\subsection{Computational results}
\label{subsec:computational}
Our main computational result is the following, from which Theorem~\ref{thm:main}
follows.

\begin{theorem}
    \label{thm:sLRZ_195}
    All 2\,133\,561 sLR $3$-zonotopes with integer volume vectors satisfying $1 \le v_1 < v_2 < v_3 < v_4$ and
    $\gcd(v_1, v_2, v_3, v_4) = 1$ up to $\sum v_i \le 195$ have covering radius less than or equal to $\tfrac{3}{5}$.
\end{theorem}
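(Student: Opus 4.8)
The plan is to reduce Theorem~\ref{thm:sLRZ_195} to a finite computation that is carried out by the algorithms of Section~\ref{sec:algorithms}. First I would use Proposition~\ref{prop:zono_sLRZ}, which tells us that for a primitive velocity vector $\vv = (v_1,v_2,v_3,v_4)$ with distinct entries, the sLRC holds for $\vv$ (for every choice of starting points) if and only if the sLR zonotope $Z$ with volume vector $\vv$ has $\mu(Z) \le \tfrac{3}{5}$ (here $n=4$, so $\tfrac{n-1}{n+1} = \tfrac35$). Thus it suffices to bound $\mu(Z) \le \tfrac35$ for the sLR $3$-zonotope attached to each of the $2\,133\,561$ volume vectors enumerated in Proposition~\ref{prop:volume_vector_count}.

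The next step is, for each such volume vector $\vv$, to produce an explicit integer generator matrix of the corresponding zonotope $Z \subset \RR^3$ using Algorithm~\ref{alg:slrz_from_vv} (whose correctness follows from Proposition~\ref{prop:lSRZ_by_volume_vectors}), and then to convert $Z$ into an inequality description $\{A\xx \le b\}$ via the facet-normal formula $A_S = \bigwedge_{i\in S}\uu_i$, $b_S = \tfrac12\sum_i |A_S\uu_i|$. With $P = Z$ and $\rho = 3/5$ in hand, I would run Algorithm~\ref{alg:decide_covering_radius} (in practice, first the $\mu \ge \rho$ variant of Remark~\ref{rmk:alg_decide_cov_radius_ge_sLRZ} to quickly clear the non-tight instances, then the original algorithm on the few remaining ones). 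By Theorem~\ref{thm:alg_decide_cov_radius_sLRZ_correct} the algorithm terminates and correctly decides $\mu(Z) \le \tfrac35$; its correctness rests on the denominator bound of Corollary~\ref{cor:cov_radius_size_bound} together with the slackness Proposition~\ref{prop:cov_radius_safe_interval} and Theorem~\ref{thm:cov_radius_dyadic}, so that a dyadic fundamental domain inside $\bigl(\rho + \tfrac{1}{2sD}\bigr)P$ certifies the exact bound $\mu(Z)\le\rho$. Running this over all volume vectors and observing that every run returns a dyadic fundamental domain (never a separating dyadic point) establishes the claim; the certificates produced are exactly the ones archived in the git repository.

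The main obstacle is not mathematical but computational and verificational: the algorithm must be run on more than two million instances, each requiring the solution of several integer linear programs (Proposition~\ref{prop:dyadic_voxel_in_polytope}), and the results must be trustworthy despite the use of a floating-point MIP solver. I would address this exactly as in Section~\ref{subsec:implementation_details}: every candidate lattice translation returned by the solver is rounded to an integer point and its feasibility re-checked in exact rational arithmetic, so a numerically wrong answer can never produce a false certificate; and the central symmetry of $Z$ about the origin is exploited to halve the work. The output is a per-zonotope certificate (a list of voxel types forming a full dyadic subtree, together with a valid lattice displacement for each), and a short script re-verifies all certificates in exact arithmetic, which is what makes the computational proof independently checkable.
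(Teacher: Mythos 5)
Your proposal is correct and follows essentially the same route as the paper: enumerate the 2\,133\,561 primitive volume vectors (Proposition~\ref{prop:volume_vector_count}), build LLL-reduced generators with Algorithm~\ref{alg:slrz_from_vv}, pass to an inequality description, and run Algorithm~\ref{alg:decide_covering_radius} (first the variant of Remark~\ref{rmk:alg_decide_cov_radius_ge_sLRZ}, then the original on the three surviving cases), with correctness guaranteed by Theorem~\ref{thm:alg_decide_cov_radius_sLRZ_correct} via Corollary~\ref{cor:cov_radius_size_bound}, Proposition~\ref{prop:cov_radius_safe_interval} and Theorem~\ref{thm:cov_radius_dyadic}, and certificates re-verified in exact arithmetic. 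One small remark: the appeal to Proposition~\ref{prop:zono_sLRZ} is unnecessary here, since Theorem~\ref{thm:sLRZ_195} is already phrased directly as a covering-radius bound on zonotopes; that proposition is only needed afterward to deduce Theorem~\ref{thm:main} from it.
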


Our computational proof is a list of the stated sLR zonotopes, each paired with a
bound $D$ for the denominator of its covering radius, and a dyadic fundamental domain
contained within the zonotope dilated by a factor of $\tfrac{3}{5} + \tfrac{1}{2\cdot 5\cdot D}$.
Details on these certificates and how they can be verified are discussed in
Section~\ref{subsec:certificates}.

\subsubsection*{Performance Analysis}
Running Algorithm~\ref{alg:slrz_from_vv} to
build zonotopes for the 2\,133\,561 volume vectors up to volume $195$ took about 10 minutes of
computing time in a standard PC, without any parallelization.
%We have not run it over the $1,578$ million up to volume $1000$.
Constructing dyadic fundamental domains for all of them 
with Algorithm~\ref{alg:decide_covering_radius} and/or its variant of Remark~\ref{rmk:alg_decide_cov_radius_ge_sLRZ} took around 15 minutes in the same setting.

Most of the time in Algorithm~\ref{alg:slrz_from_vv} was spent by the LLL step that we use to make our zonotopes ``rounder''.
The fact that globally this time is similar to that of Algorithm~\ref{alg:decide_covering_radius} indicates that 
the value of $\delta=3/4$ that we used for LLL is adequate: a higher value would increase the time spent in Algorithm~\ref{alg:slrz_from_vv} and a lower one will give worse-conditioned zonotopes, increasing the time for Algorithm~\ref{alg:decide_covering_radius}.

\begin{table}[htb]
    \centering%
    % CSV:
    % Dyadic depth, Frequency
    % 0,2118699
    % 1,8889
    % 2,3958
    % 3,1991
    % 4,18
    % 5,3
    % 10,1
    % 11,1
    % 12,1
    \begin{tabular}{rcccccccccc}
        \toprule
           \textbf{Dyadic depth} &        0 &     1 &     2 &     3 &   4 &  5 &&  10 &  11 &  12\\
        \midrule
           \textbf{Frequency}    &  2118699 &  8889 &  3958 &  1991 &  18 &  3 &&   1 &   1 &   1\\
        \bottomrule
    \end{tabular}
    \medskip
    \caption{Depth of the certificates. The three significantly deeper instances correspond to the tight sLR zonotopes}
    \label{tab:slrzs_data_195}
\end{table}
We have gathered the distribution of depths of our dyadic fundamental domain certificates in Table~\ref{tab:slrzs_data_195}.
Significantly, 99.30\% of the certificates have depth $0$.
In them, Algorithm~\ref{alg:decide_covering_radius} finishes right at the root of the dyadic tree simply asserting that the centered unit cube is contained in the computed scaled zonotope.
This stresses the importance of conditioning the zonotopes via the LLL algorithm before computing a fundamental domain, since a preliminary test revealed that without the LLL conditioning fewer than 30\% of the scaled zonotopes contained a unit cube.

% With our initial choice of $\delta = \tfrac{3}{4}$, the LLL computation
% takes a similar amount of time to the construction of dyadic
% fundamental domains, so neither process is a bottleneck.
%
% However, this observation tells little about the scalability of our method
% to higher dimensions for two reasons.
% Firstly, LLL has an asymptotic complexity with respect to the dimension
% of $O(d^6)$, while a non-rigorous argument suggests
% Algorithm~\ref{alg:decide_covering_radius} has worst case complexity
% $O(2^{d(d-1)})$, suggesting the later should become more problematic in higher dimensions.
% This follows from the observation that the bound from Corollary~\ref{cor:cov_radius_size_bound}
% grows as $O(B^d)$ where $B$ is the bound on the entries of the matrices $A$ and $b$,
% meaning the depth reached by the worst case grows as $O(d)$, while on every
% step of the algorithm, the asymptotic number of children that need to be
% subdivided again is limited to the intersection with a $d-1$-dimensional
% subspace (a facet).
% Lastly, the worst case complexity hardly tells the whole picture, as,
% even in our case, most of the time is spent in the enumeration and rapid
% discarding of the exponentially many trivial cases.
% It is hard to judge which of these factors will become the main bottleneck
% in higher dimensions.

% A compromise that would avoid having to answer this question is running LLL
% in parallel to the search of dyadic fundamental domains, restarting the later
% when a more suitable representative has been found.

\subsubsection*{The three tight sLR zonotopes for $n=4$}\label{subsubsec:tight_slr_3_zonotopes}
As already mentioned, our computations show that only three sLR $3$-zonotopes with volume at most
$195$ are tight.
More precisely:

\begin{theorem}
    \label{thm:sLRZ_tight_3}
    The only volume vectors $(v_1, v_2, v_3, v_4) \in \ZZ^4$ with $1 \le v_1 < v_2 < v_3 < v_4$ and
    $\gcd(v_1, v_2, v_3, v_4) = 1$ up to $\sum v_i \le 195$ whose
    associated sLR zonotopes have covering radius equal to
    $\tfrac{3}{5}$ are $(1, 2, 3, 4)$, $(1, 3, 4, 6)$ and $(1, 3, 4, 7)$.
    Small generators for these sLR zonotopes are:
    \newcommand{\php}{\hphantom{+}}
    \begin{enumerate}
        \item $v=(1, 2, 3, 4)$:\; $\{ (1\;   {-}1\; 1),\; (0\; \php 1\; \php 1),\; (\php 1\; \php 1\;   {-}1),\; (  {-}1\;   {-}1\; \php 0)\}$
        \item $v=(1, 3, 4, 6)$:\; $\{(1\; \php 1\; 2),\; (1\;   {-}1\; \php 0),\; (  {-}1\;   {-}1\; \php 1),\; (\php 0\; \php 1\;   {-}1)\}$
        \item $v=(1, 3, 4, 7)$:\; $\{(1\; \php 0\; 2),\; (1\;   {-}1\;   {-}2),\; (  {-}1\;   {-}1\; \php 1),\; (\php 0\; \php 1\; \php 0)\}$
    \end{enumerate}
\end{theorem}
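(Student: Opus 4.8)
The plan is to reduce Theorem~\ref{thm:sLRZ_tight_3} to a finite computation and then carry it out with the algorithms of Section~\ref{sec:algorithms}. First, Theorem~\ref{thm:no_big_tight} tells us that every sLR $3$-zonotope of volume at least $196$ has $\mu(Z) < \tfrac35$, so none of those is tight; hence it suffices to examine the sLR $3$-zonotopes whose volume vector $(v_1,v_2,v_3,v_4)$ is primitive, ordered and satisfies $\sum v_i \le 195$, of which there are exactly $2\,133\,561$ by Proposition~\ref{prop:volume_vector_count}. By Proposition~\ref{prop:lSRZ_by_volume_vectors} the sLR zonotope with a given primitive volume vector is unique up to unimodular equivalence, and $\mu$ is a unimodular invariant, so for each volume vector it is enough to fix the single representative zonotope produced by Algorithm~\ref{alg:slrz_from_vv} and turn its generators into an integer inequality description $\{A\xx\le b\}$ as in Section~\ref{subsec:enumerating_slrzs}.

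Next, for each of the $2\,133\,561$ zonotopes $Z$ I would run the variant of Algorithm~\ref{alg:decide_covering_radius} from Remark~\ref{rmk:alg_decide_cov_radius_ge_sLRZ}, which decides whether $\mu(Z) \ge \tfrac35$. By Theorem~\ref{thm:alg_decide_cov_radius_sLRZ_correct} this always terminates and is correct: it either returns a dyadic fundamental domain contained in $\left(\tfrac35 - \tfrac1{2\cdot 5\cdot D}\right)Z$, certifying via Proposition~\ref{prop:cov_radius_safe_interval} that $\mu(Z) < \tfrac35$ and hence that $\vv$ is not tight, or it returns a dyadic point whose $\ZZ^3$-coset avoids that dilate, certifying $\mu(Z) \ge \tfrac35$. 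For the three volume vectors where the latter occurs, namely $(1,2,3,4)$, $(1,3,4,6)$ and $(1,3,4,7)$, I would additionally run the original Algorithm~\ref{alg:decide_covering_radius} with $\rho = \tfrac35$; it returns a dyadic fundamental domain inside $\left(\tfrac35 + \tfrac1{2\cdot 5\cdot D}\right)Z$, which by Proposition~\ref{prop:cov_radius_safe_interval} gives $\mu(Z) \le \tfrac35$, so in fact $\mu(Z) = \tfrac35$. This upper bound is already part of Theorem~\ref{thm:sLRZ_195}, obtained by the very same algorithm. Reporting the outputs of Algorithm~\ref{alg:slrz_from_vv} on these three volume vectors yields the small generators listed in the statement.

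For efficiency I would incorporate the optimizations of Remark~\ref{rem:bound_in_negative_case}: running the $\mu(Z)\ge\tfrac35$ decider first, since almost all zonotopes are not tight; starting with the cheaper denominator bound $D = \sqrt{\det(A^{T}A)}$ and only falling back to the guaranteed bound of Corollary~\ref{cor:cov_radius_size_bound} when a run fails; and exploiting the central symmetry of $Z$ to halve each dyadic level. The LLL preprocessing in Algorithm~\ref{alg:slrz_from_vv} is what makes the search shallow for the overwhelming majority of instances.

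The main obstacle is not mathematical but the soundness of a computation this large: correctness rests on every instance of the feasibility problem of Proposition~\ref{prop:dyadic_voxel_in_polytope} (and its $\epsilon=0$ variant for the non-intersection test) being resolved exactly. As in Section~\ref{subsec:implementation_details}, I would let a floating-point MIP solver propose candidate integer translations but then round every proposed solution and re-check its feasibility, and each rejected coset, in exact rational arithmetic, so that the $2\,133\,561$ emitted certificates (a dyadic fundamental domain together with a denominator bound $D$ for each zonotope) can be re-verified independently.
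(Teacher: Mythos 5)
Your proposal follows the paper's route essentially exactly: enumerate the $2\,133\,561$ primitive ordered volume vectors, construct one representative zonotope per vector via Algorithm~\ref{alg:slrz_from_vv} (using Proposition~\ref{prop:lSRZ_by_volume_vectors} and unimodular invariance of $\mu$), run the $\mu(Z)\ge\tfrac35$ decider of Remark~\ref{rmk:alg_decide_cov_radius_ge_sLRZ} on each, and for the three flagged instances also run the original Algorithm~\ref{alg:decide_covering_radius}, all with the speedups of Remark~\ref{rem:bound_in_negative_case}. Two small remarks on the presentation.

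First, invoking Theorem~\ref{thm:no_big_tight} to restrict to volume $\le 195$ is superfluous here: the statement of Theorem~\ref{thm:sLRZ_tight_3} already restricts the range of volume vectors, so the reduction you describe is already built into the hypotheses. (Theorem~\ref{thm:no_big_tight} is needed to go from Theorem~\ref{thm:sLRZ_tight_3} to Theorem~\ref{thm:tight}, not to prove the former.)

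Second, for the three tight instances you derive $\mu(Z)\ge\tfrac35$ from the dyadic-point output of the modified algorithm and promise to re-verify ``each rejected coset'' in exact arithmetic. Be aware that this step is a certificate of \emph{infeasibility} of an ILP, not of feasibility, so it cannot be validated merely by rounding and substituting a proposed solution; one must instead exhaust the integer points in the relevant bounding box (as the paper notes) or use an exact MIP solver with a dual infeasibility certificate. The paper in fact avoids this altogether: its repository certificates for the three tight cases establish only $\mu(Z)\le\tfrac35$, and the lower bound $\mu(Z)\ge\tfrac35$ is obtained by non-computational arguments (the Wills pigeon-hole argument for $(1,2,3,4)$, Cusick--Pomerance for $(1,3,4,7)$, and Proposition~\ref{prop:tight_1_3_4_6} for $(1,3,4,6)$). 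Both closings of the argument are valid, but the paper's choice makes the certified computation strictly a one-sided (upper-bound) check, which is easier to make rigorous.
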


Together with Theorem~\ref{thm:no_big_tight} and the equality case of Proposition~\ref{prop:zono_sLRZ} this proves Theorem~\ref{thm:tight}.
Cusick and Pomerance~\cite{cusickviewobIII} have shown that $(1, 2, 3, 4)$ and $(1, 3, 4, 7)$ are
the only tight (primitive) instances of the non-shifted LRC for $n = 4$, and
tightness of a vector for the non-shifted version clearly implies its tightness for the shifted one.
Theorem~\ref{thm:sLRZ_tight_3} shows that the converse is not true, since $(1, 3, 4, 6)$ is tight only for the shifted version.

The zonotopes associated with these three volume vectors are illustrated in Figure~\ref{fig:slrz_tight_last_covered},
where the last covered points of each have been highlighted in red and blue.
In all cases, the structure of the last covered points in a copy of $Z$ is the same: they consist of four parallel segments in four facets (two opposite pairs) of $Z$, plus two additional isolated points in two additional facets.
The way this configuration appears in the tiling $\tfrac35 Z + \ZZ^3$ is also the same in the three examples: all the segments are integer translates of one another, and each lies in the common intersection of four copies of $Z$.
The end-points of the segment lie each in an extra copy, and these six copies correspond to the six facets of $Z$ containing last covered points.
Put differently, the subsets $R_\pp$ in the proof of Lemma~\ref{lemma:last-covered} have four or five elements, depending on whether the last covered point lies in the interior of a segment or is the end-point of it.

% Tight sLR zonotopes
% (1, 2, 3, 4):
%    generators = [
%        [ 1, -1,  1],
%        [ 0,  1,  1],
%        [ 1,  1, -1],
%        [-1, -1,  0]]
%    last_covered_segment = [
%        (-3/5, -1/6, 8/15),
%        (-2/5,  1/6, 7/15)]

% (1, 3, 4, 6):
%    generators = [
%        [ 1,  1,  2],
%        [ 1, -1,  0],
%        [-1, -1,  1],
%        [ 0,  1, -1]]
%    last_covered_segment = [
%        ( 3/4,  11/20, 1/10),
%        (17/20,  9/20, 1/10)]

% (1, 3, 4, 7):
%    generators = [
%        [ 1,  0,  2],
%        [ 1, -1, -2],
%        [-1, -1,  1],
%        [ 0,  1,  0]]
%    last_covered_segment = [
%        (109/140, -29/35, -11/70),
%        ( 23/28,  -27/35, -17/70)]

\begin{figure}[htb]
    \centering
    \begin{minipage}{.33\textwidth}
        \centering
        \includegraphics[width=.95\textwidth]{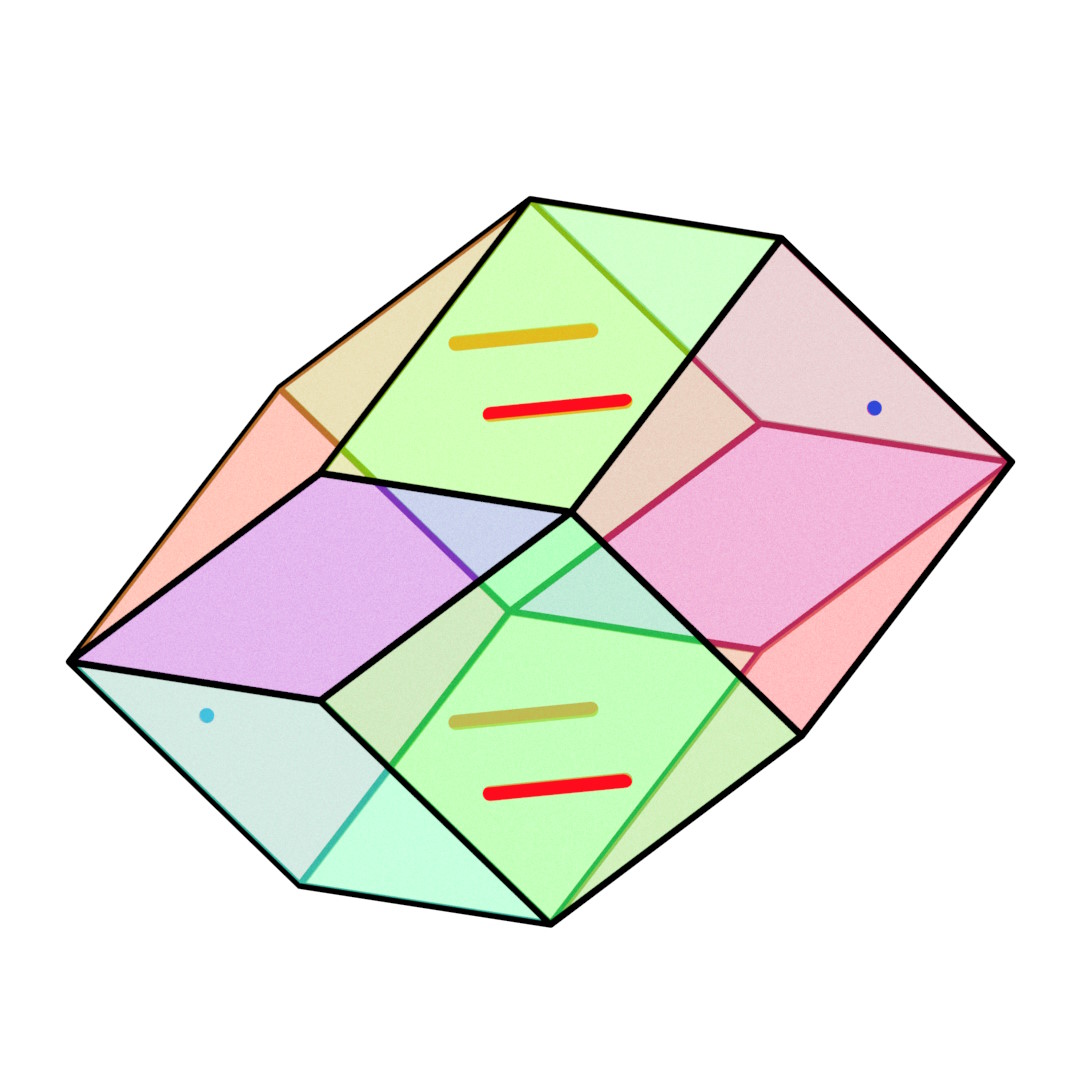}
    \end{minipage}\hfill%
    \begin{minipage}{.33\textwidth}
        \centering
        \includegraphics[width=.95\textwidth]{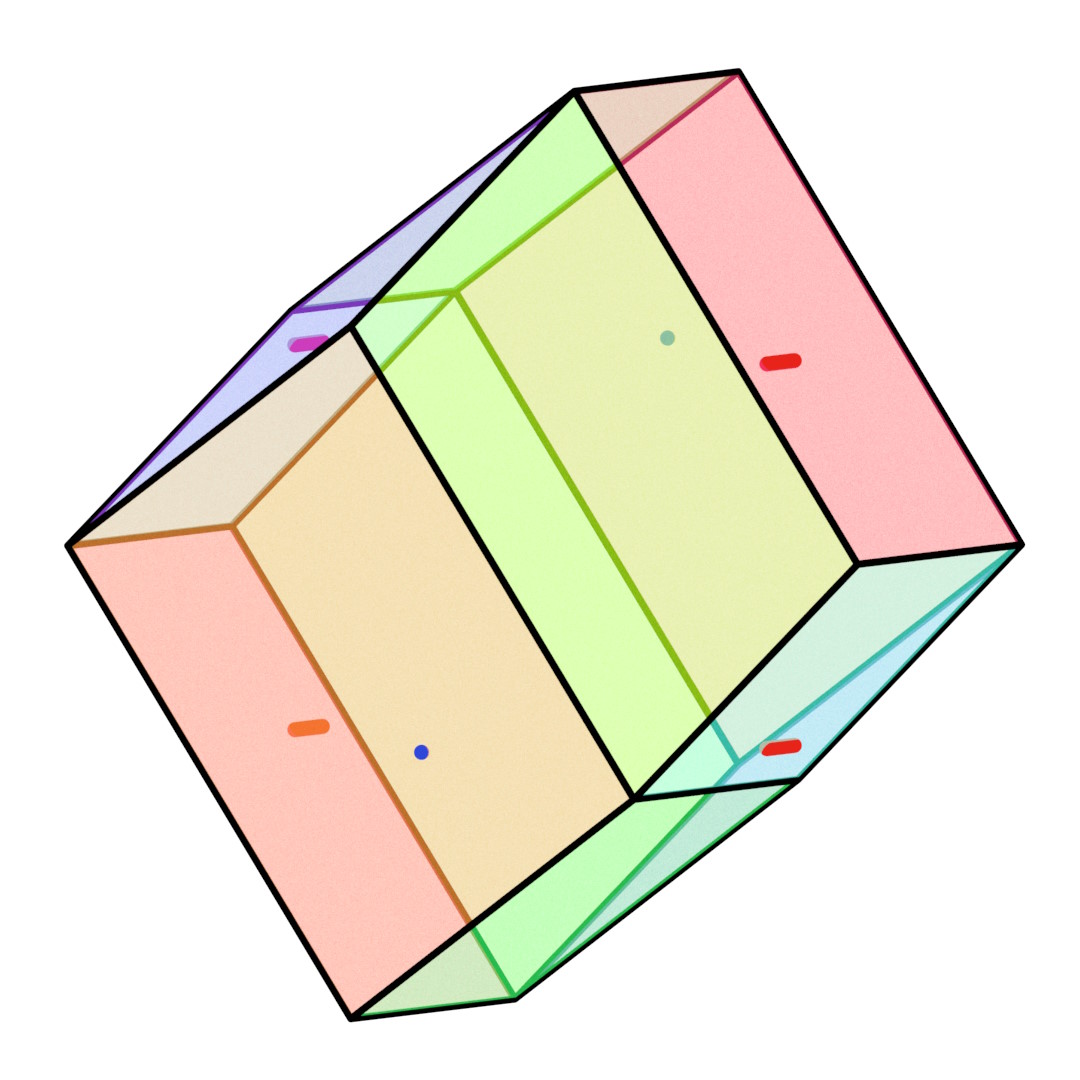}
    \end{minipage}\hfill%
    \begin{minipage}{.33\textwidth}
        \centering
        \includegraphics[width=.95\textwidth]{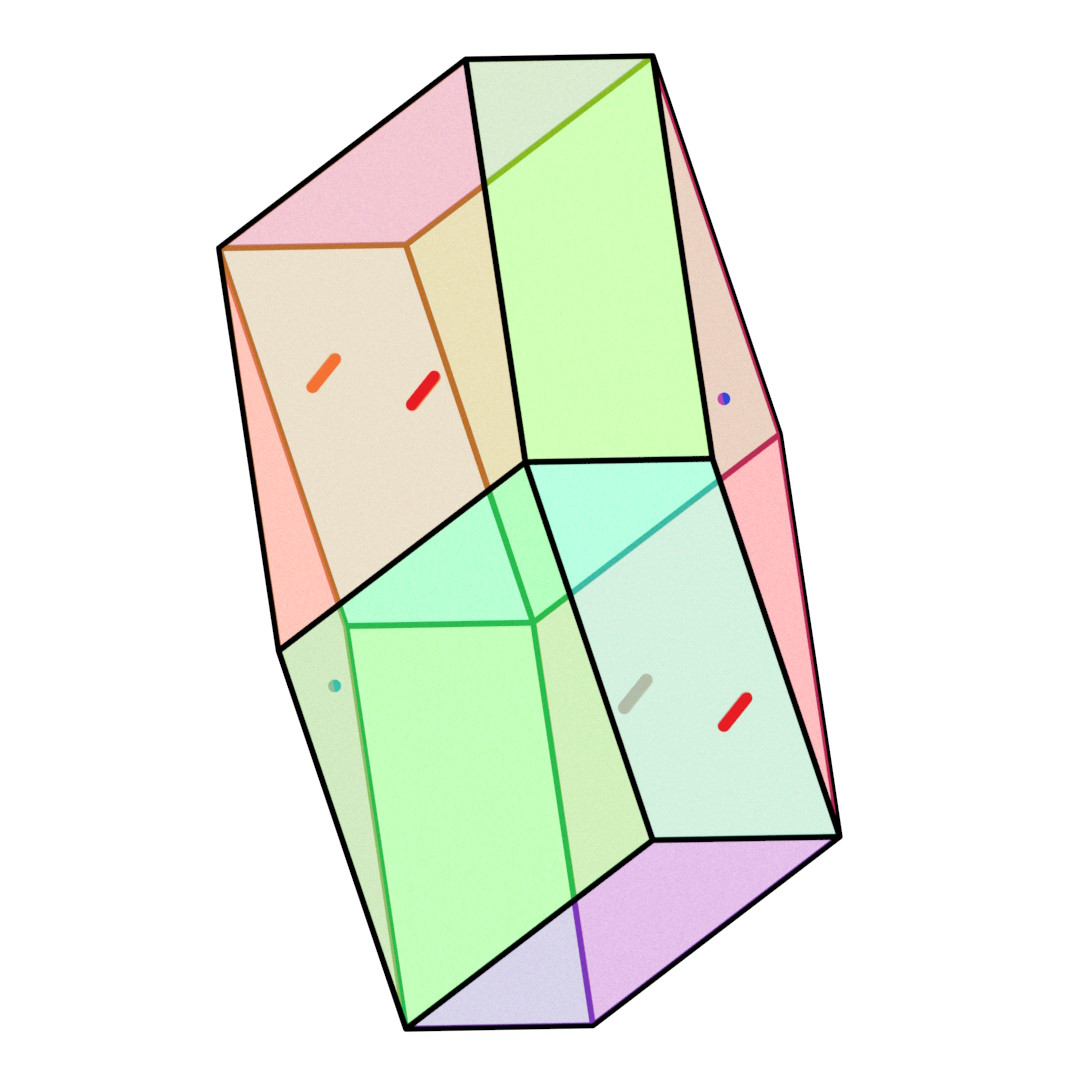}
    \end{minipage}
    \caption{The three sLR zonotopes with $\mu(Z) = \tfrac{3}{5}$ with their last covered points drawn in red (segments) and blue (points).}
    \label{fig:slrz_tight_last_covered}
\end{figure}

The projection of these zonotopes onto $\ZZ^2$ in the direction parallel to their last covered
segments is illustrated in Figure~\ref{fig:slrz_tight_last_covered_projections}.
The case $v=(1, 3, 4, 6)$ is notably distinct, as its last covered segment happens to be
parallel to one of its generators (the one associated with volume $3$).

\begin{figure}[htb]
    \begin{center}
        \begin{minipage}{.34\textwidth}
            \centering%
            \includegraphics[width=.95\textwidth]{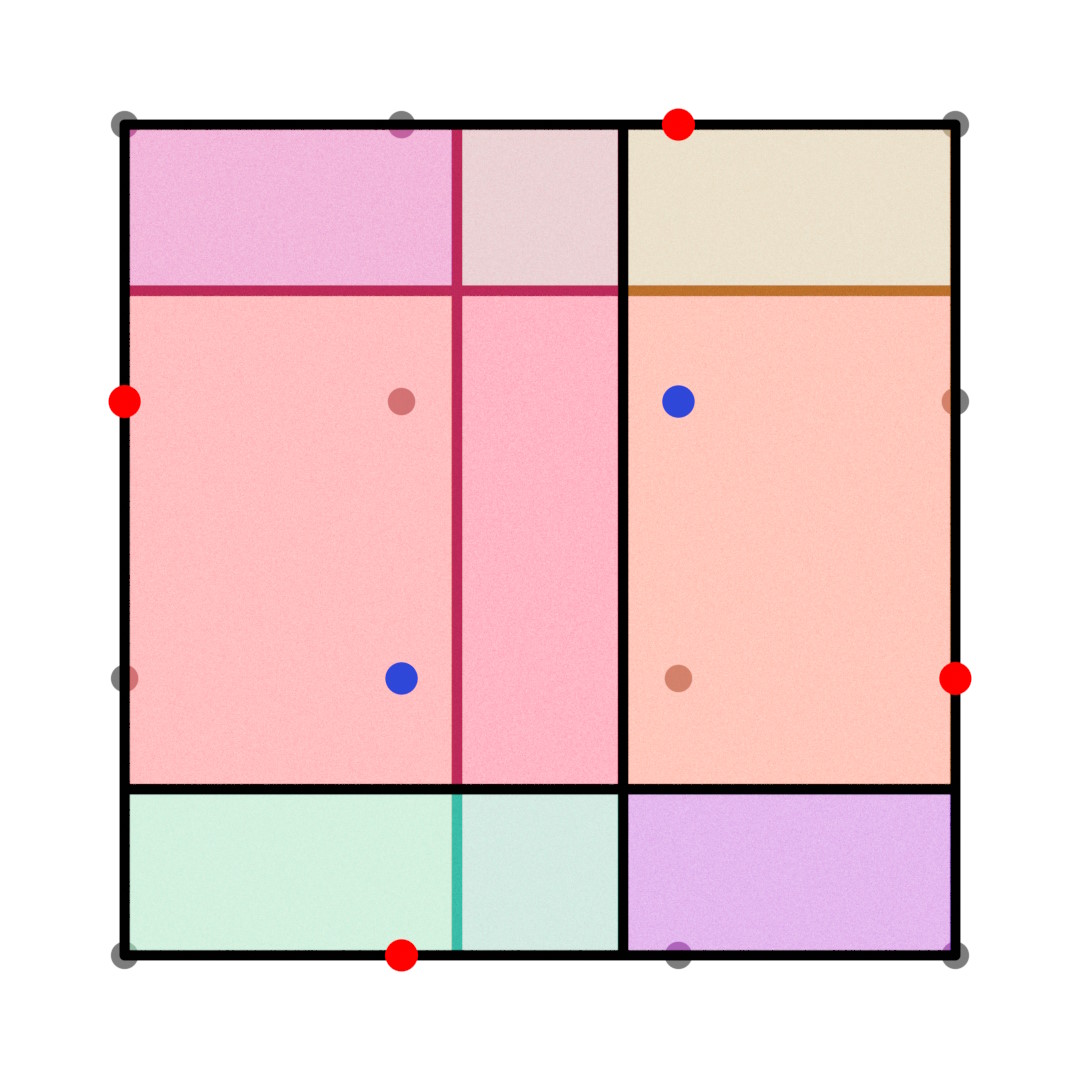}
        \end{minipage}\hfill%
        \begin{minipage}{.34\textwidth}
            \centering%
            \includegraphics[width=.95\textwidth]{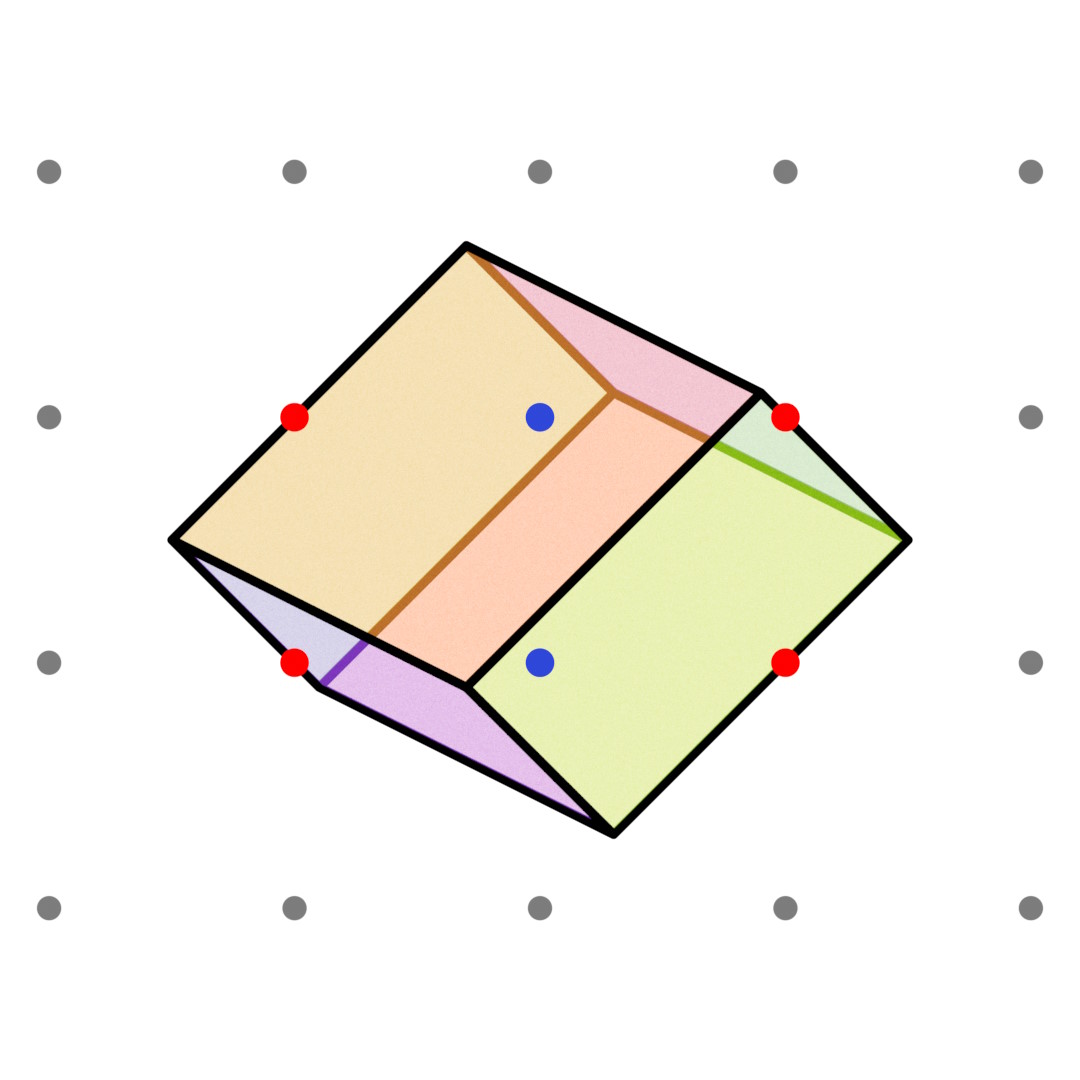}
        \end{minipage}\hfill%
        \begin{minipage}{.28\textwidth}
            \centering%
            \includegraphics[width=.75\textwidth,trim={10cm 0 10cm 0},clip]{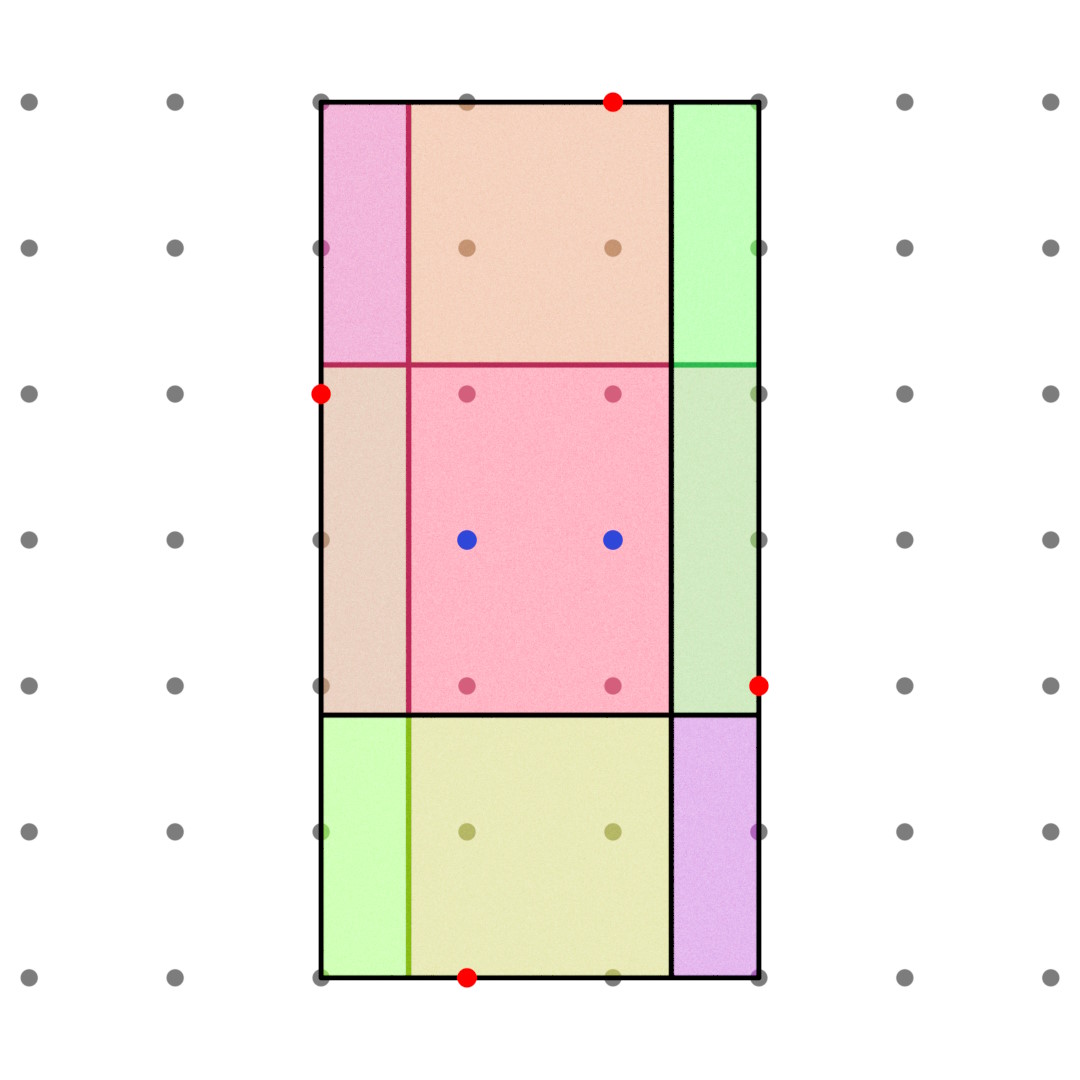}
        \end{minipage}
    \end{center}
    \caption{Projections of the three tight sLR zonotopes scaled down by $\mu = \tfrac{3}{5}$ in the direction of their last covered segments.
        The last covered segments project onto the dots in red, while the isolated last covered points project onto the dots in blue.}
    \label{fig:slrz_tight_last_covered_projections}
\end{figure}

\begin{remark}
\label{rem:last_covered}
Last covered points, considered modulo $\ZZ^3$ in the tiling $\tfrac35 Z + \ZZ^3$, correspond bijectively to
tight starting positions $(s_1,s_2,s_3, s_4)$, considered modulo $\ZZ$.
For example, the fact that the segment of last covered points for $v=(1, 3, 4, 6)$ is parallel to a generator in the zonotope (the generator associated with volume $3$) corresponds to the fact that only the runner with velocity $3$ has some freedom regarding its initial position in Proposition~\ref{prop:tight_1_3_4_6}.

In the cases $v=(1, 2, 3, 4)$ and $v=(1, 3, 4, 7)$ the last covered points also tell us that there is one degree of freedom in the set of tight starting positions, but now this degree of freedom is spread across two runners, which can be nudged
respecting a linear dependence.
\end{remark}

% \subsubsection{Statistical data}
% The count of sLR zonotopes, up to unimodular equivalence, that remain to be checked
% after filtering by different criteria are:
% \begin{enumerate}
%     \item With volume less than 196: 2.133.561
%     \item With volume less than 196, surviving Theorems F and G: 649.525
%     \item The above which do not contain a translated unit cube: 4.620
%     \item The above which do not contain a length 10 vertical segment, nor a unit cube transformed by an order 2 simple transform: 246
%     \item The above which do not contain a unit cube transformed by an order 3 simple transform: 245
%     \item The above which do not contain some specially chosen parallelepipeds of volume two: 244
%     \item The above which do not contain a parallelepiped of covering radius $\le \tfrac{3}{5}$: 121
% \end{enumerate}

% There are 5\,952\,978 sLR zonotopes of volume 1000, up to unimodular equivalence.

\subsection{Certificates and Reproduction}\label{subsec:certificates}
The certificate for our computational proof of Theorem~\ref{thm:sLRZ_195} is
the list of volume vectors from Proposition~\ref{prop:volume_vector_count}, each
accompanied by:
\begin{itemize}
    \item A set of \emph{small} generators for a sLR zonotope, $Z$, with the corresponding volume vector.
    \item A margin $\epsilon$, positive if we want to certify $\mu(P) \le \tfrac35$ and negative if we want to certify $\mu(P) < \tfrac35$.
          The positive ones need to be bounded by $\frac{1}{5D}$, where $D$ is a bound for the covering radius of $Z$
          as obtained from Corollary~\ref{cor:cov_radius_size_bound}, but the negative ones can be arbitrary (compare with Remark~\ref{rem:bound_in_negative_case}).
          All but three of them are negative, as proof that there are only three tight instances.
    \item A dyadic fundamental domain contained within the zonotope
          $(\tfrac{3}{5} + \epsilon) (Z - \cc)$, where $\cc$ is the center of $Z$.
\end{itemize}

Hence, the following steps certify the validity of our certificates:
\begin{enumerate}
    \item Verify that all volume vectors satisfying the conditions from
    Proposition~\ref{prop:volume_vector_count} are enumerated.
    \item For each volume vector:
    \begin{enumerate}
        \item Verify that the zonotope $Z$ has the stated volume vector.
              That is, compute the determinants in Definition~\ref{def:lrz_volume_vector}.
        \item Verify the validity of the margin $\epsilon$.
              That is, for the three positive ones verify that $\epsilon < \frac{1}{5D}$ where $D$ is any denominator bound
              for the covering radius of $Z$ according to Corollary~\ref{cor:cov_radius_size_bound}.
        \item Verify that all the voxels in the fundamental domain are contained in $(\tfrac{3}{5} + \epsilon) (Z - \cc)$.
    \end{enumerate}
\end{enumerate}

A mostly self-contained Python script with minimal dependencies that performs
these suggested steps under exact arithmetic is provided in the associated
\href{https://github.com/endorh/slrc-zonotopes/blob/main/check_certificates.py}{git repository}%
\footnote{\url{https://github.com/endorh/slrc-zonotopes/blob/main/check_certificates.py}}.
We have verified that all our certificates are valid.

\begin{remark}
The attentive reader may have noticed that for the three tight volume vectors our certificates only show $\mu(Z) \le \tfrac35$, but not $\mu(Z)\ge \tfrac35$ (i.e., tightness).
We do not need to certify tightness since: tightness of $(1,2,3,4)$ is obvious (see a proof before Theorem~\ref{thm:tight}), tightness of $(1,3,4,7)$ is proved in~\cite{cusickviewobIII}, and tightness of $(1,3,4,6)$ is proved in Proposition~\ref{prop:tight_1_3_4_6}.

For the non-tight volume vectors, our certificates show $\mu(Z) < \tfrac35$.
\end{remark}

\bibliography{mybib}{}

\begin{thebibliography}{10}

\bibitem{7lrc}
Javier Barajas and Oriol Serra.
\newblock The lonely runner with seven runners.
\newblock {\em Electron. J. Combin.}, 15:\#48, 18 pp. (electronic), 2008.

\bibitem{sLRC}
Matthias Beck, Serkan Ho\c{s}ten, and Matthias Schymura.
\newblock Lonely {R}unner {P}olyhedra.
\newblock {\em Integers}, 19:\#A29, 13 pp, 2019.

\bibitem{CodenottiSantosSchymura}
Giulia Codenotti, Francisco Santos, and Matthias Schymura.
\newblock The covering radius and a discrete surface area for non-hollow
  simplices.
\newblock {\em Discrete \& Computational Geometry}, 67(1):65--111, 2022.

\bibitem{Cslovjecsek2022CovRadiusPolytope}
Jana Cslovjecsek, Romanos~Diogenes Malikiosis, Márton Naszódi, and Matthias
  Schymura.
\newblock Computing the covering radius of a polytope with an application to
  lonely runners.
\newblock {\em Combinatorica}, 42(4):463--490, February 2022.

\bibitem{cusickviewobIII}
Thomas~W. Cusick and C.~Pomerance.
\newblock View-obstruction problems {III}.
\newblock {\em J. Number Theory}, 19:131--139, 1984.

\bibitem{zonorunners}
Matthias Henze and Romanos~Diogenes Malikiosis.
\newblock On the covering radius of lattice zonotopes and its relation to
  view-obstructions and the lonely runner conjecture.
\newblock {\em Aequationes Math.}, 91(2):331--352, 2017.

\bibitem{Huangfu2017}
Q.~Huangfu and J.~A.~J. Hall.
\newblock Parallelizing the dual revised simplex method.
\newblock {\em Mathematical Programming Computation}, 10(1):119--142, December
  2018.

\bibitem{Kannan1992LatTranslatesPolytope}
Ravi Kannan.
\newblock Lattice translates of a polytope and the frobenius problem.
\newblock {\em Combinatorica}, 12(2):161--177, June 1992.

\bibitem{Malikiosis2024LinExpCheckingLRC}
Romanos~Diogenes Malikiosis, Francisco Santos, and Matthias Schymura.
\newblock Linearly-exponential checking is enough for the lonely runner
  conjecture and some of its variants, 2024.
\newblock \url{https://arxiv.org/abs/2411.06903}.

\bibitem{perarnauserra2024thelonely}
Guillem Perarnau and Oriol Serra.
\newblock {The Lonely Runner Conjecture turns 60}, 2024.
\newblock \url{https://arxiv.org/abs/2409.20160}.

\bibitem{Rifford2021sLRCFor3And4Runners}
Ludovic Rifford.
\newblock On the time for a runner to get lonely.
\newblock {\em Acta Appl. Math.}, 180:Paper No. 15, 66, 2022.

\bibitem{willslrc}
J{\"o}rg~M. Wills.
\newblock Zur simultanen homogenen diophantischen {A}pproximation. {I}.
\newblock {\em Monatsh. Math.}, 72:254--263, 1968.

\bibitem{willslrcII}
J{\"o}rg~M. Wills.
\newblock Zur simultanen homogenen diophantischen {A}pproximation. {II}.
\newblock {\em Monatsh. Math.}, 72:368--381, 1968.

\end{thebibliography}
\bibliographystyle{plain}
\end{document}